\definecolor{labelkey}{rgb}{0.6,0,1}
\newcommand{\email}[1]{\href{mailto:#1}{\tt #1}}
\theoremstyle{theorem}
\newtheorem{theorem}{Theorem}
\newtheorem{lemma}[theorem]{Lemma}
\newtheorem{proposition}[theorem]{Proposition}
\theoremstyle{remark}
\newtheorem{remark}[theorem]{Remark}
\theoremstyle{definition}
\newtheorem{definition}[theorem]{Definition}
\newtheorem{example}[theorem]{Example}
\newcommand{\Real}{\mathbb{R}}
\newcommand{\Natural}{\mathbb{N}}
\newcommand{\Id}[1][d]{\matr{I}_{#1}}
\newcommand{\Poly}[1]{\mathbb{P}^{#1}}
\newcommand{\SCAL}{{\cdot}}
\newcommand{\GRAD}{\vec{\nabla}}
\newcommand{\GRADh}{\vec{\nabla}_h}
\newcommand{\DIV}{\vec{\nabla}{\cdot}}
\newcommand{\norm}[2][]{\|#2\|_{#1}}
\newcommand{\seminorm}[2][]{|#2|_{#1}}
\newcommand{\term}{\mathfrak{T}}
\newcommand{\st}{\,:\,}
\newcommand{\Mh}[1][h]{\mathcal{M}_{#1}}
\newcommand{\Th}[1][h]{\mathcal{T}_{#1}}
\newcommand{\Fh}[1][h]{\mathcal{F}_{#1}}
\newcommand{\Fhi}[1][h]{\mathcal{F}_{#1}^{{\rm i}}}
\newcommand{\Fhb}[1][h]{\mathcal{F}_{#1}^{{\rm b}}}
\newcommand{\normal}{\vec{n}}
\newcommand{\aH}{H}
\newcommand{\aL}{L}
\newcommand{\aXh}{X_h}
\newcommand{\aYh}{Y_h}
\newcommand{\aIh}{I_h}
\newcommand{\aJh}{J_h}
\newcommand{\abil}{a}
\newcommand{\abilh}{\abil_h}
\newcommand{\coer}{\gamma}
\newcommand{\alin}{\ell}
\newcommand{\alinh}{\alin_h}
\newcommand{\Cerr}[2]{\mathcal E_{h}(#1;#2)}
\newcommand{\Cerrdual}[2]{\mathcal E^{\rm d}_{h}(#1;#2)}
\newcommand{\diff}[1][]{\matr{K}_{#1}}
\newcommand{\sdiff}[1][]{K_{#1}}
\newcommand{\vel}{\vec{\beta}}
\newcommand{\reac}{\mu}
\newcommand{\jump}[2][F]{[#2]_{#1}}
\newcommand{\wavg}[2][F]{\{#2\}_{\vec{\omega},#1}}
\newcommand{\polydeg}{\mathsf{l}}
\newcommand{\lproj}[2][h]{\pi_{#1}^{0,#2}}
\newcommand{\dproj}[2][T]{\pi_{\diff,#1}^{1,#2}}
\newcommand{\Fl}[1][T,F]{\mathfrak F_{#1}}
\newcommand{\flgen}{\mathfrak{f}}
\newcommand{\disc}{{\mathcal D}}
\begin{document}

\title{A third Strang lemma and an Aubin-Nitsche trick for schemes in fully discrete formulation}

\author{Daniele A. Di Pietro\thanks{Institut Montpelli\'erain Alexander Grothendieck, Univ. Montpellier, CNRS (France), \email{daniele.di-pietro@umontpellier.fr}}
  \and
  J\'er\^ome Droniou\thanks{School of Mathematical Sciences, Monash University, Melbourne (Australia), \email{jerome.droniou@monash.edu}}
}

\maketitle

\begin{abstract}
  In this work, we present an abstract error analysis framework for the approximation of linear partial differential equation (PDE) problems in weak formulation.
  We consider approximation methods in fully discrete formulation, where the discrete and continuous spaces are possibly not embedded in a common space.
  A proper notion of consistency is designed, and, under a classical inf--sup condition, it is
  shown to bound the approximation error. This error estimate result is in the spirit
  of Strang's first and second lemmas, but applicable in situations not covered by these lemmas (because of a fully discrete approximation space). An improved estimate is also 
  established in a weaker norm, using the Aubin--Nitsche trick.

  We then apply these abstract estimates to an anisotropic heterogeneous diffusion model
  and two classical families of schemes for this model: Virtual Element and Finite Volume methods.
  For each of these methods, we show that the abstract results yield new error estimates with a precise and mild dependency on the local anisotropy ratio.
  A key intermediate step to derive such estimates for Virtual Element Methods is proving optimal approximation properties of the oblique elliptic projector in weighted Sobolev seminorms.
  This is a result whose interest goes beyond the specific model and methods considered here.
  We also obtain, to our knowledge, the first clear notion of consistency for Finite Volume methods, which leads to a generic error estimate involving the fluxes and valid for a wide range of Finite Volume schemes. An important application is the first error estimate for Multi-Point Flux Approximation L and G methods.
  
  \begin{taggedblock}{dg}%
    In the appendix, not included in the published version of this work, we show that classical estimates for discontinuous Galerkin methods can be obtained with simplified arguments using the abstract framework.
  \end{taggedblock}
  \medskip\\
  
  \textbf{Key words.} Strang lemma,
  Consistency,
  Error estimate,
  Aubin-Nitsche trick,
  Virtual Element Methods,
  \tagged{dg}{Discontinuous Galerkin,}
  Finite Volume methods,
  oblique elliptic projector.
  \medskip\\
  \textbf{AMS subject classification.} 65N08, 65N12, 65N15, 65N30.

\end{abstract}



\section{Introduction}

The second Strang lemma \cite{Strang:72,Strang.Fix:08} is probably the most famous error estimate result for Finite Element Methods, and is used as a starting point for the analysis of non-conforming methods in many reference textbooks; see, e.g., \cite{Ern.Guermond:04,Ciarlet:02}.
In recent years, it has been generalised to novel technologies including, e.g., Discontinuous Galerkin (DG) \cite[Section 1.3]{Di-Pietro.Ern:12} and Virtual Element methods (VEM) \cite[Theorem 2]{Cangiani.Manzini.ea:17}.
In a nutshell, given Hilbert spaces $V$ and $V_h$, a bilinear form $a(\cdot,\cdot)$ (resp. $\abilh(\cdot,\cdot)$) and a linear form $\ell(\cdot)$ (resp. $\alinh(\cdot)$) defined on $V$ (resp. $V_h$), and considering the continuous and discrete problems
\[
\mbox{Find $u\in V$ such that}\quad a(u,v)=\ell(v)\quad \forall v\in V
\]
and
\begin{equation}\label{strang.disc}
\mbox{Find $u_h\in V_h$ such that}\quad \abilh(u_h,v_h)=\alinh(v_h)\quad \forall v_h\in V_h,
\end{equation}
the second Strang lemma provides, under boundedness and inf--sup conditions on $\abilh$, a bound on a proper norm of $u-u_h$ in terms of quantities measuring the approximation properties of $V$ by $V_h$ and the consistency of the discrete problem.

This result has two major constraints: 
\begin{enumerate}[(i)]
\item $V$ and $V_h$ must be subspaces of a common space of functions, to ensure that the sum $V+V_h$ is well defined,
\item $a_h(\cdot,\cdot)$ must be extended to $V+V_h$, such that its restriction to $V$ is consistent with $a(\cdot,\cdot)$ in an appropriate way.
\end{enumerate}
The first constraint is not an issue for Finite Element, DG methods or VEM, whose natural unknowns are functions, but it is not satisfied by a number of other methods such as, e.g., Hybrid High-Order \cite{Di-Pietro.Ern.ea:14}, Mimetic Finite Differences \cite{Beirao-da-Veiga.Lipnikov.ea:14},  cell- and face-centred Finite Volume methods (such as Hybrid Mimetic Mixed methods \cite{Droniou.Eymard:06,Eymard.Gallouet.ea:10,Droniou.Eymard.ea:10}). Indeed, in some of these methods, even though certain components of vectors in $V_h$ represent functions on the mesh cells, other components can represent unknowns/functions on the mesh faces, at the mesh vertices, etc.

Even for methods that clearly satisfy (i), the second constraint can raise some challenges. For example, in DG methods, the extension of $a_h$ can often be made only in $V_*+V_h$, where $V_*$ is a strict subspace of $V$; see, e.g., \cite[Section 1.3.3]{Di-Pietro.Ern:12}.
Possible ways of circumventing the difficulties linked to the insufficient regularity of the exact solution have been proposed, e.g., in \cite{Gudi:10} (trimmed error estimates) and, more recently, in \cite{Ern.Guermond:18} (mollified error estimates).
Other difficulties may be inherent to the approach used to construct the discretisation.
In VEM, the discrete bilinear form $\abilh(\cdot,\cdot)$ often contains contributions defined in an algebraic way that make its extension to $V$ not obvious; the Strang-like estimate of \cite{Cangiani.Manzini.ea:17} circumvents this question of extension of $\abilh(\cdot,\cdot)$, at the expense of additional terms, by extending instead the continuous form $a(\cdot,\cdot)$ to $V+V_h$.
Another example can be found in the family of cell-centered Finite Volume methods \cite{Edwards.Rogers:94,Eymard.Gallouet.ea:00,Aavatsmark.Eigestad.ea:08,Droniou:14}: even if the unknowns can be considered, in these methods, as piecewise constant functions on the mesh, and their formulation can be written as \eqref{strang.disc}, the resulting bilinear form $\abilh(\cdot,\cdot)$ is written in a fully discrete form that makes its extension to a space of functions, and the subsequent analysis, more involved.

\smallskip

In this work, we propose a new error analysis framework, for problems written in weak (Petrov--Galerkin) form, that is free from the two constraints mentioned above. The main idea is to estimate a \emph{discrete} approximation error $\aIh u-u_h$, where $\aIh:V\to V_h$ is a well chosen interpolant of functions onto the discrete space.
With this definition of the approximation error, the exact solution $u$ \emph{need not} be plugged into the discrete bilinear form to write the error equation. Instead, its role is played by $\aIh u$.
The discrete approximation error can then be estimated solely in terms of the (discrete) norm in $V_h$ under a \emph{stability} assumption on $\abilh$ (an inf--sup condition) and in terms of only one \emph{consistency} measure involving $\aIh u$, $\abilh$ and $\alinh$; see Theorem \ref{a:th.est.var} below. As a by-product, our analysis provides a clear definition of such a consistency for a wide range of methods, including many for which this notion was never clearly highlighted. The abstract error estimate also enables us to write, in this generic setting, the well-known principle in finite differences:
\begin{equation}\label{cons+stab=conv}
\mbox{stability + consistency $\Longrightarrow$ convergence.}
\end{equation}
In Theorem \ref{th:abstract.weak} below we also establish, under a consistency assumption of the solution to the continuous dual problem, an estimate in a weaker norm than that of $V$, which mimics classical improved error estimates (e.g., in $L^2$ norm when the energy space of the problem is $H^1$) for Finite Elements, DG, etc.
\smallskip

The abstract analysis framework is then used to derive error estimates for a variety of methods for the discretisation of a variable diffusion problem.
  The first application is to conforming and non-conforming VEM, for which we derive an energy error estimate; see Theorem \ref{th:vem.energy} below. This estimate is similar to \cite[Theorem 6.2]{Cangiani.Manzini.ea:17}, but two additional features deserve to be highlighted: it is obtained as a consequence of a general abstract framework, and the dependencies with respect to the diffusion tensor are accurately tracked. In particular, this estimate reveals that the multiplicative constant in the right-hand side is independent of the heterogeneity of the diffusion field, but depends on the square root of the (local) anisotropy ratio, a behaviour already documented for Hybrid High-Order (HHO) methods; see, e.g., \cite{Di-Pietro.Ern:17}.
  A unified $L^2$ error estimate covering both conforming and non-conforming VEM is provided in Theorem \ref{th:vem.L2}.
Establishing the VEM error estimates requires optimal approximation properties of the oblique elliptic projector. These properties, that are also of interest for other high-order methods (e.g.~the HHO method), are the purpose of Section \ref{sec:dproj}; their proof relies on the classical Dupont--Scott approximation theory \cite{Dupont.Scott:80,Brenner.Scott:08}.
  
  In the second application, we consider finite volume (FV) methods, both cell-centred and cell- and face-centred. The notion of consistency for such methods has been discussed in various references (see e.g.~\cite[Section 2.1]{Eymard.Gallouet.ea:00} or \cite[Remark 1.3]{Droniou:14}), but never directly related to error estimates. We show that the abstract analysis framework yields such estimates in terms of the consistency error purely based on the fluxes. As in the case of VEM, this error estimate is established in a diffusion-dependent discrete norm, which enables us to explicitly track the local dependencies with respect to the diffusion tensor. As an important application, we obtain the first error estimate for Multi-Point Flux Approximation L and G methods.
Several papers have tackled the question of designing a uniform convergence analysis framework for finite volume element methods \cite{Mishev:99,Chou.Li:00,Ewing.Lazarov.ea:00,Chatzipantelidis:02}, which are specific forms of finite volume methods on triangles/tetrahedra obtained by writing a balance of fluxes of a conforming or non-conforming $\mathbb{P}_1$ finite element function over a dual mesh. In these references, error estimates are obtained by writing these methods under a Petrov--Galerkin formulation \eqref{a:pro.h}. These estimates do not come from consistency errors of the fluxes but, in \cite{Chatzipantelidis:02} for example, from consistency errors involving $\abil-\abilh$ and $\alin-\alinh$; additionally, they are obtained under a global Lipschitz assumption on the diffusion tensor, and do not track the local dependency on its anisotropy ratio. Estimates in terms of flux consistencies (as in Theorems \ref{th:fv.ener.est} and \ref{th:fv.energy.linex} below) seem natural in the FV setting, and enable us to encompass all finite volume methods, including important practical ones such as MPFA schemes (that are not finite volume element methods).

  \begin{taggedblock}{dg}
    A third application, to DG methods, is considered in Appendix \ref{app:dg}.
    In this context, we show that known error estimates can be recovered with simplified arguments: first, no additional regularity is required on the exact solution in order to express consistency; second, we have a clear notion of the primal-dual consistency required for optimal $L^2$ error estimates.
  \end{taggedblock}

The rest of the paper is organised as follows. In Section \ref{sec:abstract.analysis} we present the abstract analysis framework. The main error estimates are stated in Theorem \ref{a:th.est.var} (energy norm) and \ref{th:abstract.weak} (weaker norm).
Applications of the abstract analysis framework to VEM and FV methods are considered in Section \ref{sec:applications}.
Finally, some conclusions are drawn in Section \ref{sec:conclusion}.


\section{Abstract analysis framework}\label{sec:abstract.analysis}

\subsection{Setting}\label{sec:abstract.setting}

We consider here a setting where the continuous and discrete problems are both written under
variational formulations. For the continuous problem, we take
\begin{itemize}
\item A Hilbert space $\aH$,
\item A continuous bilinear form $\abil:\aH\times\aH\to \Real$,
\item A continuous linear form $\alin:\aH\to \Real$.
\end{itemize}
The problem we aim at approximating is
\begin{equation}\label{a:pro}
\mbox{Find $u\in\aH$ such that}\quad \abil(u,v)=\alin(v)\quad\forall v\in\aH.
\end{equation}
In what follows, problem \eqref{a:pro} is named the \emph{continuous problem} in reference to the fact that the space $\aH$ is usually infinite dimensional.

\begin{remark}[Existence of a continuous solution]
  We assume the existence of a solution to problem \eqref{a:pro}; this existence follows for example from the
Lax--Milgram or Babu\v{s}ka--Brezzi lemmas if $\abil$ is  coercive or satisfies an inf--sup
condition.
\end{remark}

Our approximation is written in fully discrete Petrov--Galerkin form, using trial and test spaces that are possibly different from each other, and not necessarily spaces of functions. In particular, they are not necessarily embedded in any natural space in which $\aH$ is also embedded. We consider thus
\begin{itemize}
\item Two vector spaces $\aXh$ and $\aYh$, with respective norms $\norm[\aXh]{{\cdot}}$ and $\norm[\aYh]{{\cdot}}$.
\item A bilinear form $\abilh:\aXh\times\aYh\to\Real$.
\item A linear form $\alinh:\aYh\to \Real$.
\end{itemize}

\begin{remark}[Discrete spaces]
The spaces $\aXh$ and $\aYh$ are always finite-dimensional in applications, but this is not required in our
analysis.
The index $h$ represents a discretisation parameter (e.g., the meshsize) which characterises these spaces, and such that convergence of the method (in a sense to be made precise) is expected when $h\to 0$.
Likewise, the continuity of $\abilh$ or $\alinh$ is not directly used, but
is always verified in practice, and of course usually required to ensure the existence of a solution.
\end{remark}

The approximation of problem \eqref{a:pro} is
\begin{equation}\label{a:pro.h}
\mbox{Find $u_h\in\aXh$ such that}\quad \abilh(u_h,v_h)=\alinh(v_h)\quad\forall v_h\in\aYh.
\end{equation}
In what follows, \eqref{a:pro.h} is named the \emph{discrete problem}, in reference to the fact that the spaces $\aXh$ and $\aYh$ are usually finite dimensional.
We intend to compare the solutions to \eqref{a:pro} and \eqref{a:pro.h}
by estimating $u_h-\aIh u$, where $\aIh u$ is an element of $\aXh$ representative of the solution
$u$ to \eqref{a:pro}; see Remark \ref{rem:aIh.u}.

\begin{remark}[Equivalent Galerkin formulation]
  When the spaces $\aXh$ and $\aYh$ are finite-dimensional, their dimensions must coincide in order for the discrete problem \eqref{a:pro.h} to be well-posed.
  In this case, there exists an isomorphism $\mathfrak{I}_h:\aXh\to\aYh$, and an equivalent Galerkin formulation can be written based on the linear and bilinear forms $\widetilde{\ell}_h:\aXh\to\Real$ and $\widetilde{a}_h:\aXh\times\aXh\to\Real$ such that $\widetilde{\ell}_h(v_h)=\alinh(\mathfrak{I}_hv_h)$ and $\widetilde{a}_h(u_h,v_h)=\abilh(u_h,\mathfrak{I}_hv_h)$ for all $u_h,v_h\in \aXh$.
\end{remark}

\subsection{Error estimate in energy norm}

We now describe a notion of stability of $\abilh$ that yields a bound on the solutions
to \eqref{a:pro.h}

\begin{definition}[Inf--sup stability]\label{def:infsup.stable}
The bilinear form $\abilh$ is inf--sup stable for $(\norm[\aXh]{{\cdot}},\norm[\aYh]{{\cdot}})$ if
\begin{equation}\label{a:infsup}
  \exists \coer>0\mbox{ such that }
  \coer\norm[\aXh]{u_h}
  \le\sup_{v_h\in\aYh\backslash\{0\}}\frac{\abilh(u_h,v_h)}{\norm[\aYh]{v_h}}\quad\forall u_h\in\aXh.
\end{equation}
\end{definition}

\begin{remark}[Uniform inf--sup stability]\label{arem:unif.infsup}
In practice, one typically requires that the real number $\coer$ is independent of discretization parameters such as the meshsize. Hence, condition \eqref{a:infsup} should be verified uniformly with respect to $h$. This is needed to have optimal error estimates.
\end{remark}

\begin{remark}[Coercivity]\label{rem:coer} The inf--sup stability is of course satisfied if $\aXh=\aYh$ and $\abilh$ is coercive
in the sense that $\abilh(v_h,v_h)\ge \coer \norm[\aXh]{v_h}^2$ for all $v_h\in \aXh$, where $\coer$
does not depend on $v_h$.
\end{remark}
We next prove an a priori bound on the discrete solution. To this end, we recall that, if $Z$ is a Banach space with norm $\norm[Z]{{\cdot}}$, the dual norm of a linear form $\mu:Z\to\Real$ is classically defined by
\begin{equation}\label{a:dual.norm}
\norm[Z^\star]{\mu}=\sup_{z\in Z\backslash\{0\}}\frac{|\mu(z)|}{\norm[Z]{z}}.
\end{equation}

\begin{proposition}[A priori bound on the discrete solution]\label{prop:stab.var}
If $\abilh$ is inf--sup stable in the sense of Definition \ref{def:infsup.stable},
$m_h:\aYh\to\Real$ is linear, and $w_h$ satisfies
$$
\abilh(w_h,v_h)= m_h(v_h)\quad\forall v_h\in\aYh,
$$
then
\[
\norm[\aXh]{w_h}\le \coer^{-1}\norm[\aYh^\star]{m_h}.
\]
\end{proposition}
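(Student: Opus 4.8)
The plan is to apply the inf--sup stability condition \eqref{a:infsup} directly with the trial element $u_h=w_h$, and then replace $\abilh(w_h,\cdot)$ by $m_h(\cdot)$ using the equation satisfied by $w_h$. Concretely, Definition \ref{def:infsup.stable} applied to $w_h\in\aXh$ gives
\[
\coer\norm[\aXh]{w_h}\le\sup_{v_h\in\aYh\backslash\{0\}}\frac{\abilh(w_h,v_h)}{\norm[\aYh]{v_h}}
=\sup_{v_h\in\aYh\backslash\{0\}}\frac{m_h(v_h)}{\norm[\aYh]{v_h}},
\]
where the equality follows from the assumption $\abilh(w_h,v_h)=m_h(v_h)$ for all $v_h\in\aYh$.

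It then remains to identify the right-hand side with the dual norm of $m_h$. Since $\aYh$ is a vector space, for any $v_h\neq 0$ both $v_h$ and $-v_h$ are admissible in the supremum and $m_h(-v_h)=-m_h(v_h)$; hence $\sup_{v_h\in\aYh\backslash\{0\}}\frac{m_h(v_h)}{\norm[\aYh]{v_h}}=\sup_{v_h\in\aYh\backslash\{0\}}\frac{|m_h(v_h)|}{\norm[\aYh]{v_h}}=\norm[\aYh^\star]{m_h}$ by the definition \eqref{a:dual.norm} of the dual norm. Dividing through by $\coer>0$ yields the claimed bound $\norm[\aXh]{w_h}\le\coer^{-1}\norm[\aYh^\star]{m_h}$.

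This argument is essentially immediate, so I do not expect a real obstacle; the only point worth a word is the passage from $m_h(v_h)$ to $|m_h(v_h)|$ inside the supremum, which is harmless as just explained. Note also that neither existence nor uniqueness of $w_h$ is used: the estimate is an a priori bound valid for any $w_h\in\aXh$ satisfying the stated relation, and in particular it applies to the discrete solution $u_h$ of \eqref{a:pro.h} with $m_h=\alinh$.
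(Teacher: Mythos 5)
Your proof is correct and follows essentially the same route as the paper: apply the inf--sup condition to $w_h$, substitute $m_h$ for $\abilh(w_h,\cdot)$ via the hypothesis, and bound the resulting supremum by the dual norm. The only cosmetic difference is the order of the steps (the paper bounds each quotient by $\norm[\aYh^\star]{m_h}$ first and then takes the supremum), and your remark on the sign inside the supremum is a harmless extra detail.
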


\begin{proof}
Take $v_h\in \aYh\backslash\{0\}$ and write, by definition of $\norm[\aYh^\star]{{\cdot}}$,
\[
\frac{\abilh(w_h,v_h)}{\norm[\aYh]{v_h}}=\frac{m_h(v_h)}{\norm[\aYh]{v_h}}\le \norm[\aYh^\star]{m_h}.
\]
The proof is completed by taking the supremum over such $v_h$ and using \eqref{a:infsup}.
\end{proof}

We then define the key notion of consistency which, in combination with the inf--sup stability,
provides the estimate on $u_h-\aIh u$ in the $\aXh$ norm.

\begin{definition}[Consistency error and consistency]\label{a:cons.error}
Let $u$ be the solution to the continuous problem \eqref{a:pro} and take
$\aIh u\in\aXh$.
The \emph{variational consistency error} is the linear form $\Cerr{u}{\cdot}:\aYh\to \Real$ defined by
\begin{equation}\label{var.cons}
\Cerr{u}{\cdot}=\alinh(\cdot) - \abilh(\aIh u,\cdot).
\end{equation}
Let now a family $(\aXh,\abilh,\alinh)_{h\to 0}$ of spaces and forms be given, and consider the corresponding family of discrete problems \eqref{a:pro.h}.
We say that \emph{consistency} holds if
$$
\text{$\norm[\aYh^\star]{\Cerr{u}{\cdot}}\to 0$ as $h\to 0$.}
$$
\end{definition}

\begin{remark}[Choice of $\aIh u$]\label{rem:aIh.u}
No particular property is required here on $\aIh u$; it could actually
be any element of $\aXh$. However, for the estimates that follow to be meaningful, it is
expected that $\aIh u$ is computed from $u$, not necessarily in a linear way but such
that information on $\aIh u$ encodes meaningful information on $u$ itself.
\end{remark}

The first main result of the paper, an estimate on $\norm[\aXh]{u_h-\aIh u}$, is stated in the following theorem.
As explained in the introduction, this theorem can be considered as a ``third Strang lemma''.
In passing, it also shows that \eqref{cons+stab=conv} holds.

\begin{theorem}[Abstract error estimate and convergence in energy norm]\label{a:th.est.var}
Assume that $\abilh$ is inf--sup stable in the sense of Definition \ref{def:infsup.stable}. Let $u$ be a solution to \eqref{a:pro}, $\aIh u\in \aXh$, and
recall the definition \eqref{var.cons} of the variational consistency error $\Cerr{u}{\cdot}$.
If $u_h$ is a solution to \eqref{a:pro.h} then
\begin{equation}\label{energy.est}
\norm[\aXh]{u_h-\aIh u}\le \coer^{-1}\norm[\aYh^\star]{\Cerr{u}{\cdot}}.
\end{equation}
As a consequence, letting a family $(\aXh,\abilh,\alinh)_{h\to 0}$ of spaces and forms be given, if consistency holds and $\coer$ does not depend on $h$, then we have \emph{convergence} in the following sense:
$$
\text{$\norm[\aXh]{u_h-\aIh u}\to 0$ as $h\to 0$.}
$$
\end{theorem}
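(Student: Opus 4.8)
The plan is to derive \eqref{energy.est} directly from the two ingredients already set up: the inf--sup stability of $\abilh$ (Definition \ref{def:infsup.stable}) and the a priori bound of Proposition \ref{prop:stab.var}, applied to the \emph{discrete error} $w_h := u_h - \aIh u \in \aXh$. First I would write down the error equation: for all $v_h \in \aYh$,
\[
\abilh(u_h - \aIh u, v_h) = \abilh(u_h,v_h) - \abilh(\aIh u, v_h) = \alinh(v_h) - \abilh(\aIh u, v_h) = \Cerr{u}{v_h},
\]
where the middle equality uses that $u_h$ solves the discrete problem \eqref{a:pro.h} and the last one is just the definition \eqref{var.cons} of the variational consistency error. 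Note that $\Cerr{u}{\cdot}$ is linear on $\aYh$ (it is a difference of the linear form $\alinh$ and the linear form $v_h \mapsto \abilh(\aIh u, v_h)$), so Proposition \ref{prop:stab.var} applies with $m_h = \Cerr{u}{\cdot}$ and $w_h = u_h - \aIh u$, immediately yielding
\[
\norm[\aXh]{u_h - \aIh u} \le \coer^{-1} \norm[\aYh^\star]{\Cerr{u}{\cdot}},
\]
which is \eqref{energy.est}. (Equivalently, one can bypass the proposition and argue inline: divide the error equation by $\norm[\aYh]{v_h}$, bound the right-hand side by $\norm[\aYh^\star]{\Cerr{u}{\cdot}}$ via \eqref{a:dual.norm}, take the supremum over $v_h \in \aYh\setminus\{0\}$, and invoke \eqref{a:infsup}.)

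For the convergence statement, the argument is then purely a limit passage. Assuming $\coer$ is independent of $h$, estimate \eqref{energy.est} gives $\norm[\aXh]{u_h - \aIh u} \le \coer^{-1}\norm[\aYh^\star]{\Cerr{u}{\cdot}}$ for every $h$ in the family; since consistency holds, the right-hand side tends to $0$ as $h \to 0$ by Definition \ref{a:cons.error}, and hence so does $\norm[\aXh]{u_h - \aIh u}$. This also exhibits the ``stability + consistency $\Longrightarrow$ convergence'' principle \eqref{cons+stab=conv}: the inf--sup constant $\coer$ plays the role of the stability constant, $\norm[\aYh^\star]{\Cerr{u}{\cdot}}$ that of the consistency measure.

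I do not anticipate a genuine obstacle here — the result is essentially a bookkeeping reassembly of the lemmas already proved. The only point requiring a moment's care is the $h$-uniformity of $\coer$: \eqref{energy.est} itself holds for each fixed $h$ regardless, but to conclude convergence one must have $\coer$ bounded below away from $0$ along the family (cf. Remark \ref{arem:unif.infsup}), which is precisely the hypothesis stated in the theorem. One should also note that the theorem speaks of ``a solution'' $u_h$ to \eqref{a:pro.h} rather than \emph{the} solution, so no well-posedness of the discrete problem is invoked — the estimate bounds any discrete solution against $\aIh u$ — and likewise $\aIh u$ is an arbitrary element of $\aXh$ (Remark \ref{rem:aIh.u}), so nothing beyond linearity of the forms is used.
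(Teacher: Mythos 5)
Your proof is correct and follows essentially the same route as the paper: derive the error equation $\abilh(u_h-\aIh u,v_h)=\Cerr{u}{v_h}$ from \eqref{a:pro.h} and the definition \eqref{var.cons}, then apply Proposition \ref{prop:stab.var} with $m_h=\Cerr{u}{\cdot}$ and $w_h=u_h-\aIh u$, the convergence claim being an immediate limit passage under $h$-uniformity of $\coer$. No gaps.
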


\begin{proof}
For any $v_h\in\aYh$, the scheme \eqref{a:pro.h} yields
\begin{equation*}
\abilh(u_h-\aIh u,v_h)=\abilh(u_h,v_h)-\abilh(\aIh u,v_h)
	=\alinh(v_h)-\abilh(\aIh u,v_h).
\end{equation*}
Recalling the definition of the consistency error, we then infer that the error $u_h-\aIh u$ can be characterised as the solution to the following \emph{error equation}:
\begin{equation}
\abilh(u_h-\aIh u,v_h)=\Cerr{u}{v_h}\qquad\forall v_h\in \aYh.
\label{a:error.equation}
\end{equation}
The proof is completed by applying Proposition \ref{prop:stab.var} to $m_h=\Cerr{u}{\cdot}$ and $w_h=u_h-\aIh u$.
\end{proof}

\begin{remark}[Quasi-optimality of the error estimate]\label{rem:optimal.error.estimate}
Let 
\[
\norm[\aXh\times\aYh]{\abilh}
\coloneq\sup_{w_h\in X_h\setminus\{0\},v_h\in Y_h\setminus\{0\}}\frac{|\abilh(w_h,v_h)|}{\norm[\aXh]{w_h}\norm[\aYh]{v_h}}
\]
be the standard norm of the bilinear form $\abilh$. The error equation \eqref{a:error.equation} shows that
\[
\norm[\aYh^\star]{\Cerr{u}{\cdot}}\le \norm[\aXh\times\aYh]{\abilh} \norm[\aXh]{u_h-\aIh u}.
\]
Hence, if $\norm[\aXh\times\aYh]{\abilh}$ (and $\coer$, see Remark \ref{arem:unif.infsup})
remains bounded with respect to $h$ as $h\to 0$, which is always the case in practice,
the estimate \eqref{energy.est} is quasi-optimal in the sense that, for some
$C$ not depending on $h$, it holds that
\[
C^{-1}\norm[\aYh^\star]{\Cerr{u}{\cdot}}\le \norm[\aXh]{u_h-\aIh u}\le 
C\norm[\aYh^\star]{\Cerr{u}{\cdot}}.
\]
\end{remark}

\subsection{Improved error estimate in a weaker norm}\label{sec:est.L}

Assume now that $\aH$ is continuously embedded in a Banach space $\aL$, with norm
denoted by $\norm[\aL]{\cdot}$, and that there exists a linear reconstruction operator
\begin{equation}\label{a:recons}
  r_h:\aXh\to \aL.
\end{equation}
If $r_h$ is continuous, with norm bounded above by $C$, then \eqref{energy.est} readily gives
\begin{equation}\label{basic.rh}
\norm[\aL]{r_h(u_h-\aIh u)}\le C \coer^{-1}\norm[\aYh^\star]{\Cerr{u}{\cdot}}.
\end{equation}
Our aim here is to improve this estimate by using an Aubin--Nitsche trick.
To this purpose, we assume that, for all $g\in \aL^\star$ (the space of continuous linear forms $\aL\to \Real$), 
there exists a solution to the continuous dual problem:
\begin{equation}\label{a:pro.dual}
\mbox{Find $z_g\in \aH$ such that}\quad\abil(w,z_g)=g(w)\quad\forall w\in\aH.
\end{equation}

\begin{definition}[Dual consistency error]\label{a.def:dual.cons}
Under Assumption \eqref{a:recons}, let $g\in\aL^\star$,
$z_g$ be a solution to the dual problem \eqref{a:pro.dual}, and $\aJh z_g\in\aYh$.
The dual consistency error of $z_g$ is the linear form $\Cerrdual{z_g}{\cdot}:\aXh \to\Real$ defined by
\begin{equation}\label{var.cons.dual}
	\Cerrdual{z_g}{\cdot}=g\circ r_h - \abilh(\cdot,\aJh z_g).
\end{equation}
\end{definition}

\begin{theorem}[Improved estimate in $\aL$-norm]\label{th:abstract.weak}
Assume \eqref{a:recons} and that the dual problem \eqref{a:pro.dual} has a solution $z_g$ for any $g\in\aL^\star$.
Let $B_{\aL^\star}=\{g\in\aL^{\star}\st\norm[\aL^\star]{g}\le 1\}$ be the unit ball
in $\aL^\star$. Let $u$ and $u_h$ be the solutions to \eqref{a:pro} and \eqref{a:pro.h},
respectively, and take $\aIh u\in \aXh$ and, for $g\in B_{\aL^\star}$, $\aJh z_g\in\aYh$.
Then,
\begin{equation}\label{a:weak.est}
    \norm[\aL]{r_h(u_h-\aIh u)}\\
		\le\norm[\aXh]{u_h-\aIh u}\sup_{g\in B_{\aL^\star}}\norm[\aXh^\star]{\Cerrdual{z_g}{\cdot}}
    {+} {\sup_{g\in B_{\aL^\star}}}\Cerr{u}{\aJh z_g}.
\end{equation}
\end{theorem}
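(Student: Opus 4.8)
The plan is to run the classical Aubin--Nitsche duality argument, the only work being the bookkeeping between the fully discrete forms $\abilh$, $\aIh$, $\aJh$ and the linear form $g\circ r_h$. Throughout, write $e_h$ for the discrete error $u_h-\aIh u\in\aXh$. Since $\aL$ is a Banach space and the ball $B_{\aL^\star}$ is symmetric, the $\aL$-norm admits the dual representation $\norm[\aL]{r_h e_h}=\sup_{g\in B_{\aL^\star}}g(r_h e_h)$; I would take this as the starting point, so that it suffices to bound $g(r_h e_h)$ for a fixed but arbitrary $g\in B_{\aL^\star}$.

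Fix such a $g$. The solvability hypothesis on the dual problem \eqref{a:pro.dual} provides $z_g$, hence $\aJh z_g\in\aYh$; moreover $r_h$ is linear and $g\in\aL^\star$, so $g\circ r_h$ is a well-defined linear form on $\aXh$ and the definition \eqref{var.cons.dual} of the dual consistency error rearranges to
\[
g(r_h e_h)=(g\circ r_h)(e_h)=\abilh(e_h,\aJh z_g)+\Cerrdual{z_g}{e_h}.
\]
The decisive observation is that $\abilh(e_h,\aJh z_g)$ is controlled by the \emph{primal} consistency: since $u_h$ solves \eqref{a:pro.h}, the error $e_h$ satisfies the error equation \eqref{a:error.equation}, and testing it with the admissible test function $v_h=\aJh z_g\in\aYh$ gives $\abilh(e_h,\aJh z_g)=\Cerr{u}{\aJh z_g}$. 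Combining this with the elementary bound $\Cerrdual{z_g}{e_h}\le\norm[\aXh^\star]{\Cerrdual{z_g}{\cdot}}\norm[\aXh]{e_h}$ coming from the dual-norm definition \eqref{a:dual.norm} yields, for every $g\in B_{\aL^\star}$,
\[
g(r_h e_h)\le\Cerr{u}{\aJh z_g}+\norm[\aXh^\star]{\Cerrdual{z_g}{\cdot}}\norm[\aXh]{e_h}.
\]

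To conclude, I would take the supremum over $g\in B_{\aL^\star}$ and use subadditivity of the supremum, pulling the $g$-independent factor $\norm[\aXh]{e_h}$ out of one of the two suprema; this gives exactly \eqref{a:weak.est}. I do not expect a genuine obstacle here: the argument is short, and the only points deserving a line of justification are the symmetry of $B_{\aL^\star}$ (used to write the $\aL$-norm as a supremum of $g(r_h e_h)$ rather than $|g(r_h e_h)|$), the membership $\aJh z_g\in\aYh$ that makes it a legitimate test function in \eqref{a:error.equation}, and the fact that the dual problem \eqref{a:pro.dual} enters only through the definition of $z_g$ — no use of the dual equation itself is needed for this particular estimate, its role being to make $\norm[\aXh^\star]{\Cerrdual{z_g}{\cdot}}$ small in applications.
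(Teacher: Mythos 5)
Your proposal is correct and follows essentially the same route as the paper's proof: the identity $g(r_h e_h)=\Cerrdual{z_g}{e_h}+\abilh(e_h,\aJh z_g)$ from the definition \eqref{var.cons.dual}, the error equation \eqref{a:error.equation} tested with $\aJh z_g$ to replace $\abilh(e_h,\aJh z_g)$ by $\Cerr{u}{\aJh z_g}$, the dual-norm bound on $\Cerrdual{z_g}{e_h}$, and the supremum over $B_{\aL^\star}$ via the Hahn--Banach representation of $\norm[\aL]{\cdot}$. The only (immaterial) difference is that you fix the dual representation of the norm as the starting point whereas the paper derives the pointwise identity first and takes the supremum at the end.
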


\begin{remark}[Primal-dual consistency error]
The quantity $\Cerr{u}{\aJh z_g}=\alinh(\aJh z_g)-\abilh(\aIh u,\aJh z_g)$ is a measure of consistency
of the discrete primal problem \eqref{a:pro.h} that also involves the solution $z_g$ to the continuous dual problem \eqref{a:pro.dual}. For this reason, we will call $\Cerr{u}{\aJh z_g}$ the \emph{primal-dual consistency error}.
\end{remark}

\begin{proof} Let $g\in B_{\aL^\star}$.
  By definition \eqref{var.cons.dual} of $\Cerrdual{z_g}{\cdot}$, it holds for any $w_h\in\aXh$,
\[
	g(r_hw_h)=\Cerrdual{z_g}{w_h}+\abilh(w_h,\aJh z_g).
\]
Letting $w_h=u_h-\aIh u$ and recalling the error equation \eqref{a:error.equation},
this gives
\[
	g(r_h(u_h-\aIh u))=\Cerrdual{z_g}{u_h-\aIh u}+\Cerr{u}{\aJh z_g}.
\]
Taking the supremum over $g\in B_{\aL^\star}$, and recalling that
$\sup_{g\in B_{\aL^\star}}g(w)=\norm[\aL]{w}$ for all $w\in\aL$, we infer
\begin{equation}\label{alternative.est.L}
	\norm[\aL]{r_h(u_h-\aIh u)}\le \sup_{g\in B_{\aL^\star}}\Cerrdual{z_g}{u_h-\aIh u}+
 \sup_{g\in B_{\aL^\star}}\Cerr{u}{\aJh z_g}.
\end{equation}
To conclude, recall the definition \eqref{a:dual.norm} of the dual norm to write
$$
\Cerrdual{z_g}{u_h-\aIh u}\le \norm[\aXh]{u_h-\aIh u}\norm[\aXh^\star]{\Cerrdual{z_g}{\cdot}}.\qedhere
$$
\end{proof}

\begin{remark}[Alternative $\aL$-error bound]
The estimate \eqref{alternative.est.L} appears slightly sharper than \eqref{a:weak.est}.
In the statement of Theorem \ref{th:abstract.weak}, however, we have preferred a formulation which emphasises a general property of the dual consistency error $\Cerrdual{z_g}{\cdot}$ rather than its evaluation at a specific argument; indeed, unlike $\Cerr{u}{\aJh z_g}$ (see Section \ref{sec:rates}), it does not seem possible in general to have a better bound on $\Cerrdual{z_g}{u_h-\aIh u}$ than the one provided by $\norm[\aXh]{u_h-\aIh u}\norm[\aXh^\star]{\Cerrdual{z_g}{\cdot}}$.
\end{remark}

\subsection{Comments}

A few comments are in order.

\subsubsection{Recovering continuous estimates}

For a number of methods, the interpolation operator $\aIh$ is naturally defined as part of the method, and there exists some continuous linear reconstruction operator $R_h:\aXh\to E$, where $E$ is a space of functions (which might or might not be a subspace of $H$).

For example, in conforming FE methods, $\aIh$ is the nodal interpolant and $R_h$ the reconstruction of functions in the FE space from their nodal values. In HHO or non-conforming VEM methods, $\aIh$ corresponds to $L^2$ projections on local (face- and cell-) polynomial spaces, and $R_h$ is a local potential reconstruction related to the elliptic projector; alternatively, in the VEM setting, $R_h v_h$ can give the unique (but not explicitly known) function in the VEM space that has the degrees of freedom encoded in the vector $v_h$.

If the norm of $R_h$ is bounded by $C$, the energy estimate \eqref{energy.est} gives
\[
\norm[E]{R_h u_h - R_h \aIh u}\le C\coer^{-1}\norm[\aYh^\star]{\Cerr{u}{\cdot}}.
\]
The triangle inequality then leads to the following continuous estimate between the reconstructed function $R_h u_h$ and the solution $u$ to the continuous problem:
\[
\norm[E]{R_h u_h - u}\le C\coer^{-1}\norm[\aYh^\star]{\Cerr{u}{\cdot}}+\norm[E]{R_h \aIh u_h-u}.
\]
The last term is usually estimated by means of approximation properties of the space $\aXh$ and of the operators $\aIh$ and $R_h$ attached to the scheme (they do not depend on the continuous equation \eqref{a:pro} or its discretisation \eqref{a:pro.h}). Hence, even though Theorem \ref{a:th.est.var} states an estimate in a purely discrete setting, from this a continuous estimate can often be easily recovered.

\subsubsection{Link between primal, dual and primal-dual consistency errors}\label{sec:link.cons}

If the discrete bilinear form $\abilh$ is symmetric (which requires $\aXh=\aYh$) and $\alinh=\ell\circ r_h$, then the primal and dual consistency errors are identical, and thus estimating $\norm[\aXh^\star]{\Cerrdual{z_g}{\cdot}}$ in \eqref{a:weak.est} does not require any additional work than the one done for estimating $\norm[\aXh^\star]{\Cerr{u}{\cdot}}$ in \eqref{energy.est}.

Even if $\abilh$ is not symmetric or $\alinh\not=\ell\circ r_h$, the dual problem \eqref{a:pro.dual} often has a similar structure as the primal problem, with different parameters; this is expected to be reflected in $\Cerrdual{z_g}{\cdot}$, which might simply be $\Cerr{z_g}{\cdot}$ with different parameters. In this case, the estimate done on the primal consistency error might directly apply, with easy substitutions, to the dual consistency error. For example, the dual problem to the advection--diffusion--reaction model
\begin{equation}\label{pb.adr}
-\DIV(\diff\GRAD u)+\DIV(\vel u)+\reac u=f\mbox{ in $\Omega$,}\qquad u=0\mbox{ on $\partial\Omega$},
\end{equation}
is the same problem with $\vel$ replaced with $-\vel$ and $\reac$ replaced with $\reac+\DIV\vel$.

Estimating $\norm[\aYh^\star]{\Cerr{u}{\cdot}}$ in \eqref{energy.est} requires to estimate $\Cerr{u}{v_h}$ for all $v_h\in \aYh$. Some of the steps performed in this estimate can often be directly used to estimate the primal-dual consistency error $\Cerr{u}{\aJh z}$ in \eqref{a:weak.est}. One simply needs to be cautious and draw on the additional information available in this latter term: the primal consistency error is not tested on an arbitrary $v_h\in \aYh$, but on the specific vector $\aJh z_g$; taking advantage of that specificity can lead to improved rates of convergence (see next section). This idea is illustrated in the proof of Theorem \ref{th:vem.L2} below.

We also notice that, when multiple terms are present as in problem \eqref{pb.adr}, the consistency error involves one component per term, and these components  can be estimated independently.
The benefit is twofold: on the one hand, proceeding this way simplifies the analysis; on the other hand, it makes it possible to re-use the consistency results proved individually for each operator.

\subsubsection{Integer and fractional rates of convergence} \label{sec:rates}

Under some regularity assumptions on the solution $u$, it is possible to obtain rates of convergence for the quantity $\norm[\aYh^\star]{\Cerr{u}{\cdot}}$ that appears in the right-hand side of the energy error estimate \eqref{energy.est}. Typically, for second order elliptic problems, one will assume that $u\in H^r(\Omega)\cap H^1_0(\Omega)$ and establish that
\begin{equation}\label{est:Hr}
\norm[\aYh^\star]{\Cerr{u}{\cdot}}\le C h^{\omega(r)}\norm[H^r(\Omega)]{u},
\end{equation}
where $\omega(r)$ is an appropriate power depending on $r$ and on the considered scheme. This estimate is usually easier to establish for integer $r$, but once this is done it also holds for fractional $r$, by basic interpolation result on the mapping $u\mapsto \Cerr{u}{\cdot}\in\aYh^\star$. The same considerations holds for the dual consistency error.

Let us now examine the improved $L$-error estimate \eqref{a:weak.est} and consider, for example, elliptic problems. Under optimal elliptic regularity assumptions, it is expected that $z_g\in H^2(\Omega)$. This regularity result will translate into a specific rate of convergence of $\norm[\aXh^\star]{\Cerrdual{z_g}{\cdot}}$, say $\mathcal O(h)$. The first term in \eqref{a:weak.est} is then one (or more) orders of magnitude less that $\norm[\aXh]{u_h-\aIh u}$. The regularity of $z_g$ also translates into constraints on the vector $\aJh z_g$, which cannot vary as freely as any $v_h\in \aYh$; because of that, it is expected that the primal-dual consistency error $\Cerr{u}{\aJh z_g}$ is also one or more orders of magnitude less that $\norm[\aYh^\star]{\Cerr{u}{\cdot}}$. Hence, the right-hand side of \eqref{a:weak.est} should converge at a faster rate than the right-hand side of \eqref{energy.est} as $h\to 0$, showing that Theorem \ref{th:abstract.weak} is indeed an improvement over the basic estimate \eqref{basic.rh} coming from Theorem \ref{a:th.est.var}. 

\subsubsection{Range of applications}

Let us explicitly remark that, even though we only consider, for questions of length, second order elliptic problems in Section \ref{sec:applications}, the framework and estimates described in this section cover a wide range of equations and schemes. For example, elliptic equations of order four, such as the ones encountered in the theory of thin plates, also fit into the setting of Section \ref{sec:abstract.setting}. 
Several popular numerical tricks are also covered by the present framework, such as the weak enforcement (\emph{\`a la} Nitsche) of boundary conditions in the discrete formulation \eqref{a:pro.h}.

Finally, we also note that, even though this is the classical example we might have in mind for second order elliptic problems, the space $\aL$ in Section \eqref{sec:est.L} does not need to be $L^2(\Omega)$. It could for example be $H^s(\Omega)$ for some $s\in (0,1)$, leading to optimal rates of convergence in $H^s$ norm instead of $L^2$ norm.


\section{Applications}\label{sec:applications}

In this section we showcase applications of the discrete analysis framework to a variety of numerical methods.

\subsection{Setting}\label{model:setting}

For the sake of simplicity, we focus on a pure diffusion model problem.
Denote by $\Omega\subset\Real^d$, $d\ge 1$, an open bounded connected polytopal domain with boundary $\partial\Omega$.
In what follows, for any measured set $X$, we denote by $({\cdot},{\cdot})_X$ the usual inner product of $L^2(X)$ or $L^2(X)^d$ according to the context, by $\|{\cdot}\|_X$ the corresponding norm, and we adopt the convention that the subscript is omitted whenever $X=\Omega$.

Let $\diff:\Omega\to\Real^{d\times d}$ denote a symmetric, uniformly elliptic diffusion field, which we additionally assume piecewise constant on a finite partition $P_\Omega=\{\Omega_i\st 1\le i\le N_\Omega\}$ of $\Omega$ into polytopes.
For a given source term $f:\Omega\to\Real$, our model problem reads:
Find $u:\Omega\to\Real$ such that
\begin{equation}\label{eq:strong}
	-\DIV(\diff\GRAD u)=f\quad\mbox{in $\Omega$},\qquad u=0\quad\mbox{on $\partial\Omega$}.
\end{equation}
Assuming $f\in L^2(\Omega)$, a weak formulation of this problem is: Find $u\in H_0^1(\Omega)$ such that
\begin{equation}\label{eq:weak}
  a_{\diff}(u,v)\coloneq(\diff\GRAD u,\GRAD v)=(f,v)\qquad\forall v\in H_0^1(\Omega).
\end{equation}

We denote by $\Mh=(\Th,\Fh)$ a mesh of the domain, where $\Th$ collects the mesh elements, or cells, and $\Fh=\Fhi\cup\Fhb$ the hyperplanar mesh faces, with $\Fhi$ and $\Fhb$ denoting, respectively, the sets of internal and boundary faces.
For any mesh element $T\in\Th$, $h_T$ is the diameter of $T$ and $\Fh[T]$ is the set of faces that lie on its boundary $\partial T$.
Symmetrically, for any mesh face $F\in\Fh$, we denote by $h_F$ the diameter of $F$ and by $\Th[F]$ the set collecting the one (if $F$ is a boundary face) or two (if $F$ is an internal face) mesh elements that share $F$.
For any $T\in\Th$ and any $F\in\Fh[T]$, $\normal_{TF}$ is the unit vector normal to $F$ and pointing out of $T$.

The meshes we consider are always part of a regular family $(\Mh)_{h\in\mathcal H}$ in the sense of \cite[Definition 3.3]{Di-Pietro.Tittarelli:17}. Unless otherwise specified, the notation $a\lesssim b$ means $a\le Cb$ with constant $C$ possibly depending on the regularity factor of that family, but not depending on $\diff$ or $h$ and, for local inequalities, on the mesh element or face. Additional regularity assumptions on the meshes depend on the considered method and will be given when necessary.
We however always assume that the mesh is compliant with the partition $P_\Omega$, i.e., for all $T\in\Th$, there exists a unique $\Omega_i$, $1\le i\le N_\Omega$, such that $T\subset\Omega_i$.
For all $T\in\Th$, we denote by $\diff[T]\coloneq\diff[|T]$ the constant value of $\diff$ inside $T$ and we introduce the local anisotropy ratio
\begin{equation}\label{def:alphaT}
\alpha_T\coloneq\frac{\overline{\lambda}_T}{\underline{\lambda}_T}
\end{equation}
where $\underline{\lambda}_T$ and $\overline{\lambda}_T$ denote, respectively, the smallest and largest eigenvalues of $\diff[T]$.

For a given integer $\polydeg\ge 0$ and $X$ mesh element or face, we denote by $\Poly{\polydeg}(X)$ the space spanned by the restriction to $X$ of $d$-variate polynomials of total degree $\le\polydeg$.
The $L^2$-projector $\lproj[X]{\polydeg}:L^2(X)\to \Poly{\polydeg}(X)$ is defined by: For all $w\in L^2(X)$, $\lproj[X]{\polydeg}w$ is the unique element in $\Poly{\polydeg}(X)$ such that $(\lproj[X]{\polydeg}w,q)_X=(w,q)_X$ for all $q\in\Poly{\polydeg}(X)$.
In the discussion, we will need the following approximation results, which are a special case of \cite[Lemmas 3.4 and 3.6]{Di-Pietro.Droniou:17}: Let an integer $s\in\{0,\ldots,\polydeg+1\}$ be given. Then, for any mesh element $T\in\Th$, any function $v\in H^s(T)$, and any exponent $m\in\{0,\ldots,s\}$, it holds that
\begin{equation}\label{eq:lproj:approx}
  \seminorm[H^m(T)]{v-\lproj[T]{\polydeg}v}\lesssim h_T^{s-m}\seminorm[H^s(T)]{v}.
\end{equation}
Moreover, if $s\ge 1$ and $m\le s-1$,
\begin{equation}\label{eq:lproj:approx.trace}
  \seminorm[{H^m(\Fh[T])}]{v-\lproj[T]{\polydeg}v}\lesssim h_T^{s-m-\frac12}\seminorm[H^s(T)]{v},
\end{equation}
where $H^m(\Fh[T])\coloneq\left\{v\in L^2(\partial T)\st v_{|F}\in H^m(F)\mbox{ for all }F\in\Fh[T]\right\}$ is the broken Sobolev space on $\Fh[T]$ and $\seminorm[{H^m(\Fh[T])}]{{\cdot}}$ the corresponding broken seminorm.

The space of broken polynomials of total degree $\le\polydeg$ on $\Th$ is denoted by $\Poly{\polydeg}(\Th)$, i.e.,
$$
\Poly{\polydeg}(\Th)\coloneq\left\{v\in L^2(\Omega)\st v_{|T}\in\Poly{\polydeg}(T)\quad\forall T\in\Th\right\}.
$$
For a given exponent $s\in\Natural$, we define the broken Sobolev space
$$
H^s(\Th)\coloneq\left\{v\in L^2(\Omega): v_{|T}\in H^s(T)\quad\forall T\in\Th\right\}.
$$
On $H^1(\Th)$ we define the broken gradient operator $\GRADh$ such that, for all $v\in H^1(\Th)$, $(\GRADh v)_{|T}\coloneq\GRAD v_{|T}$.

\subsection{Virtual Element Methods}\label{sec:vem}

The first application we consider is to VEM.
The main novelty of this section is the derivation of a unified energy error estimate for both conforming and non-conforming VEM where the dependence on the diffusion field is accurately tracked.
Our results show full robustness with respect to the heterogeneity of the diffusion field, and a mild dependence on the square root of the local anisotropy ratio.

\subsubsection{The oblique elliptic projector}\label{sec:dproj}

Like several other arbitrary-order discretisation methods for problem \eqref{eq:weak} (such as HHO \cite{Di-Pietro.Ern.ea:14}, Weak Galerkin \cite{Wang.Ye:13} methods, and Mimetic Finite Differences \cite{Lipnikov.Manzini:14}), VEM are based on local projectors which possibly embed a dependence on the diffusion field inside $T$. Let $k\ge 1$ be a natural number. Fixing $T\in\Th$, we focus here on VEM formulations based on the (oblique) elliptic projector $\dproj{k}:H^1(T)\to \Poly{k}(T)$  defined by: For $v\in H^1(T)$,
\begin{subequations}\label{def:dproj}
\begin{align}
\label{def:dproj.1}
(\diff[T]\GRAD\dproj{k}v,\GRAD w)_T={}&(\diff[T]\GRAD v,\GRAD w)_T\quad\forall w\in\Poly{k}(T),\\
\label{def:dproj.2}
\int_T \dproj{k}v={}&\int_T v.
\end{align}
\end{subequations}
By the Riesz representation theorem in $\GRAD\Poly{k}(T)$, \eqref{def:dproj.1} defines a unique
element of $\GRAD \Poly{k}(T)$, and the closure equation \eqref{def:dproj.2}
fixes the corresponding $\dproj{k}v\in\Poly{k}(T)$.
It can be easily checked that $\dproj{k}$ is a projector, i.e., it is linear and idempotent.
As a result, it maps polynomials of total degree $\le k$ onto themselves.
Optimal approximation properties for $\dproj{k}$ in diffusion-dependent seminorms are studied in the following theorem, where the dependence of the multiplicative constants on the local diffusion tensor $\diff[T]$ is carefully tracked.
\begin{theorem}[Approximation properties of the oblique elliptic projector in diffusion-weighted seminorms]\label{thm:approx.dproj}
  Assume the setting described in Section \ref{model:setting}.
  For a given polynomial degree $k\ge0$, let an integer $s\in\{1,\ldots,k+1\}$ be given.
  Then, recalling the definition \eqref{def:dproj} of the oblique elliptic projector, for all $v\in H^{s}(T)$ and all $m\in \{0,\ldots,s-1\}$,
  \begin{equation}\label{dproj:approx}
    \seminorm[H^m(T)^d]{\diff[T]^{\frac12}\GRAD(v-\dproj{k}v)}
    \lesssim\overline{\lambda}_T^{\frac12} h_T^{s-m-1}\seminorm[H^{s}(T)]{v}.
  \end{equation}
  If, additionally, $m\le s-2$ (which enforces $s\ge 2$), then
  \begin{equation}\label{dproj:trace}
    h_T^{\frac12}\seminorm[{H^m(\Fh[T])^d}]{\diff[T]^{\frac12}\GRAD(v-\dproj{k}v)}
    \lesssim\overline{\lambda}_T^{\frac12} h_T^{s-m-1}\seminorm[H^{s}(T)]{v},
  \end{equation}
  where $H^m(\Fh[T])^d$ is the broken Sobolev space on $\Fh[T]$ defined component-wise as in \eqref{eq:lproj:approx.trace}, and
  $\seminorm[{H^m(\Fh[T])^d}]{{\cdot}}$ is the corresponding seminorm.
\end{theorem}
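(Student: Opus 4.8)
I would deduce both estimates from the elementary \emph{best-approximation} property of the oblique elliptic projector, combined with the approximation properties \eqref{eq:lproj:approx} of the $L^2$-projector and with the standard polynomial inverse and multiplicative trace inequalities on shape-regular mesh elements.

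\textbf{Step 1: the case $m=0$.} By \eqref{def:dproj.1}, $(\diff[T]\GRAD(v-\dproj{k}v),\GRAD w)_T=0$ for all $w\in\Poly{k}(T)$. Taking $w=\dproj{k}v-q$ for arbitrary $q\in\Poly{k}(T)$ and expanding $\norm[T]{\diff[T]^{\frac12}\GRAD(v-q)}^2$ by the Pythagorean identity for the (genuine, since $\diff[T]$ is symmetric positive definite) inner product $(\diff[T]\,\cdot\,,\cdot\,)_T$ on $\GRAD\Poly{k}(T)$ shows that $\dproj{k}v$ minimises $q\mapsto\norm[T]{\diff[T]^{\frac12}\GRAD(v-q)}$ over $\Poly{k}(T)$. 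Choosing $q=\lproj[T]{k}v$, using the pointwise bound $\norm[T]{\diff[T]^{\frac12}\GRAD\phi}\le\overline{\lambda}_T^{\frac12}\norm[T]{\GRAD\phi}$ (valid since the largest eigenvalue of $\diff[T]$ is $\overline{\lambda}_T$), and invoking \eqref{eq:lproj:approx} at order $1$ (licit since $s\ge1$) then gives $\norm[T]{\diff[T]^{\frac12}\GRAD(v-\dproj{k}v)}\lesssim\overline{\lambda}_T^{\frac12}h_T^{s-1}\seminorm[H^s(T)]{v}$, i.e.\ \eqref{dproj:approx} for $m=0$.

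\textbf{Step 2: general $m\in\{0,\dots,s-1\}$.} Split $v-\dproj{k}v=(v-\lproj[T]{k}v)+p$, where $p=\lproj[T]{k}v-\dproj{k}v\in\Poly{k}(T)$. For the first term, pulling the constant matrix outside the seminorm and using \eqref{eq:lproj:approx} at order $m+1\le s$ gives $\seminorm[H^m(T)^d]{\diff[T]^{\frac12}\GRAD(v-\lproj[T]{k}v)}\le\overline{\lambda}_T^{\frac12}\seminorm[H^m(T)^d]{\GRAD(v-\lproj[T]{k}v)}\lesssim\overline{\lambda}_T^{\frac12}h_T^{s-m-1}\seminorm[H^s(T)]{v}$. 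The point that keeps the constants \emph{independent of the anisotropy ratio} $\alpha_T$ is that $\diff[T]^{\frac12}\GRAD p$ is again a vector-valued polynomial of degree $\le k-1$ on $T$, so the standard polynomial inverse inequality applies to it verbatim, with a constant depending only on $k$ and the mesh regularity: $\seminorm[H^m(T)^d]{\diff[T]^{\frac12}\GRAD p}\lesssim h_T^{-m}\norm[T]{\diff[T]^{\frac12}\GRAD p}$. Since $\norm[T]{\diff[T]^{\frac12}\GRAD p}\le\norm[T]{\diff[T]^{\frac12}\GRAD(v-\lproj[T]{k}v)}+\norm[T]{\diff[T]^{\frac12}\GRAD(v-\dproj{k}v)}\lesssim\overline{\lambda}_T^{\frac12}h_T^{s-1}\seminorm[H^s(T)]{v}$ by Step 1 (the first summand through \eqref{eq:lproj:approx}, the second being Step 1 itself), collecting the two contributions proves \eqref{dproj:approx}.

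\textbf{Step 3: trace estimate, and the main difficulty.} Assume now $m\le s-2$. Apply the multiplicative trace inequality $\norm[\partial T]{\psi}^2\lesssim h_T^{-1}\norm[T]{\psi}^2+h_T\norm[T]{\GRAD\psi}^2$ to $\psi=\partial^\beta$ of each scalar component of $\diff[T]^{\frac12}\GRAD(v-\dproj{k}v)$, for every multi-index $\beta$ with $|\beta|=m$, and sum; this is licit because $m+2\le s$ ensures $v-\dproj{k}v\in H^{m+2}(T)$, so the integrand lies in $H^{m+1}(T)^d$. Multiplying by $h_T$ yields $h_T\,\seminorm[{H^m(\Fh[T])^d}]{\diff[T]^{\frac12}\GRAD(v-\dproj{k}v)}^2\lesssim\seminorm[H^m(T)^d]{\diff[T]^{\frac12}\GRAD(v-\dproj{k}v)}^2+h_T^2\,\seminorm[H^{m+1}(T)^d]{\diff[T]^{\frac12}\GRAD(v-\dproj{k}v)}^2$, and invoking \eqref{dproj:approx} at orders $m\le s-1$ and $m+1\le s-1$ (the latter being exactly the hypothesis $m\le s-2$) bounds the right-hand side by $\overline{\lambda}_T\,h_T^{2(s-m-1)}\seminorm[H^s(T)]{v}^2$; taking square roots gives \eqref{dproj:trace}. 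I expect no individual estimate to be hard; the delicate point is the bookkeeping that confines the whole $\diff[T]$-dependence to the single harmless factor $\overline{\lambda}_T^{\frac12}$. This is achieved by never separating $\diff[T]^{\frac12}$ from the gradient — one carries the weighted quantity $\diff[T]^{\frac12}\GRAD(\cdot)$ unchanged through the best-approximation argument, through the polynomial inverse inequality (crucially using that $\diff[T]^{\frac12}$ times a polynomial is a polynomial of the same degree), and through the trace inequality — so that the shape-regularity constants never involve $\underline{\lambda}_T$ and $\alpha_T$ does not enter the bounds.
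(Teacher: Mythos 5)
Your proof is correct, and it reaches both estimates by a route that is structurally parallel to, but not identical with, the paper's. The paper decomposes $v=Q^{s}v+R^{s}v$ using the averaged Taylor polynomial of degree $s-1$ and the Dupont--Scott remainder bounds $\seminorm[H^r(T)]{R^{s}v}\lesssim h_T^{s-r}\seminorm[H^s(T)]{v}$; it then writes $v-\dproj{k}v=R^{s}v-\dproj{k}(R^{s}v)$ and handles the polynomial piece $\dproj{k}(R^{s}v)$ with an inverse inequality followed by the weighted boundedness $\norm[T]{\diff[T]^{\frac12}\GRAD\dproj{k}\phi}\le\norm[T]{\diff[T]^{\frac12}\GRAD\phi}$. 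You instead split $v-\dproj{k}v=(v-\lproj[T]{k}v)+(\lproj[T]{k}v-\dproj{k}v)$ and control the polynomial gap via the best-approximation property of $\dproj{k}$ with competitor $\lproj[T]{k}v$ --- which is just the Pythagorean restatement of the same boundedness \eqref{dproj:approx:3} --- so the two arguments rest on exactly the same pair of structural facts: weighted $L^2$-stability of the oblique projector, and an inverse inequality applied to the \emph{weighted} polynomial gradient so that only $\overline{\lambda}_T^{\frac12}$ (never $\underline{\lambda}_T$ or $\alpha_T$) enters. What your version buys is economy of prerequisites: it needs only the already-stated estimates \eqref{eq:lproj:approx} (at orders $1$ and $m+1$) rather than the full family of remainder seminorm bounds from \cite[Lemma 4.3.8]{Brenner.Scott:08}; the paper's version is the one that generalises most directly to a variable $\diff[T]$ as in Remark \ref{approx.eproj:var.diff}, since the Taylor remainder gives control of all intermediate seminorms needed when Leibniz derivatives of $\diff[T]^{1/2}$ appear. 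Your Step 3 (multiplicative trace inequality applied to the $m$-th derivatives, using $m\le s-2$ to guarantee the requisite $H^{m+1}$ regularity of the integrand) is precisely the paper's concluding step, and your closing observation about confining the entire diffusion dependence to the single factor $\overline{\lambda}_T^{\frac12}$ is exactly the point the theorem is designed to make.
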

\begin{proof}
  We consider the following representation of $v$:
  \begin{equation}\label{dproj:approx:1}
    v=Q^{s} v + R^{s} v,
  \end{equation}
  where $Q^{s} v\in\Poly{s-1}(T)\subset\Poly{k}(T)$ is the averaged Taylor polynomial, while the remainder $R^{s} v$ satisfies, for all $r\in\{0,\ldots,s\}$ (cf. \cite[Lemma 4.3.8]{Brenner.Scott:08}),
  \begin{equation}\label{dproj:approx:2}
    \seminorm[H^r(T)]{R^{s} v}\lesssim h_T^{s-r}\seminorm[H^{s}(T)]{v}.
  \end{equation}
  We next notice that, using the definition \eqref{def:dproj} of the oblique elliptic projector, it holds for any $\phi\in H^1(T)$,
  \begin{equation}\label{dproj:approx:3}
    \norm[T]{\diff[T]^{\frac12}\GRAD\dproj{k}\phi}\le\norm[T]{\diff[T]^{\frac12}\GRAD \phi},
  \end{equation}
  as can be inferred selecting $w=\dproj{k}\phi$ as a function test in \eqref{def:dproj.1} and using the Cauchy--Schwarz inequality.
  Taking the projection of \eqref{dproj:approx:1}, and using the fact that $\dproj{k}$ maps polynomials of total degree $\le k$ onto themselves to write $\dproj{k}Q^{s}v=Q^{s}v$, it is inferred that $\dproj{k}v=Q^{s} v + \dproj{k}(R^{s} v)$.
  Subtracting this equation from \eqref{dproj:approx:1}, we obtain $v-\dproj{k}v=R^{s}v - \dproj{k}(R^{s}v)$. Applying the operator $\diff[T]^{1/2}\GRAD$ to this expression, passing to the seminorm, and using the triangle inequality, we arrive at
  \begin{equation}\label{dproj:approx:4}
    \seminorm[H^{m}(T)^d]{\diff[T]^{\frac12}\GRAD(v-\dproj{k}v)}
    \le
    \seminorm[H^{m}(T)^d]{\diff[T]^{\frac12}\GRAD R^{s}v}
    + \seminorm[H^{m}(T)^d]{\diff[T]^{\frac12}\GRAD\dproj{k}(R^{s}v)}
    \eqcolon \term_1 + \term_2.
  \end{equation}
  For the first term, it is readily inferred that
  $\term_1\lesssim\overline{\lambda}_T^{\frac12}\seminorm[H^{m+1}(T)]{R^{s}v}$
  which, combined with \eqref{dproj:approx:2} for $r=m+1$, gives
  \begin{equation}\label{dproj:approx:term1}
    \term_1\lesssim\overline{\lambda}_T^{\frac12} h_T^{s-m-1}\seminorm[H^{s}(T)]{v}.
  \end{equation}
  For the second term, on the other hand, we can proceed as follows:
  \begin{equation}\label{dproj:approx:term2}
    \term_2
    \lesssim h_T^{-m}\norm[T]{\diff[T]^{\frac12}\GRAD(\dproj{k}R^{s}v)}
    \lesssim h_T^{-m}\norm[T]{\diff[T]^{\frac12}\GRAD R^{s}v}
    \lesssim \overline{\lambda}_T^{\frac12} h_T^{-m}\seminorm[H^1(T)]{R^{s}v}
    \lesssim \overline{\lambda}_T^{\frac12} h_T^{s-m-1}\seminorm[H^{s}(T)]{v},
  \end{equation}
  where we have used the local inverse Sobolev embeddings of \cite[Remark A.2]{Di-Pietro.Droniou:17} in the first bound,
  \eqref{dproj:approx:3} with $\phi=R^{s}v$ in the second bound,
  the definition of the $H^1$-seminorm in the third bound,
  and \eqref{dproj:approx:2} with $r=1$ to conclude.
  Plugging \eqref{dproj:approx:term1} and \eqref{dproj:approx:term2} into \eqref{dproj:approx:4}, \eqref{dproj:approx} follows.
  To prove \eqref{dproj:trace}, it suffices to combine \eqref{dproj:approx} with a local continuous trace inequality (see \cite[Lemma 1.49]{Di-Pietro.Ern:12}, where a slightly different notion of face is used which, however, does not affect the final result).
\end{proof}

\begin{remark}[Case of varying {$\diff[T]$}]\label{approx.eproj:var.diff}
Theorem \ref{thm:approx.dproj} also holds for a diffusion that varies inside $T$, provided that $\diff[T]^{1/2}$ belongs to $\Poly{r}(T)^{d\times d}$ for some integer $r$. Under this assumption,
and letting $\overline{\lambda}_T$ be the maximum over $\vec{x}\in T$ of the largest eigenvalue of $\diff[T](\vec{x})$, \eqref{dproj:approx:term1} remains valid owing to \eqref{dproj:approx:2}  and inverse inequalities (that show that $\seminorm[W^{s,\infty}(T)]{\diff[T]^{1/2}}\lesssim h_T^{-s}\norm[L^\infty(T)]{\diff[T]^{1/2}}\lesssim h_T^{-s} \overline{\lambda}_T^{\frac12}$), and the local inverse Sobolev embeddings of \cite[Remark A.2]{Di-Pietro.Droniou:17} can be invoked to establish \eqref{dproj:approx:term2}.
In this case, the hidden multiplicative constants in \eqref{dproj:approx}--\eqref{dproj:trace} additionally depend on $r$.

If $\diff[T]^{1/2}$ is not in $\Poly{r}(T)^{d\times d}$ then, following the proof above, the right-hand sides of \eqref{dproj:approx} and \eqref{dproj:trace} have to be multiplied by $\norm[W^{m,\infty}(T)^{d\times d}]{\diff[T]^{1/2}}$.
\end{remark}

\subsubsection{An abstract Virtual Element Method}

To define a VEM scheme, one first needs to choose a finite dimensional subspace $V^k_h\subset H^1(\Th)$ that locally contains polynomials and satisfies some continuity requirements:
\begin{equation}\label{vem:PkVkT}
  \Poly{k}(T)\subset V^k_T
  \coloneq\left\{(v_h)_{|T}\,:\,v_h\in V^k_h\right\}\quad \forall T\in\Th,
\end{equation}
and, for all $v_h\in V^k_h$,
\begin{equation}\label{vem:cont.vh}
\lproj[F]{k-1}(v_h)_{|T}+\lproj[F]{k-1}(v_h)_{|T'}=0\quad\forall F\in\Fhi\mbox{ with }\Th[F]=\{T,T'\}.
\end{equation}
A subspace $X_h=V^k_{h,0}$ of $V^k_h$ is then considered to account for the homogeneous Dirichlet boundary conditions, and such that (at least) the following condition holds:
For all $v_h\in V_{h,0}^k$,
\begin{equation}\label{vem:bc.vh}
\lproj[F]{k-1}v_h=0\quad\forall F\in\Fhb.
\end{equation}
Different choices of spaces lead to different methods, such as conforming \cite{Beirao-da-Veiga.Brezzi.ea:14} or non-conforming \cite{Ayuso-de-Dios.Lipnikov.ea:16} VEM. Our analysis here does not require a complete description of the space $V_{h,0}^k$. We merely need the two following properties of the interpolant $\aIh:H^1_0(\Omega)\cap C(\overline{\Omega})\to V_{h,0}^k$:
\begin{enumerate}
\item[\textbf{(I1)}] \emph{Locality and boundedness.} For all $T\in\Th$, there is a linear mapping $I_T:H^1(T)\cap C(\overline{T})\to V^k_T$ such that $(\aIh w)_{|T}=I_T(w_{|T})$ for all $w\in H^1_0(\Omega)\cap C(\overline{\Omega})$,
and 
\begin{equation}\label{vem:IT.bound}
\norm[T]{\diff[T]^{\frac12}\GRAD (I_T \phi)}\lesssim \overline{\lambda}_T^{\frac12}\left(\norm[T]{\GRAD \phi}+h_T\seminorm[H^1(T)^d]{\GRAD \phi}\right)
\quad \forall \phi\in H^2(T).
\end{equation}
\item[\textbf{(I2)}] \emph{Preservation of polynomials.} For all $T\in\Th$ and $v\in\Poly{k}(T)$,
$I_T v=v$.
\end{enumerate}

\begin{remark}[Hypotheses \textbf{(I1)} and \textbf{(I2)}]
The preservation of polynomials \textbf{(I2)} is a trivial property of conforming and non-conforming VEM since both local spaces contain polynomials of degree $\le k$. 

By definition of $\overline{\lambda}_T$, Hypothesis \textbf{(I1)} only has to be established in the case $\diff[T]=\Id$. For conforming VEM, in the case of star-shaped mesh elements, this property follows from \cite[Lemma 2.23]{Brenner.Guan.ea:17}. For non-conforming VEM, \textbf{(I1)} with $\diff[T]=\Id$ is a consequence of \cite[Lemma 22]{Di-Pietro.Droniou.ea:18} and \cite[Proposition 7.1]{Di-Pietro.Droniou:17} (in passing, these estimates show that, for non-conforming VEM, \eqref{vem:IT.bound} holds without the term $h_T\seminorm[H^1(T)^d]{\GRAD \phi}$ and assuming only $\phi\in H^1(T)$).
\end{remark}

\begin{remark}[On the DOFs for VEM]
The degrees of freedom (DOFs) of the VEM, that is the unisolvent family of linear forms $(\lambda_i)_{i\in I}$ on $V_{h,0}^k$, must be chosen to enable the computation of the oblique elliptic projector \eqref{def:dproj} for functions in $V_{h,0}^k$. Given \eqref{def:dproj.2}, this means, in particular, that these DOFs should enable the computation, for all $T\in\Th$, of $\lproj[T]{0}$ on $V_{h,0}^k$. In the case $k=1$, we therefore implicitly consider enhanced VEM spaces \cite{Cangiani.Manzini.ea:17}. Otherwise, the closure equation \eqref{def:dproj.2} should be modified, see e.g.~\cite{Brenner.Guan.ea:17}.
\end{remark}

Let $V_{h,0}^k$ be a chosen VEM space, and
\[
l=0\mbox{ if $k=1$},\quad l\in\{0,1\}\mbox{ if $k=2$},\quad l=k-2\mbox{ if $k\ge 3$}.
\]
We assume that the DOFs of $V_{h,0}^k$ enable the computation of $(\lproj[T]{l})_{T\in\Th}$ on $V_{h,0}^k$ (in the cases $k=1$ or $(k,l)=(2,1)$, this supposes using enhanced spaces).
A VEM scheme for \eqref{eq:weak} is then obtained by writing \eqref{a:pro.h} with
\begin{equation}\label{vem:def.lh}
\alinh(v_h)=\sum_{T\in \Th} (f,\lproj[T]{l}v_h)_T\quad\forall v_h\in V_{h,0}^k
\end{equation}
and
\begin{equation}\label{vem:def.ah}
\begin{aligned}
&\abilh(v_h,w_h)=\sum_{T\in\Th} a_T(v_h,w_h)\quad\forall v_h,w_h\in V_{h,0}^k,\\
&\mbox{ where }\quad a_T(v_h,w_h)=(\diff[T]\GRAD\dproj{k}v_h,\GRAD\dproj{k}w_h)_T+s_T\left((I-\dproj{k})v_h,(I-\dproj{k})w_h\right).
\end{aligned}
\end{equation}
Here, $s_T$ is a symmetric positive semi-definite bilinear form on $V_T^k$ (computable from the degrees of freedom) such that
\begin{equation}\label{vem:def.sT}
(\diff[T]\GRAD w,\GRAD w)_T\lesssim s_T(w,w)\lesssim (\diff[T]\GRAD w,\GRAD w)_T
\quad\forall w\in V_T^k\mbox{ such that }\dproj{k}w=0.
\end{equation}
The definition of $\abilh$ implies that if $\abilh(v_h,v_h)=0$ then $v_h$ is constant in each cell, and \eqref{vem:cont.vh}--\eqref{vem:bc.vh} then show that $v_h=0$. Hence, $\abilh$ is symmetric positive definite on $V_{h,0}^k$. The norm considered on $\aXh=V_{h,0}^k$ is the one induced by $\abilh$, that is,
\begin{equation}\label{vem:def.norm}
\norm[\aXh]{v_h}\coloneq\sqrt{\abilh(v_h,v_h)}\quad\forall v_h\in\aXh.
\end{equation}

\subsubsection{Error estimate in energy norm}

With this setting in place, Theorem \ref{a:th.est.var} yields the following estimates.

\begin{theorem}[Energy estimates for VEM schemes]\label{th:vem.energy}
Let $1\le r\le k$ and assume that the solution $u\in H^1_0(\Omega)\cap C(\overline{\Omega})$ to \eqref{eq:weak} belongs to $H^{r+1}(\Th)$.
Let $u_h$ be the solution of the VEM scheme (that is, \eqref{a:pro.h} with the choices \eqref{vem:def.lh} and \eqref{vem:def.ah}). Then the following estimates hold:
\begin{equation}\label{vem:est.energy}
\norm[\aXh]{u_h-\aIh u}\lesssim \left(\sum_{T\in\Th} \alpha_T  \overline{\lambda}_T h_T^{2r}\seminorm[H^{r+1}(T)]{u}^2\right)^{\frac12}
\end{equation}
and
\begin{equation}\label{vem:est.recons}
\norm{\diff^{\frac12}(\GRADh\dproj[h]{k}u_h-\GRAD u)}\lesssim \left(\sum_{T\in\Th} \alpha_T  \overline{\lambda}_T h_T^{2r}\seminorm[H^{r+1}(T)]{u}^2\right)^{\frac12},
\end{equation}
where $\dproj[h]{k}$ is the patched elliptic projector such that, for all $T\in\Th$ and $w\in H^1(\Th)$,
$(\dproj[h]{k}w)_{|T}\coloneq\dproj{k}(w_{|T})$.
\end{theorem}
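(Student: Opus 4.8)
The plan is to apply the abstract energy estimate \eqref{energy.est} of Theorem \ref{a:th.est.var} to the present VEM setting, which reduces everything to bounding the dual norm of the variational consistency error $\Cerr{u}{\cdot}$, and then to pass from \eqref{vem:est.energy} to \eqref{vem:est.recons} by a triangle inequality using the stability and approximation properties of $\dproj{k}$. First I would observe that, since $\abilh$ is symmetric positive definite on $\aXh=V_{h,0}^k$ and the norm $\norm[\aXh]{\cdot}$ is the one induced by $\abilh$, coercivity (hence inf--sup stability) holds with $\coer=1$; thus Theorem \ref{a:th.est.var} gives $\norm[\aXh]{u_h-\aIh u}\le\norm[\aYh^\star]{\Cerr{u}{\cdot}}$ with $\aYh=\aXh$. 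The heart of the proof is therefore to show, for all $v_h\in V_{h,0}^k$,
\[
\Cerr{u}{v_h}=\alinh(v_h)-\abilh(\aIh u,v_h)\lesssim\left(\sum_{T\in\Th}\alpha_T\overline{\lambda}_Th_T^{2r}\seminorm[H^{r+1}(T)]{u}^2\right)^{\frac12}\norm[\aXh]{v_h}.
\]

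To do this I would insert the exact solution's natural local polynomial proxy and telescope. Using \eqref{eq:weak} with a suitable test function built from the $\lproj[T]{l}$'s (and the definitions \eqref{vem:def.lh}--\eqref{vem:def.ah}), rewrite $\Cerr{u}{v_h}$ as a sum over $T\in\Th$ of three types of contributions: (a) a \emph{stabilisation} term $s_T((I-\dproj{k})\aIh u,(I-\dproj{k})v_h)$, (b) a \emph{consistency} term coming from replacing $u$ by $\dproj{k}(u_{|T})$ in the diffusion bilinear form and integrating by parts against $\GRAD\dproj{k}v_h$, and (c) a \emph{source} term $(f,\lproj[T]{l}v_h)_T-(\diff[T]\GRAD u,\GRAD\dproj{k}v_h)_T$-type remainder, which after integration by parts on \eqref{eq:strong} localised to $T$ becomes a boundary/face term involving $(u-\dproj{k}u_{|T})$ and its flux tested against $(v_h-\lproj[F]{k-1}v_h)$ or against $\dproj{k}v_h$; here the continuity conditions \eqref{vem:cont.vh}--\eqref{vem:bc.vh} on $v_h$ are used to insert the $\lproj[F]{k-1}$ projections for free and make the face terms single-valued. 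For (a), \eqref{vem:def.sT} bounds it by $\norm[T]{\diff[T]^{1/2}\GRAD(I-\dproj{k})\aIh u}\,\norm[T]{\diff[T]^{1/2}\GRAD(I-\dproj{k})v_h}$; the second factor is $\le\norm[\aXh]{v_h}$-type, and the first is controlled by writing $(I-\dproj{k})\aIh u=(\aIh u-v)+(v-\dproj{k}v)+\dproj{k}(v-\aIh u)$ for $v=$ a local polynomial approximation of $u$, then using \textbf{(I1)}--\textbf{(I2)}, \eqref{dproj:approx:3}, and the approximation estimate \eqref{dproj:approx} of Theorem \ref{thm:approx.dproj}; this is precisely where the factor $\overline{\lambda}_T^{1/2}h_T^r\seminorm[H^{r+1}(T)]{u}$ and, through \textbf{(I1)}, the extra $\overline{\lambda}_T^{1/2}$ (against an unweighted gradient norm, costing a further $\underline{\lambda}_T^{-1/2}$ hence the $\alpha_T^{1/2}$) enter. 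For (b), \eqref{dproj:approx} directly gives $\overline{\lambda}_T^{1/2}h_T^r\seminorm[H^{r+1}(T)]{u}$. For (c), I would use the trace approximation estimates \eqref{eq:lproj:approx.trace} for $u-\lproj[T]{k}u$ and \eqref{dproj:trace} for the flux of $u-\dproj{k}u$, combined with a discrete trace inequality for $v_h$ (valid on $V_T^k$ by mesh regularity) bounding $\norm[\partial T]{v_h-\lproj{k-1}_Fv_h}$ by $h_T^{1/2}\norm[T]{\diff[T]^{1/2}\GRAD(I-\dproj{k})v_h}\underline{\lambda}_T^{-1/2}$-type quantities; again the $\underline{\lambda}_T^{-1/2}$ produces the $\alpha_T^{1/2}$. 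Summing over $T$, applying Cauchy--Schwarz over the cells, and recognising $\sum_T\norm[T]{\diff[T]^{1/2}\GRAD(I-\dproj{k})v_h}^2\le\norm[\aXh]{v_h}^2$ yields the claimed bound on $\norm[\aYh^\star]{\Cerr{u}{\cdot}}$, and hence \eqref{vem:est.energy}.

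For \eqref{vem:est.recons}, write
\[
\diff^{\frac12}(\GRADh\dproj[h]{k}u_h-\GRAD u)=\diff^{\frac12}\GRADh\dproj[h]{k}(u_h-\aIh u)+\diff^{\frac12}\GRADh(\dproj[h]{k}\aIh u-\dproj[h]{k}u)+\diff^{\frac12}(\GRADh\dproj[h]{k}u-\GRAD u),
\]
localise each term to $T$, and estimate: the first by \eqref{dproj:approx:3} and \eqref{vem:est.energy} (note $\norm[T]{\diff[T]^{1/2}\GRAD\dproj{k}w_h}\le\norm[\aXh]{w_h}$-componentwise since $\abilh$ controls the consistency part); the third by \eqref{dproj:approx} with $m=0$, $s=r+1$; and the middle cross term $\dproj{k}(\aIh u-u)_{|T}$ by \eqref{dproj:approx:3} applied to $\aIh u-u$ together with \textbf{(I1)}--\textbf{(I2)} and the Dupont--Scott approximation of $u$ by a local polynomial, which again costs $\alpha_T^{1/2}\overline{\lambda}_T^{1/2}h_T^r\seminorm[H^{r+1}(T)]{u}$. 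A final Cauchy--Schwarz over $T\in\Th$ delivers \eqref{vem:est.recons}.

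The main obstacle I anticipate is the careful bookkeeping of the diffusion-tensor dependence in the face/stabilisation terms: the natural estimates \eqref{dproj:trace} and \eqref{vem:IT.bound} are phrased in terms of $\diff[T]^{1/2}$-weighted gradients or of unweighted gradients times $\overline{\lambda}_T^{1/2}$, whereas the pairing with $v_h$ on faces must be dualised through $\norm[T]{\diff[T]^{1/2}\GRAD(I-\dproj{k})v_h}$, and matching the two requires inserting $\underline{\lambda}_T^{-1/2}$ exactly once per face term and nowhere else — it is this single, deliberate use of the lower eigenvalue bound that produces the clean $\alpha_T$ (rather than a higher power of the anisotropy ratio) in \eqref{vem:est.energy}--\eqref{vem:est.recons}. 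Getting the discrete trace inequality on $V_T^k$ with the right $h_T$-power, and ensuring the face terms are single-valued so that \eqref{vem:cont.vh}--\eqref{vem:bc.vh} can be exploited, are the delicate points; everything else is a routine assembly of \eqref{eq:lproj:approx}, \eqref{eq:lproj:approx.trace}, Theorem \ref{thm:approx.dproj}, and \textbf{(I1)}--\textbf{(I2)}.
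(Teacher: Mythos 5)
Your proposal follows essentially the same route as the paper: coercivity with $\coer=1$ via Theorem \ref{a:th.est.var}, a split of the consistency error into source/face, elliptic-projection, interpolation ($\aIh u$ vs.\ $u$) and stabilisation contributions bounded with Theorem \ref{thm:approx.dproj}, \eqref{eq:lproj:approx.trace} and \textbf{(I1)}--\textbf{(I2)}, with a single $\underline{\lambda}_T^{-1/2}$ per term producing $\alpha_T^{1/2}$, and the same triangle inequality for \eqref{vem:est.recons}. One small correction: the bound $\norm[F]{\lproj[T]{l}v_h-v_h}\lesssim h_T^{1/2}\norm[T]{\GRAD v_h}$ should come from the continuous trace--approximation estimate \eqref{eq:lproj:approx.trace} applied to $v_h\in H^1(T)$ (as the paper does), not from a "discrete trace inequality on $V_T^k$", which is not available since virtual functions are not polynomials.
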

\begin{remark}[Diffusion varying in each cell]
If $\diff$ is not piecewise-constant in the cells, the construction of VEM methods has to be adjusted by using the $L^2$-orthogonal projection of the gradient of virtual functions; see, e.g., \cite{Beirao-da-Veiga.Brezzi.ea:16,Cangiani.Manzini.ea:17}. In the context of HHO methods (strongly related to non-conforming VEM), similar ideas have been used in  \cite{Di-Pietro.Droniou:17} for the approximation of fully non-linear models; see also the discussion in \cite[Section 4]{Di-Pietro.Droniou.ea:18}. A different approach in the context of HHO methods, valid for linear problems, consists in incorporating the variable diffusion coefficient into the local reconstruction; see \cite{Di-Pietro.Ern:15}.

It is worth noting that Theorem \ref{th:vem.energy} remains of interest even for a piecewise constant diffusion tensor.
Even though the $H^{r+1}(\Th)$ regularity of the solution cannot always be ascertained if $\diff$ is discontinuous (counter-examples to the $H^2(\Th)$ regularity can be constructed for some piecewise-constant diffusions and smooth source terms \cite{Tartar:15}), one can easily find many situations in which $\diff$ is piecewise-constant and the solution belongs to $H^{r+1}(\Th)$, situations for which Theorem \ref{th:vem.energy} yields a meaningful estimate. On the contrary, an estimate based on the $H^{r+1}(\Omega)$-norm would essentially impose a smooth $\diff$ over the entire domain -- as any discontinuity of the diffusion essentially prevents the solution from being in $H^2(\Omega)$ or more regular.
\end{remark}
\begin{proof}
By choice of the norm on $V_{h,0}^k$, the bilinear form $\abilh$ is coercive with constant $1$.
Hence, \eqref{vem:est.energy} follows from \eqref{energy.est} (with $\gamma=1$) if we estimate the norm of the
consistency error appropriately. Since $f=-\DIV(\diff\GRAD u)$ we have, for all $v_h\in V_{h,0}^k$,
\begin{align}
	\Cerr{u}{v_h}={}&\sum_{T\in\Th}(-\DIV(\diff[T]\GRAD u),\lproj[T]{l}v_h)_T-\sum_{T\in\Th}(\diff[T]\GRAD\dproj{k}\aIh u,\GRAD\dproj{k}v_h)_T\nonumber\\
	&-\sum_{T\in\Th}s_T\left((I-\dproj{k})\aIh u,(I-\dproj{k})v_h\right)\nonumber\\
={}&\sum_{T\in\Th}(-\DIV(\diff[T]\GRAD u),\lproj[T]{l}v_h)_T-\sum_{T\in\Th}(\diff[T]\GRAD\dproj{k}u,\GRAD\dproj{k}v_h)_T\nonumber\\
	&-\sum_{T\in\Th}(\diff[T]\GRAD\dproj{k}(\aIh u-u),\GRAD\dproj{k}v_h)_T-\sum_{T\in\Th}s_T\left((I-\dproj{k})\aIh u,(I-\dproj{k})v_h\right)\nonumber\\
=:{}&\term_1+\term_2+\term_3+\term_4.
\label{vem:cons.1}
\end{align}
\medskip\\
(i) \emph{Term $\term_1+\term_2$}. Performing element-wise integrations-by-parts, we write
\begin{align}
	\term_{1}={}&\sum_{T\in\Th}(\diff[T]\GRAD u,\GRAD \lproj[T]{l}v_h)_T-\sum_{T\in\Th}\sum_{F\in\Fh[T]}(\diff[T]\GRAD u\SCAL\normal_{TF},\lproj[T]{l}v_h)_F
	\nonumber\\
	={}&\sum_{T\in\Th}(\diff[T]\GRAD u,\GRAD \lproj[T]{l}v_h)_T-\sum_{T\in\Th}\sum_{F\in\Fh[T]}(\diff[T]\GRAD u\SCAL\normal_{TF},\lproj[T]{l}v_h-\lproj[F]{k-1}v_h)_F,
	\label{vem:T1}
\end{align}
where the introduction of the term $\lproj[F]{k-1}v_h$ is justified by the conservativity property $\diff[T]\GRAD u\SCAL\normal_{TF}+\diff[T']\GRAD u\SCAL\normal_{T'F}=0$ for all $F\in\Fhi$ with $\Th[F]=\{T,T'\}$, and the continuity property and boundary conditions expressed by \eqref{vem:cont.vh}--\eqref{vem:bc.vh}.
Setting $\term_2=\sum_{T\in\Th}\term_{2,T}$, we write
\begin{align}
	\term_{2,T}={}&-(\diff[T]\GRAD\dproj{k}u,\GRAD v_h)_T\nonumber\\
		={}&(\DIV(\diff[T]\GRAD\dproj{k}u),\lproj[T]{l}v_h)_T -\sum_{F\in\Fh[T]}(\diff[T]\GRAD\dproj{k}u\SCAL\normal_{TF},\lproj[F]{k-1}v_h)_F\nonumber\\
		={}&-(\diff[T]\GRAD u,\GRAD \lproj[T]{l}v_h)_T-\sum_{F\in\Fh[T]}(\diff[T]\GRAD\dproj{k}u\SCAL\normal_{TF},\lproj[F]{k-1}v_h-\lproj[T]{l}v_h)_F,
	\label{vem:T2}
\end{align}
where the first line follows from the definition \eqref{def:dproj.1} of the oblique elliptic projector with $v=v_h$ and $w=\dproj{k}u$, the second line is obtained by performing an integration-by-parts and using $\DIV(\diff[T]\GRAD\dproj{k}u)\in\Poly{l}(T)$ (since $l\ge k-2$) and $\diff[T]\GRAD\dproj{k}u\SCAL\normal_{TF}\in \Poly{k-1}(F)$ to replace $v_h$ by its projections on local element- and face-polynomial spaces, and the third line is a consequence of another integration-by-parts and of the definition \eqref{def:dproj.1} of the oblique elliptic projector which, applied to $v=u$ and $w=\lproj[T]{l}v_h$ (note that $l\le k$), gives $(\diff[T]\GRAD \dproj{k}u,\GRAD \lproj[T]{l}v_h)_T=(\diff[T]\GRAD u,\GRAD \lproj[T]{l}v_h)_T$. Hence, summing \eqref{vem:T2} over $T\in\Th$ and gathering with \eqref{vem:T1} yields
\[
\term_1+\term_2=\sum_{T\in\Th}\sum_{F\in\Fh[T]}(\diff[T](\GRAD \dproj{k}u-\GRAD u)\SCAL\normal_{TF},\lproj[T]{l}v_h-\lproj[F]{k-1}v_h)_F.
\]
We then estimate $\term_1+\term_2$ by using the Cauchy--Schwarz inequality:
\begin{align}
	|\term_1+\term_2|\le{}& \sum_{T\in\Th} \sum_{F\in\Fh[T]}\norm[F]{\diff[T]^{\frac12}(\GRAD\dproj{k}u-\GRAD u)}\overline{\lambda}_T^{\frac12}\norm[F]{\lproj[F]{k-1}(\lproj[T]{l}v_h-(v_h)_{|T})}
	\label{vem:T1T2.L2.dual}\\
	\lesssim{}& \sum_{T\in\Th} \overline{\lambda}_T^{\frac12}h_T^{r-\frac12}\seminorm[H^{r+1}(T)]{u}\overline{\lambda}_T^{\frac12}\norm[F]{\lproj[T]{l}v_h-(v_h)_{|T}}
\label{vem:T1T2.L2.primaldual}\\
	\lesssim{}& \sum_{T\in\Th} \overline{\lambda}_T^{\frac12}h_T^{r-\frac12}\seminorm[H^{r+1}(T)]{u}\overline{\lambda}_T^{\frac12}h_T^{\frac12}\norm[T]{\GRAD v_h},\nonumber
\end{align}
where we have used $l\le k-1$ together with the linearity and idempotency of $\lproj[F]{k-1}$ in the first line to write $\lproj[T]{l}v_h-\lproj[F]{k-1}v_h=\lproj[T]{l}v_h-\lproj[F]{k-1}(v_h)_{|T}=
\lproj[F]{k-1}(\lproj[T]{l}v_h-(v_h)_{|T})$, we passed to the second line by using
the trace approximation properties \eqref{dproj:trace} of $\dproj{k}$ (with $s=r+1$ and $m=0$)
and the $L^2(F)$-boundedness property of $\lproj[F]{k-1}$, and we concluded by invoking
the trace approximation property \eqref{eq:lproj:approx.trace} of $\lproj[T]{l}$ with $m=0$ and $s=1$.
Recalling the definition \eqref{def:alphaT} of $\alpha_T$, we have 
\begin{equation}\label{vem:est.gradvh}
	\overline{\lambda}_T^{\frac12}\norm[T]{\GRAD v_h}\le \overline{\lambda}_T^{\frac12}\underline{\lambda}_T^{-\frac12}\norm[T]{\diff[T]^{\frac12}\GRAD v_h}=\alpha_T^{\frac12}\norm[T]{\diff[T]^{\frac12}\GRAD v_h}\lesssim \alpha_T^{\frac12}a_T(v_h,v_h)^\frac12,
\end{equation}
where the last inequality is obtained introducing $\pm \diff[T]^{\frac12}\GRAD\dproj{k}v_h$ into the norm, using the triangle inequality, invoking the property \eqref{vem:def.sT} of $s_T$ with $w_h=v_h-\dproj{k}v_h$, and recalling the definition \eqref{vem:def.ah} of $a_T$.
Thus, using a Cauchy--Schwarz inequality on the sum over $T\in\Th$ and recalling the definition \eqref{vem:def.norm} of $\norm[\aXh]{{\cdot}}$, we conclude that
\begin{align}
	|\term_1+\term_2|\lesssim{}& \left(\sum_{T\in\Th} \alpha_T\overline{\lambda}_Th_T^{2r}\seminorm[H^{r+1}(T)]{u}^2\right)^\frac12 \norm[\aXh]{v_h}.
	\label{vem:T1T2}
\end{align}
\medskip\\
(ii) \emph{Term $\term_3$}. Apply \eqref{vem:IT.bound} with $\phi=u-\dproj{k}u$, which satisfies  $I_T \phi=I_T u-\dproj{k}u$ by the linearity of $I_T$ together with \textbf{(I2)}, to write
\begin{equation}\label{vem:IT.dproju}
	\norm[T]{\diff[T]^{\frac12}\GRAD(I_T u-\dproj{k}u)}\lesssim \overline{\lambda}_T^{\frac12}\left(\norm[T]{\GRAD(u-\dproj{k}u)}+h_T\seminorm[H^1(T)^d]{\GRAD (u-\dproj{k}u)}\right).
\end{equation}
A triangle inequality (introducing $\pm \diff[T]^{\frac12}\GRAD\dproj{k}u$ into the left-hand side) followed by \eqref{vem:IT.dproju}, the definition \eqref{def:alphaT} of $\alpha_T$, and the approximation properties \eqref{dproj:approx} of $\dproj[T]{k}$ with $s=r+1$ and $m=0$ and $m=1$ yield
\begin{align}
	\norm[T]{\diff[T]^{\frac12}\GRAD(I_T u-u)}\le{}&\norm[T]{\diff[T]^{\frac12}\GRAD(I_T u-\dproj[T]{k}u)}+	\norm[T]{\diff[T]^{\frac12}\GRAD(\dproj[T]{k}u-u)}\nonumber\\
\lesssim{}& \overline{\lambda}_T^{\frac12}
\left(\norm[T]{\GRAD(u-\dproj{k}u)}+h_T\seminorm[H^1(T)^d]{\GRAD (u-\dproj{k}u)}\right)\nonumber\\
	\lesssim{}&\alpha_T^{\frac12}\left(\norm[T]{\diff[T]^{\frac12}\GRAD(u-\dproj{k}u)}
+h_T\seminorm[H^1(T)^d]{\diff[T]^{\frac12}\GRAD(u-\dproj{k}u)}\right)\nonumber\\
		\lesssim{}& \alpha_T^{\frac12}\overline{\lambda}_T^{\frac12}h_T^r\seminorm[H^{r+1}(T)]{u}.
\label{vem:approx.ITu}
\end{align}
Applying the boundedness property \eqref{dproj:approx:3} of $\dproj[T]{k}$ to $\phi=I_T u-u$ and using \eqref{vem:approx.ITu} then leads to
\begin{equation}\label{vem:approx.dproj.ITu}
	\norm[T]{\diff[T]^{\frac12}\GRAD\dproj{k}(I_T u-u)}\lesssim \alpha_T^{\frac12}\overline{\lambda}_T^{\frac12}h_T^r\seminorm[H^{r+1}(T)]{u}.
\end{equation}
Using the Cauchy--Schwarz inequality, \eqref{vem:approx.dproj.ITu} along with \eqref{dproj:approx:3} for $\phi=v_h$, again a Cauchy--Schwarz inequality this time on the sum over $T\in\Th$, and \eqref{vem:est.gradvh} followed by the definition \eqref{vem:def.norm} of the norm $\norm[\aXh]{{\cdot}}$, we finally infer for the third term
\begin{equation}\label{vem:T3}
\begin{aligned}
	|\term_3|\le{}& \sum_{T\in\Th}\norm[T]{\diff[T]^\frac12\GRAD\dproj{k}(\aIh u-u)}\norm[T]{\diff[T]^\frac12\GRAD\dproj{k}v_h}\\
	\lesssim{}&\sum_{T\in\Th}\alpha_T^{\frac12}\overline{\lambda}_T^{\frac12}h_T^r\seminorm[H^{r+1}(T)]{u}
\norm[T]{\diff[T]^\frac12\GRAD v_h}
	\lesssim\left(\sum_{T\in\Th} \alpha_T\overline{\lambda}_Th_T^{2r}\seminorm[H^{r+1}(T)]{u}^2\right)^\frac12 \norm[\aXh]{v_h}.
\end{aligned}
\end{equation}
\medskip\\
(iii) \emph{Term $\term_4$}. We have
\begin{align}
	\Big|s_T\Big((I-\dproj{k})&\aIh u,(I-\dproj{k})v_h\Big)\Big|\nonumber\\
	\lesssim{}&s_T\Big((I-\dproj{k})\aIh u,(I-\dproj{k})\aIh u\Big)^{\frac12}s_T\Big((I-\dproj{k})v_h,(I-\dproj{k})v_h\Big)^{\frac12}\nonumber\\
	\lesssim{}&\norm[T]{\diff[T]^{\frac12}(\GRAD (I-\dproj{k})\aIh u)}\norm[T]{\diff[T]^{\frac12}\GRAD (v_h-\dproj[T]{k}v_h)}
	\label{vem:T4.L2}
\\
	\lesssim{}&\norm[T]{\diff[T]^{\frac12}(\GRAD (I-\dproj{k})\aIh u)}\norm[T]{\diff[T]^{\frac12}\GRAD v_h},
	\label{vem:T4.1}
\end{align}
where the first line follows from a Cauchy--Schwarz inequality, the second line is a consequence of \eqref{vem:def.sT}, and the third line is obtaind using the boundedness property \eqref{dproj:approx:3} of $\dproj{k}$. Introducing $\pm\diff[T]^{\frac12}\GRAD(u-\dproj{k}u)$ into the norm and using triangle inequalities, the first factor in the right-hand side of \eqref{vem:T4.1} is estimated by
\begin{align}
	\norm[T]{\diff[T]^{\frac12}\GRAD (I_T u-\dproj{k}I_T u)}\le{}&
	\norm[T]{\diff[T]^{\frac12}\GRAD (I_T u-u)}+
	\norm[T]{\diff[T]^{\frac12}\GRAD (u-\dproj{k}u)}+\norm[T]{\diff[T]^{\frac12}\GRAD \dproj{k}(u-I_T u)}\nonumber\\
	\lesssim{}& \alpha_T^{\frac12}\overline{\lambda}_T^{\frac12}h_T^r\seminorm[H^{r+1}(T)]{u},
\label{vem:dble.triangle}
\end{align}
where the conclusion follows from \eqref{vem:approx.ITu}, the approximation properties \eqref{dproj:approx} of $\dproj{k}$, and \eqref{vem:approx.dproj.ITu}. Plugging this estimate into \eqref{vem:T4.1}, summing over $T\in\Th$, using a Cauchy--Schwarz inequality on the sum over $T\in\Th$, and invoking \eqref{vem:est.gradvh} together with the definition \eqref{vem:def.norm} of $\norm[\aXh]{{\cdot}}$,
this yields
\begin{equation}\label{vem:T4}
	|\term_4|\lesssim \left(\sum_{T\in\Th}\alpha_T \overline{\lambda}_T h_T^{2r}\seminorm[H^{r+1}(T)]{u}^2\right)^{\frac12}\norm[\aXh]{v_h}.
\end{equation}
\medskip\\
\noindent (iv) \emph{Conclusion}. Plugging \eqref{vem:T1T2}, \eqref{vem:T3}, and \eqref{vem:T4} into \eqref{vem:cons.1} shows that $\norm[\aXh^\star]{\Cerr{u}{\cdot}}$ is bounded (up to a multiplicative constant) by the right-hand side of \eqref{vem:est.energy}, which concludes the proof of this inequality.

To establish \eqref{vem:est.recons}, we notice that, by definitions \eqref{vem:def.norm} of the norm on $V_{h,0}^k$ and \eqref{vem:def.ah} of $\abilh$,
\[
	\norm{\diff^{\frac12}(\GRADh\dproj[h]{k}u_h-\GRADh \dproj[h]{k}\aIh u)}\le \norm[\aXh]{u_h-\aIh u}\lesssim \left(\sum_{T\in\Th}\alpha_T \overline{\lambda}_T h_T^{2r}\seminorm[H^{r+1}(T)]{u}^2\right)^{\frac12}.
\]
The estimate \eqref{vem:est.recons} follows by introducing $\pm\diff^{\frac12}\GRADh(\dproj[h]{k}\aIh u-\dproj[h]{k}u)$ in its left-hand side, and by invoking \eqref{vem:approx.dproj.ITu} and the optimal approximation properties \eqref{dproj:approx} of the oblique elliptic projector, in a similar way as in \eqref{vem:dble.triangle}. \end{proof}

\begin{remark}[Unified analysis of conforming and non-conforming VEM]
  A unified analysis of conforming and non-conforming VEM based on an adaptation of the second Strang lemma has been recently proposed in \cite{Cangiani.Manzini.ea:17} in the context of more general second-order elliptic problems.
    
    A first difference with the present work is that, therein, the error is measured as $u-u_h$, the difference between the continuous and the virtual solutions.
    Thus, compared to Theorem \ref{a:th.est.var}, several additional terms have to be estimated in order to deduce an order of convergence from \cite[Theorem 2]{Cangiani.Manzini.ea:17}.
    These measure, in an appropriate way: the approximation properties of the virtual space $V_{h,0}^k$, those of the broken polynomial space $\Poly{k}(\Th)$, and the nonconformity of the method.
    
    A second difference with respect to the present work is that the dependence of the constants on the problem data is not specifically tracked.
    In the context of HHO methods, error estimates robust with respect to the problem data for second-order elliptic problems similar to the ones considered in \cite{Cangiani.Manzini.ea:17} have been recently proposed in \cite{Di-Pietro.Droniou.ea:15}; for a study of the links between HHO and non-conforming VEM we refer the reader to \cite{Cockburn.Di-Pietro.ea:16,Boffi.Di-Pietro:18,Di-Pietro.Droniou.ea:18}.
\end{remark}

\subsubsection{Improved error estimate in the $L^2$ norm}

\begin{theorem}[$L^2$ estimates for VEM schemes]\label{th:vem.L2}
Under the hypotheses of Theorem \ref{th:vem.energy}, assume moreover that $k\ge 2$,
that $l=1$ if $k=2$, that elliptic regularity holds  for \eqref{eq:strong}, and that
\begin{equation}\label{vem:proj.IT}
\lproj[T]{k-2}I_T \phi = \lproj[T]{k-2}\phi\quad\forall T\in\Th\,,\;\forall \phi\in H^1(T).
\end{equation}
Then, it holds that
\begin{equation}\label{vem:error.L2}
\norm{\dproj[h]{k}u_h-u}\lesssim h^{r+1}\seminorm[H^{r+1}(\Th)]{u},
\end{equation}
where the multiplicative constant additionally depends on $\diff$.
\end{theorem}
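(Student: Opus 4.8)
The plan is to apply the Aubin--Nitsche machinery of Theorem \ref{th:abstract.weak} with $\aL=L^2(\Omega)$ and $r_h=\dproj[h]{k}$ (the patched elliptic projector), so that $r_h(u_h-\aIh u)=\dproj[h]{k}u_h-\dproj[h]{k}\aIh u$. First I would record that the continuous dual problem associated with $g=(\psi,\cdot)$ for $\psi\in L^2(\Omega)$ is, since $a_{\diff}$ is symmetric, the same diffusion problem: $-\DIV(\diff\GRAD z_\psi)=\psi$ with homogeneous Dirichlet conditions, and by the assumed elliptic regularity $z_\psi\in H^2(\Omega)$ with $\norm[H^2(\Omega)]{z_\psi}\lesssim\norm{\psi}$ (the constant depending on $\diff$). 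I would then choose $\aJh z_\psi=\aIh z_\psi$, which is legitimate because $z_\psi\in H^2(\Omega)\subset C(\overline\Omega)$ in the relevant dimensions (and the interpolant is defined on $H^1_0\cap C(\overline\Omega)$). With this choice \eqref{a:weak.est} reads
\[
\norm{\dproj[h]{k}(u_h-\aIh u)}\le\norm[\aXh]{u_h-\aIh u}\,\sup_{\norm{\psi}\le1}\norm[\aXh^\star]{\Cerrdual{z_\psi}{\cdot}}+\sup_{\norm{\psi}\le1}\Cerr{u}{\aIh z_\psi},
\]
and afterwards I would pass from $\norm{\dproj[h]{k}(u_h-\aIh u)}$ to $\norm{\dproj[h]{k}u_h-u}$ by inserting $\pm\dproj[h]{k}u$ and $\pm u$ and using the $L^2$ approximation properties of $\dproj[h]{k}$ (via \eqref{dproj:approx} with $m=0$ after a Poincar\'e-type argument, or directly) together with \eqref{vem:approx.dproj.ITu}, exactly as at the end of the proof of Theorem \ref{th:vem.energy}; these contribute $O(h^{r+1})$.

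Next I would bound the two suprema. For the dual consistency error $\Cerrdual{z_\psi}{\cdot}=g\circ r_h-\abilh(\cdot,\aIh z_\psi)$: since $\abilh$ is symmetric and $\alinh$ here plays no role, $\Cerrdual{z_\psi}{\cdot}$ has exactly the same structure as the primal consistency error $\Cerr{z_\psi}{\cdot}$ but with $g\circ r_h=(\psi,\dproj[h]{k}\cdot)$ in place of $\alinh$. One must check that $(\psi,\dproj[h]{k}v_h)=\sum_T(\psi,\dproj{k}v_h)_T$ can be rewritten, up to harmless terms, as $\sum_T(\psi,\lproj[T]{l}v_h)_T$ so that the computation \eqref{vem:cons.1}--\eqref{vem:T4} applies verbatim with $u\rightsquigarrow z_\psi$ and $r\rightsquigarrow 1$; this is where the extra hypothesis \eqref{vem:proj.IT} and the choice $l=1$ when $k=2$ are used, to guarantee that $\dproj{k}v_h$ and $\lproj[T]{l}v_h$ agree after projection onto $\Poly{k-2}(T)$ (note $\dproj{k}v_h-\lproj[T]{k-2}\dproj{k}v_h$ has a low-degree part controlled by the $H^1$-seminorm, producing an extra $h_T$). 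Running the estimates of Theorem \ref{th:vem.energy} with $z_\psi\in H^2$ then yields $\norm[\aXh^\star]{\Cerrdual{z_\psi}{\cdot}}\lesssim(\sum_T\alpha_T\overline\lambda_T h_T^2\seminorm[H^2(T)]{z_\psi}^2)^{1/2}\lesssim h\norm{\psi}$, the constant absorbing the diffusion-dependence. Combined with the energy estimate \eqref{vem:est.energy}, the first term on the right-hand side is $O(h^{r+1})$ as required.

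The main obstacle is the primal-dual consistency term $\sup_{\norm{\psi}\le1}\Cerr{u}{\aIh z_\psi}$: a naive use of $\norm[\aXh^\star]{\Cerr{u}{\cdot}}\norm[\aXh]{\aIh z_\psi}$ would only give $O(h^r)\cdot O(1)$, which is not good enough. Here I would revisit the decomposition $\Cerr{u}{v_h}=\term_1+\term_2+\term_3+\term_4$ of \eqref{vem:cons.1} with $v_h=\aIh z_\psi$ and, in \emph{each} term, avoid collapsing everything onto $\norm[\aXh]{v_h}$; instead I would keep the precise factors $\norm[T]{\lproj[T]{l}v_h-(v_h)_{|T}}$, $\norm[T]{\diff[T]^{1/2}\GRAD(v_h-\dproj{k}v_h)}$, etc.\ that appear in \eqref{vem:T1T2.L2.dual}--\eqref{vem:T4.L2}, and estimate those using the approximation properties of $\aIh$ and $\dproj{k}$ applied to $z_\psi\in H^2(T)$ — e.g.\ $\norm[T]{\lproj[T]{l}\aIh z_\psi-(\aIh z_\psi)_{|T}}\lesssim\norm[T]{\lproj[T]{l}z_\psi-z_\psi}+\norm[T]{(I_T-\lproj[T]{l})(z_\psi-(\dproj{k}z_\psi)_{|T})}\lesssim h_T\alpha_T^{1/2}\overline\lambda_T^{-1/2}\cdot(\text{$H^2$ data})$ using \eqref{vem:proj.IT}, and similarly $\norm[T]{\diff[T]^{1/2}\GRAD(\aIh z_\psi-\dproj{k}\aIh z_\psi)}\lesssim\alpha_T^{1/2}\overline\lambda_T^{1/2}h_T\seminorm[H^2(T)]{z_\psi}$ from \eqref{vem:dble.triangle}. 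Pairing these $O(h_T)$-in-the-dual factors against the $O(h_T^{r-1/2})$ or $O(h_T^r)$ factors coming from $u\in H^{r+1}$ that already appear in \eqref{vem:T1T2.L2.primaldual}, \eqref{vem:T3}, \eqref{vem:T4.L2}, and using Cauchy--Schwarz over $T\in\Th$ together with $(\sum_T\seminorm[H^2(T)]{z_\psi}^2)^{1/2}\le\norm[H^2(\Omega)]{z_\psi}\lesssim\norm{\psi}$, gives $\Cerr{u}{\aIh z_\psi}\lesssim h^{r+1}\seminorm[H^{r+1}(\Th)]{u}\,\norm{\psi}$. Taking the supremum over the unit ball of $L^2(\Omega)$ and collecting all contributions then yields \eqref{vem:error.L2}; I would close by remarking that all constants depend on $\diff$ only through fixed powers of the global anisotropy and heterogeneity, inherited from the elliptic-regularity constant.
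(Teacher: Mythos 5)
Your overall architecture is the paper's: apply Theorem \ref{th:abstract.weak} with $r_h=\dproj[h]{k}$ and $\aJh=\aIh$, obtain the dual consistency bound by symmetry of $\abilh$ and reuse of the energy-norm consistency estimate with $z_g\in H^2(\Omega)$ and $r=1$, and gain the extra power of $h$ in the primal--dual term by exploiting that the test vector is $\aIh z_g$ rather than arbitrary. Two remarks, the second of which is a genuine gap.

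First, a misattribution in the dual-consistency step: the replacement of the source term $\sum_T(g,\lproj[T]{l}v_h)_T$ by $\sum_T(g,\dproj{k}v_h)_T$ is \emph{not} handled through \eqref{vem:proj.IT} (which is a property of the interpolant $I_T$ and says nothing about a generic $v_h$). The mechanism is the closure equation \eqref{def:dproj.2}: $\dproj[T]{k}v_h-\lproj[T]{k-1}v_h$ has zero mean over $T$, so a Poincar\'e--Wirtinger argument via \eqref{eq:lproj:approx} yields the bound $\norm[F]{\dproj[T]{k}v_h-\lproj[F]{k-1}v_h}\lesssim h_T^{1/2}\norm[T]{\GRAD v_h}$ needed in place of the estimate used in \eqref{vem:T1T2.L2.dual}. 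The hypothesis $l=1$ when $k=2$ is instead consumed in the primal--dual estimate of $\term_1+\term_2$, where one applies \eqref{eq:lproj:approx.trace} with $s=2\le l+1$. This is fixable bookkeeping.

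The substantive gap is $\term_3$. Your plan is to re-run the energy-norm estimates keeping the ``precise factors'' and observe that they become $O(h_T)$ when $v_h=\aIh z_g$. This works for $\term_1+\term_2$ (the factor $\norm[F]{\lproj[T]{l}v_h-(v_h)_{|T}}$ becomes $O(h_T^{3/2}\seminorm[H^2(T)]{z_g})$) and for $\term_4$ (both factors in \eqref{vem:T4.L2} are controlled by \eqref{vem:dble.triangle}). It fails for $\term_3=-\sum_T(\diff[T]\GRAD\dproj{k}(\aIh u-u),\GRAD\dproj{k}v_h)_T$: the only factor available from the Cauchy--Schwarz bound \eqref{vem:T3} is $\norm[T]{\diff[T]^{1/2}\GRAD\dproj{k}\aIh z_g}$, which is $O(\norm[T]{\diff[T]^{1/2}\GRAD z_g})=O(1)$, not $O(h_T)$; no amount of ``keeping precise factors'' extracts an extra power of $h$ here, and one is stuck at $O(h^r)$. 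The paper resolves this with a separate duality-type manipulation: use \eqref{def:dproj.1} to rewrite $\term_{3,T}=-(\diff[T]\GRAD(I_Tu-u),\GRAD\dproj{k}I_Tz_g)_T$, integrate by parts onto the polynomial $\dproj{k}I_Tz_g$, invoke \eqref{vem:proj.IT} to annihilate the volume contribution $(\lproj[T]{k-2}(I_Tu-u),\DIV(\diff[T]\GRAD\dproj{k}I_Tz_g))_T$, and use continuity of the normal flux of $\diff\GRAD z_g$ to subtract the exact flux on the faces, leaving a product of $O(h_T^{r+1/2})$ and $O(h_T^{1/2})$ face terms. This is the crux of the proof (the paper itself flags $\term_3$ as ``by far the most troublesome term''), and your proposal as written does not contain it.
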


\begin{remark}[Assumption \eqref{vem:proj.IT}]
Assumption \eqref{vem:proj.IT} holds for both conforming and non-conforming VEM methods, as the moments
of degree $k-2$ in the cells are part of the DOFs of the methods, and $I_T \phi$ is defined as the element of $V_T^k$ that has the same DOFs as $\phi$.
\end{remark}

\begin{remark}[Dependency on the diffusion field in $L^2$ estimates]\label{rem:vem.L2}
  Elliptic regularity for problem \eqref{eq:strong} is only known if $\Omega$ is convex and $\diff$ is Lipschitz continuous.
  Combined with the assumption that $\diff$ is piecewise constant, this imposes $\diff$ constant over the entire domain, which means that we can treat anisotropic but not heterogeneous diffusion.
  For this reason, we make no attempt whatsoever to track the dependence on the diffusion field in the $L^2$ error estimate.
\end{remark}

\begin{proof} The elliptic regularity shows that, for all $g\in L^2(\Omega)$, $z_g\in H^2(\Omega)$ and
$\norm[H^2(\Omega)]{z_g}\lesssim \norm{g}$.
Estimate \eqref{vem:error.L2} therefore follows from \eqref{vem:est.energy}, Theorem \ref{th:abstract.weak} with the choice $r_h\coloneq \dproj[h]{k}$, and \eqref{vem:proj.IT} (which shows that $\dproj{k}I_T u-u=\dproj{k}I_T u-u-\lproj[T]{0}(\dproj{k}I_T u-u)$, whose $L^2(T)$-norm can be estimated using \eqref{eq:lproj:approx} and \eqref{vem:approx.dproj.ITu}), if we can prove that (with, as in the theorem, hidden constants in $\lesssim$ possibly depending on $\diff$) 
\begin{align}
\norm[\aXh^\star]{\Cerrdual{z_g}{\cdot}}\lesssim{}& h\norm[H^2(\Omega)]{z_g}
\label{vem:est.Cerrd}\\
|\Cerr{u}{\aIh z_g}|\lesssim{}& h^{r+1}\seminorm[H^{r+1}(\Th)]{u}\norm[H^2(\Omega)]{z_g}.
\label{vem:est.Cerrprimaldual}
\end{align}

\noindent (i) \emph{Dual consistency}. With our choice of $r_h$, we have
$\Cerrdual{z_g}{v_h}=(g,\dproj[h]{k}v_h)-\abilh(v_h,\aIh z_g)$. Since $\abilh$ is symmetric,
we see that $\Cerrdual{z_g}{v_h}$ is equal to $\Cerr{z_g}{v_h}$ in which the source term $(f,\lproj[h]{l}v_h)$ has been replaced with $(g,\dproj[h]{k}v_h)$. The estimate obtained in the proof of Theorem \ref{th:vem.energy} on the primal consistency error can therefore be used, with $r=1$ and $z_g\in H^2(\Omega)\subset H^{1+r}(\Th)$ instead of $u$, and yields \eqref{vem:est.Cerrd}, provided we examine the impact of changing $\lproj[h]{l}v_h$ into $\dproj[h]{k}v_h$.

The main difference between these two polynomials is that $\dproj[h]{k}v_h$ is a polynomial of degree $\le k$, whereas $\lproj[h]{l}v_h$ is a polynomial of degree $l\le k-1$. An inspection of the estimate of the primal consistency error shows that the only place where we used $\lproj[h]{l}v_h\in \Poly{k-1}(\Th)$ is in \eqref{vem:T1T2.L2.dual}, when estimating $\norm[F]{\lproj[T]{l}v_h-\lproj[F]{k-1}v_h}$.
Here, we therefore have to establish that, for $F\in\Fh[T]$,
\begin{equation}\label{vem:change.Cerrdual}
\norm[F]{\dproj[T]{k}v_h-\lproj[F]{k-1}v_h}\lesssim h_T^{\frac12}\norm[T]{\GRAD v_h}.
\end{equation}
We introduce $\pm\lproj[T]{k-1}v_h$ and write
\begin{align}
\norm[F]{\dproj[T]{k}v_h-\lproj[F]{k-1}v_h}
	\lesssim{}&
	\norm[F]{\dproj[T]{k}v_h-\lproj[T]{k-1}v_h}+\norm[F]{\lproj[T]{k-1}v_h-\lproj[F]{k-1}v_h}\nonumber\\
	\lesssim{}&
	h_T^{-\frac12}\norm[T]{\dproj[T]{k}v_h-\lproj[T]{k-1}v_h}+h_T^{\frac12}\norm[T]{\GRAD v_h},
\label{vem:change.Cerrdual.1}
\end{align}
where the first line is a triangle inequality, and the second line follows from a discrete trace inequality in $\Poly{k}(T)$ (which can be proved along the lines of \cite[Lemma 1.46]{Di-Pietro.Ern:12}) together with the arguments deployed after \eqref{vem:T1T2.L2.dual}. By \eqref{def:dproj.2} and since $\lproj[T]{k-1}v_h$ has the same average value over $T$ as $v_h$, we have $\dproj[T]{k}v_h-\lproj[T]{k-1}v_h=\dproj[T]{k}v_h-\lproj[T]{k-1}v_h-\lproj[T]{0}(\dproj[T]{k}v_h-\lproj[T]{k-1}v_h)$.
The approximation properties \eqref{eq:lproj:approx} of $\lproj[T]{0}$ with $s=1$, $m=0$ and $v=\dproj[T]{k}v_h-\lproj[T]{k-1}v_h$ then yield
\[
\norm[T]{\dproj[T]{k}v_h-\lproj[T]{k-1}v_h}\lesssim h_T\norm[T]{\GRAD(\dproj[T]{k}v_h-\lproj[T]{k-1}v_h)}\lesssim h_T \norm[T]{\GRAD v_h},
\]
where we have used the boundedness \eqref{dproj:approx:3} of $\dproj[T]{k}$, and the estimate $\norm[T]{\GRAD \lproj[T]{k-1}v_h}\lesssim \norm[T]{\GRAD v_h}$ (which follows from \eqref{eq:lproj:approx} with $s=m=1$).
Estimate \eqref{vem:change.Cerrdual} is then a consequence of \eqref{vem:change.Cerrdual.1}.

\medskip
\noindent (ii) \emph{Primal-dual consistency}. Following the discussion in Section \ref{sec:link.cons}, we re-visit the estimates on $\term_1,\ldots,\term_4$ in the proof of Theorem \ref{th:vem.energy}, and show that an additional $\mathcal O(h)$ factor can be obtained when $v_h=\aIh z_g$.

Let us start with the estimate \eqref{vem:T1T2.L2.primaldual} on $\term_1+\term_2$.
Recalling that $(v_h)_{|T}=I_T z_g$, introducing $\pm (\lproj[T]{l}z_g-z_g)$ into the norm, and using a triangle inequality, we can write
\begin{align*}
\norm[F]{\lproj[T]{l}v_h-(v_h)_{|T}}\le{}&
\norm[F]{\lproj[T]{l}(I_T z_g-z_g)-(I_T z_g-z_g)}+\norm[F]{\lproj[T]{l}z_g-z_g}\\
\lesssim{}&h_T^{\frac12}\seminorm[H^1(T)]{I_T z_g-z_g}+h_T^{\frac32}\seminorm[H^2(T)]{z_g},
\end{align*}
where the last line follows by applying \eqref{eq:lproj:approx.trace} with, for the first term,
$v=I_T z_g-z_g$, $s=1\le l+1$ and $m=0$ and, for the second term, $v=z_g$, $s=2\le l+1$ and $m=0$.
Invoking then \eqref{vem:approx.ITu} with $z_g$ instead of $u$ and $r=1$, we infer
$\norm[F]{\lproj[T]{l}v_h-v_h}\lesssim h_T^{\frac32}\seminorm[H^2(T)]{z_g}$. Plugged into 
\eqref{vem:T1T2.L2.primaldual}, this yields, thanks to a Cauchy--Schwarz inequality on the sum over $T\in\Th$,
\begin{equation}\label{vem:L2.T1T2}
|\term_1+\term_2|\lesssim \sum_{T\in\Th} h_T^{r+1}\seminorm[H^{r+1}(T)]{u}\seminorm[H^2(T)]{z_g}
\le h^{r+1}\seminorm[H^{r+1}(\Th)]{u}\seminorm[H^2(\Omega)]{z_g}.
\end{equation}

The term $\term_4$ is estimated starting from \eqref{vem:T4.L2}. Each term in the right-hand side of this estimate can be estimated by using \eqref{vem:dble.triangle} on $u$ for the first factor, and with $z_g$ instead of $u$ and $r=1$ for the second factor. Summing the resulting estimates over $T\in\Th$ and using a Cauchy--Schwarz inequality on the sum over $T\in\Th$ shows that
\begin{equation}\label{vem:L2.T4}
|\term_4|\lesssim \sum_{T\in\Th} h_T^r\seminorm[H^{r+1}(T)]{u}h_T\seminorm[H^2(T)]{z_g}
\le h^{r+1}\seminorm[H^{r+1}(\Th)]{u}\seminorm[H^2(\Omega)]{z_g}.
\end{equation}

We now turn to $\term_3$. Coming back to its definition in \eqref{vem:cons.1}, we have
$\term_3=\sum_{T\in\Th}\term_{3,T}$ with
\begin{align*}
\term_{3,T}={}&-(\diff[T]\GRAD(I_T u-u),\GRAD \dproj{k}I_Tz_g)_T\\
={}&(I_T u-u,\GRAD\SCAL(\diff[T]\GRAD \dproj{k}I_Tz_g))_T
-\sum_{F\in\Fh[T]}(I_T u-u,\diff[T]\GRAD \dproj{k}I_Tz_g\SCAL\normal_{TF})_F\\
={}&(\lproj[T]{k-2}(I_T u-u),\GRAD\SCAL(\diff[T]\GRAD \dproj{k}I_Tz_g))_T
-\sum_{F\in\Fh[T]}(\lproj[F]{k-1}(I_T u-u),\diff[T]\GRAD \dproj{k}I_Tz_g\SCAL\normal_{TF})_F,
\end{align*}
where we have used the definition \eqref{def:dproj.1} of $\dproj{k}(I_T u-u)$ with $w=\dproj[T]{k}I_T z_g$ in the first line, an integration by parts in the second line, and the fact that
$\GRAD\SCAL(\diff[T]\GRAD \dproj{k}I_Tz_g)\in\Poly{k-2}(T)$ and $\diff[T]\GRAD \dproj{k}I_Tz_g\SCAL\normal_{TF}\in\Poly{k-1}(F)$ to introduce the $L^2$-projections of $I_T u-u$ in the third line.
By Assumption \eqref{vem:proj.IT}, the first term in the right-hand side vanishes, and thus
$$
\term_{3}
=-\sum_{T\in\Th}\sum_{F\in\Fh[T]}(\lproj[F]{k-1}(I_T u-u),\diff[T]\GRAD (\dproj{k}I_Tz_g-z_g)\SCAL\normal_{TF})_F,
$$
where we have used the continuity property \eqref{vem:cont.vh} and the boundary condition \eqref{vem:bc.vh} on the functions in $V_{h,0}^k$, together with the continuity of the normal component of the flux $\diff\GRAD z_g$, to subtract 
$$\sum_{T\in\Th}\sum_{F\in\Fh[T]}(\lproj[F]{k-1}(I_T u-u),\diff[T]\GRAD z_g\SCAL\normal_{TF})_T=0.$$
A Cauchy--Schwarz inequality then gives
\begin{equation}\label{vem:L2.T3.1}
  |\term_{3}|
  \le\sum_{T\in\Th}\sum_{F\in\Fh[T]}
  \norm[F]{\lproj[F]{k-1}(I_T u-u)}
  \norm[F]{\diff[T]\GRAD (\dproj{k}I_Tz_g-z_g)\SCAL\normal_{TF}}.
\end{equation}
We next bound the factors inside the summation.
Assumption \eqref{vem:proj.IT} shows that $\lproj[T]{0}(I_T u-u)=0$ and thus, by \eqref{eq:lproj:approx.trace} with $s=1$ and $m=0$, we have for the first factor
\begin{align}
\norm[F]{\lproj[F]{k-1}(I_T u-u)}\le\norm[F]{I_T u-u}={}&
\norm[F]{(I_T u-u) - \lproj[T]{0}(I_T u-u)}\nonumber\\
\lesssim{}&h_T^\frac12 \norm[T]{\GRAD(I_T u-u)}\lesssim h_T^{\frac12+r}\seminorm[H^{r+1}(T)]{u},
\label{vem:L2.T3.3}
\end{align}
the conclusion following from \eqref{vem:approx.ITu}.
Introducing $\pm \diff[T]\GRAD\dproj[T]{k}z_g$ and using a triangle inequality, we get for the second factor
\begin{align}
\norm[F]{\diff[T]\GRAD (\dproj{k}I_Tz_g-z_g)\SCAL\normal_{TF}}\lesssim{}&
\norm[F]{\GRAD \dproj{k}(I_Tz_g-z_g)}+\norm[F]{\GRAD(\dproj{k}z_g-z_g)}\nonumber\\
\lesssim{}&h_T^{-\frac12}\norm[T]{\GRAD \dproj{k}(I_Tz_g-z_g)}+h_T^{\frac12}\seminorm[H^2(T)]{z_g}\lesssim h_T^{\frac12}\seminorm[H^2(T)]{z_g},
\label{vem:L2.T3.2}
\end{align}
where we have used a discrete trace inequality in $\Poly{k-1}(T)$ and \eqref{dproj:trace} (with $v=z_g$, $s=2\le k+1$
and $m=0$) to pass to the second line, and we have concluded by invoking \eqref{vem:approx.dproj.ITu} with $z_g$ instead of $u$ and $r=1$.
Plugging \eqref{vem:L2.T3.2} and \eqref{vem:L2.T3.3} into \eqref{vem:L2.T3.1}, we obtain
\[
|\term_3|\lesssim \sum_{T\in\Th} h_T^{r+\frac12}\seminorm[H^{r+1}(T)]{u}h_T^\frac12\seminorm[H^2(T)]{z_g}
\lesssim h^{r+1}\seminorm[H^{r+1}(\Th)]{u}\seminorm[H^2(\Omega)]{z_g}.
\]
Together with \eqref{vem:L2.T1T2} and \eqref{vem:L2.T4}, this establishes \eqref{vem:est.Cerrprimaldual} and concludes
the proof. \end{proof}

\begin{remark}[Simplifications]
The proofs of Theorems \ref{th:vem.energy} and \ref{th:vem.L2} have been made in a unified setting that covers
both conforming and non-conforming VEM.
Simplifications are possible when those methods are considered individually.

For non-conforming VEM \cite{Ayuso-de-Dios.Lipnikov.ea:16}, significant simplifications stem from the following preservation properties of the interpolant: for all $T\in\Th$ and $\phi\in H^1(T)$,
\[
\lproj[T]{k-2}I_T\phi=\lproj[T]{k-2}\phi\quad\mbox{ and }\quad \lproj[F]{k-1}I_T\phi=\lproj[F]{k-1}\phi\quad\forall F\in\Fh[T].
\]
Performing integrations-by-parts on the definition \eqref{def:dproj.1} of $\dproj[T]{k}$, it can easily be seen that these properties imply $\dproj[T]{k}I_T\phi=\dproj[T]{k}\phi$.
As a consequence, the term $\term_3$ entirely vanishes, and a few other estimates are
shorter (e.g., \eqref{vem:dble.triangle} is a direct consequence of \eqref{vem:IT.dproju} and \eqref{dproj:approx}, etc.).
Note that $\term_3$ is by far the most troublesome term to estimate in the proof of Theorem \ref{th:vem.L2}.

In the context of conforming VEM, on the other hand, a slightly simpler argument can be invoked working in a more standard setting corresponding to the classical first Strang lemma; see, e.g., \cite[Lemma 3.11]{Brenner.Guan.ea:17}.
  In this case, the source term for the dual problem is the error $u-u_h$ measured as the difference between the continuous and virtual solutions.

  We close this remark by noticing that, unlike \cite[Theorem 6]{Cangiani.Manzini.ea:17} and \cite[Theorem 3.14]{Brenner.Guan.ea:17}, our $L^2$-error estimate stems from an application of the abstract result of Theorem \ref{th:abstract.weak}, which is not problem-specific.
\end{remark}

\begin{remark}[The lowest-order case]
  As is often the case with mixed and non-conforming methods for diffusion equations on generic grids, $L^2$ error estimates for the lowest degree(s) require specific work and, possibly, additional regularity on the source term; see, e.g., ~\cite{Di-Pietro.Ern.ea:14,Di-Pietro.Ern:17} for primal and mixed HHO methods, \cite{Beirao-da-Veiga.Brezzi.ea:13,Ayuso-de-Dios.Lipnikov.ea:16} for conforming and non-conforming VEM, \cite[Remark 8.5]{Boffi.Di-Pietro:18} for further insight into this topic, and \cite[Section 2.7]{Lipnikov.Manzini:14} for a fix in the context of high-order Mimetic Finite Difference methods.
  In Theorem \ref{th:abstract.weak}, additional work would be required on the primal-dual consistency error. The details can be evinced from the above references, and are omitted here for the sake of brevity.
\end{remark}

\subsection{Finite Volume methods}\label{sec:fv}

The second application of the abstract analysis framework of Section \ref{sec:abstract.analysis} considered here is to Finite Volume (FV) methods.
  In this context, several novelties are present.
  First of all, the analysis is carried out under general assumptions on the numerical fluxes, which enables the simultaneous treatment of several (cell-centred or cell- and face-centred) schemes.
  Second, we provide a clear definition of consistency also for FV schemes for which this notion hadn't been clearly highlighted in the literature.
  Third, to the best of our knowledge, we write the first error estimates, for FV methods, in which the dependency on the diffusion field is finely tracked.

\subsubsection{General theory}\label{sec:fv.general}

The discrete unknowns of Finite Volume methods are usually values at points. We consider here methods with cell- and face-unknowns (see Section \ref{sec:fv.mpfa} for cell-centred methods). A mesh $\Mh=(\Th,\Fh)$ being chosen, we therefore take one point $\vec{x}_T$ in each cell $T\in\Th$ and one point $\vec{x}_F$ on each face $F\in\Fh$; note that these points may not be the centres of mass of the corresponding geometrical objects, and may need to satisfy
specific geometric properties. The $(d-1)$-dimensional measure of a face $F\in\Fh$ is denoted by $|F|$ and, if $T\in\Th[F]$, $d_{TF}$ is the orthogonal distance between $\vec{x}_T$ and $F$.

The space of unknowns is
\[
\aXh\coloneq\left\{v_h=((v_T)_{T\in\Th},(v_F)_{F\in\Fh})\st
v_T\in\Real\;\forall T\in\Th,
\quad v_F\in\Real\;\forall F\in\Fhi,
\quad v_F=0\;\forall F\in\Fhb\right\},
\]
which is equipped with the following discrete equivalent of the $H^1_0$-norm:
\begin{equation}\label{fv:norm.aXh}
  \norm[1,\Th]{v_h}\coloneq\left(\sum_{T\in\Th}\underline{\lambda}_T\seminorm[1,T]{v_h}^2\right)^{\frac12} \quad\mbox{ with }
\quad \seminorm[1,T]{v_h}^2\coloneq\sum_{F\in\Fh[T]}|F|d_{TF}\left(\frac{v_T-v_F}{d_{TF}}\right)^2,
\end{equation}
 (see, e.g., \cite[Section 7.1]{Droniou.Eymard.ea:18} -- note that, contrary to this reference, we explicitly account for the diffusion coefficient here).
For $u\in C(\overline{\Omega})$ with $u_{|\partial\Omega}=0$, an interpolant $\aIh u\in\aXh$ is defined by setting
\[
\aIh u=\Big((u(\vec{x}_T))_{T\in\Th},(u(\vec{x}_F))_{F\in\Fh}\Big).
\]
Note that, in dimensions $\le 3$, the solution $u$ to \eqref{eq:weak} is (H\"older) continuous on
$\overline{\Omega}$ \cite{Stampacchia:65}.

FV methods are characterised by flux conservativity and balance equations. Following the pre\-sen\-tation in \cite{Droniou:14}, a generic FV method for \eqref{eq:strong} is written:
Find $u_h\in \aXh$ such that
\begin{subequations}
\label{scheme:fv}
	\begin{align}
	\Fl(u_h) + \Fl[T'F](u_h)=0&\qquad\forall F\in\Fhi\mbox{ with }\Th[F]=\{T,T'\},\label{fv:conservativity}\\
	\sum_{F\in\Fh[T]}\Fl(u_h) = \int_T f&\qquad\forall T\in\Th. \label{fv:balance}
	\end{align}
\end{subequations}
Here, for $T\in\Th$ and $F\in\Fh[T]$, $\Fl:\aXh\to\Real$ is a linear numerical flux such that $\Fl(\aIh u)$ approximates $-\int_F \diff\GRAD u\SCAL\normal_{TF}$.

The following general estimate is a direct consequence of Theorem \ref{a:th.est.var}.

\begin{theorem}[Energy estimate for FV methods]\label{th:fv.ener.est}
Assume that the fluxes $(\Fl)_{T\in\Th,\,F\in\Fh[T]}$ satisfy the following coercivity property, for some $\gamma>0$:
For all $v_h\in\aXh$,
\begin{equation}\label{fv:coer}
	\sum_{T\in\Th}\sum_{F\in\Fh[T]}\Fl(v_h)(v_T-v_F)\ge \gamma\norm[1,\Th]{v_h}^2.
\end{equation}
Then, if the solution $u$ to \eqref{eq:weak} belongs to $C(\overline{\Omega})\cap H^2(\Th)$, denoting by $u_h$ the solution to the FV scheme \eqref{scheme:fv}, it holds
\begin{equation}\label{eq:fv.ener.est}
\norm[1,\Th]{u_h-\aIh u}\le \gamma^{-1}\left(\sum_{T\in\Th}\underline{\lambda}_T^{-1}\sum_{F\in\Fh[T]}\frac{d_{TF}}{|F|}\left[\int_F \diff[T]\GRAD u_{|T}\SCAL\normal_{TF}+\Fl(\aIh u)\right]^2\right)^{\frac12}.
\end{equation}
\end{theorem}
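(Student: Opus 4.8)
The idea is to identify the finite volume scheme \eqref{scheme:fv} with an instance of the abstract discrete problem \eqref{a:pro.h} and then to invoke the ``third Strang lemma'', Theorem \ref{a:th.est.var}. The natural choice is to take $\aYh\coloneq\aXh$, both equipped with the norm $\norm[1,\Th]{{\cdot}}$ of \eqref{fv:norm.aXh}, and to set
\[
\abilh(w_h,v_h)\coloneq\sum_{T\in\Th}\sum_{F\in\Fh[T]}\Fl(w_h)(v_T-v_F),
\qquad
\alinh(v_h)\coloneq\sum_{T\in\Th}\Big(\int_T f\Big)\,v_T.
\]
To see that \eqref{a:pro.h} with these data is equivalent to \eqref{scheme:fv}, one tests the identity $\abilh(u_h,v_h)=\alinh(v_h)$ against the canonical basis vectors of $\aXh$: the vector carrying a single cell value $v_T=1$ returns the balance equation \eqref{fv:balance}, while the vector carrying a single internal-face value $v_F=1$ (the boundary-face components being frozen to $0$, and recalling that $\abilh(u_h,{\cdot})$ gathers $-(\Fl(u_h)+\Fl[T'F](u_h))$ on that face) returns the conservativity equation \eqref{fv:conservativity}. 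Hence the solution $u_h$ to \eqref{scheme:fv} is exactly the solution to \eqref{a:pro.h}.

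\textbf{Stability and application of the abstract estimate.} Since $\aXh=\aYh$ here, the coercivity hypothesis \eqref{fv:coer} reads precisely $\abilh(v_h,v_h)\ge\gamma\norm[1,\Th]{v_h}^2$, so by Remark \ref{rem:coer} the form $\abilh$ is inf--sup stable with $\coer=\gamma$. The interpolant $\aIh u$ is well defined because $u\in C(\overline\Omega)$ vanishes on $\partial\Omega$, and Theorem \ref{a:th.est.var} yields $\norm[1,\Th]{u_h-\aIh u}\le\gamma^{-1}\norm[\aXh^\star]{\Cerr{u}{\cdot}}$. It remains to bound the variational consistency error $\Cerr{u}{\cdot}=\alinh(\cdot)-\abilh(\aIh u,\cdot)$.

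\textbf{Rewriting the consistency error through flux increments.} Using $f=-\DIV(\diff\GRAD u)$ and $u\in H^2(\Th)$ (so that $\diff[T]\GRAD u_{|T}\in\Hdiv[T]$ on each $T$), the divergence theorem gives $\int_T f=-\sum_{F\in\Fh[T]}\int_F\diff[T]\GRAD u_{|T}\SCAL\normal_{TF}$. Set $G_{TF}\coloneq\int_F\diff[T]\GRAD u_{|T}\SCAL\normal_{TF}$. Since $f\in L^2(\Omega)$ forces $\diff\GRAD u\in\Hdiv$, the normal trace is single-valued across interfaces, whence $G_{TF}+G_{T'F}=0$ for every $F\in\Fhi$ with $\Th[F]=\{T,T'\}$; combined with $v_F=0$ on $\Fhb$, the same face-by-face reorganisation already used for $\abilh$ shows $\sum_{T\in\Th}\sum_{F\in\Fh[T]}G_{TF}v_F=0$, hence $\sum_{T}\sum_{F\in\Fh[T]}(-G_{TF})v_T=\sum_{T}\sum_{F\in\Fh[T]}(-G_{TF})(v_T-v_F)$. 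Gathering terms, one obtains the compact formula
\[
\Cerr{u}{v_h}=-\sum_{T\in\Th}\sum_{F\in\Fh[T]}\Big[\int_F\diff[T]\GRAD u_{|T}\SCAL\normal_{TF}+\Fl(\aIh u)\Big](v_T-v_F).
\]

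\textbf{Conclusion, and the main point.} Finally I would split each factor as $|v_T-v_F|=\big(\tfrac{d_{TF}}{\underline{\lambda}_T|F|}\big)^{\frac12}\big(\tfrac{\underline{\lambda}_T|F|}{d_{TF}}\big)^{\frac12}|v_T-v_F|$ and apply the Cauchy--Schwarz inequality on the double sum over $(T,F)$; the factor collecting $\tfrac{\underline{\lambda}_T|F|}{d_{TF}}(v_T-v_F)^2$ reassembles $\norm[1,\Th]{v_h}^2$ by \eqref{fv:norm.aXh}, which bounds $\norm[\aXh^\star]{\Cerr{u}{\cdot}}$ by the square-root term on the right-hand side of \eqref{eq:fv.ener.est}; multiplying by $\gamma^{-1}$ finishes the proof. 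I expect the only genuinely delicate point to be this bookkeeping: choosing the correct test space and forms, and---the crux---exploiting that the exact diffusive flux has a single-valued normal component across interfaces (a consequence of $f\in L^2(\Omega)$), since this is precisely what lets the consistency error be expressed through the increments $v_T-v_F$ alone and therefore be controlled by the discrete energy norm. The remaining steps are routine reindexing and a single Cauchy--Schwarz inequality.
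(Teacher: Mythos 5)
Your proposal is correct and follows essentially the same route as the paper: recast \eqref{scheme:fv} as the discrete weak form \eqref{fv:weak} via the face-by-face reorganisation and the conservativity of both the numerical and the exact fluxes, apply Theorem \ref{a:th.est.var} with $\coer=\gamma$ from \eqref{fv:coer}, express the consistency error through the increments $v_T-v_F$, and conclude by a single weighted Cauchy--Schwarz inequality. The only (harmless) difference is that you verify the equivalence of the weak and flux formulations in both directions by testing against basis vectors, whereas the paper only needs, and only proves, that the solution of \eqref{scheme:fv} satisfies \eqref{fv:weak}.
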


\begin{remark}[Consistency of the fluxes]
Estimate \eqref{eq:fv.ener.est} highlights the following well-known fact (see
\cite[Example 3.1]{Eymard.Gallouet.ea:00} or \cite[Remark 1.3]{Droniou:14}):
in FV methods, the appropriate consistency is that of the \emph{fluxes}, not of the discrete second order differential operator as in Finite Difference methods .
\end{remark}

\begin{proof}
We first recast problem \eqref{scheme:fv} under a discrete weak form.
For an arbitrary vector $v_h=((v_T)_{T\in\Th},(v_F)_{F\in\Fh})\in \aXh$, notice that, by the
flux conservativity \eqref{fv:conservativity} and the boundary condition on $v_h$,
\begin{equation}\label{fv:ener.0}
\begin{aligned}
  \sum_{T\in\Th}\sum_{F\in\Fh[T]}\Fl(u_h)v_F={}&
  \sum_{F\in\Fhi,\,\Th[F]=\{T,T'\}}\left(\Fl(u_h)+\Fl[T'F](u_h)\right)v_F\\
&+\sum_{F\in\Fhb,\,\Th[F]=\{T\}}\Fl(u_h)v_F=0,
\end{aligned}
\end{equation}
where the first equality comes from a re-arrangement of the sum over the faces.
Hence, multiplying \eqref{fv:balance} by $v_T$, summing over $T\in\Th$ and using the above relation, we see that $u_h$ satisfies
\begin{equation}\label{fv:weak}
	\underbrace{\sum_{T\in\Th}\sum_{F\in\Fh[T]}\Fl(u_h)(v_T-v_F)}_{\abilh(u_h,v_h)}=\underbrace{\sum_{T\in\Th}\int_T f v_T}_{\alinh(v_h)}\qquad\forall v_h\in\aXh.
\end{equation}
This problem has the form \eqref{a:pro.h} with $\aXh=\aYh$. The coercivity assumption \eqref{fv:coer} shows that $\abilh$ is coercive on $\aXh$, with coercivity constant $\gamma$. Hence, Theorem
\ref{a:th.est.var} yields
\begin{equation}\label{fv:ener.1}
\norm[1,\Th]{u_h-\aIh u}\le \gamma^{-1} \norm[\aXh^\star]{\Cerr{\aIh u}{\cdot}}.
\end{equation}
To estimate the primal consistency error, notice first that the relation $f=-\DIV(\diff\GRAD u)$ and the divergence formula in each cell give
\begin{align*}
\alinh(v_h)=\sum_{T\in\Th} \left(\int_T -\DIV(\diff\GRAD u)\right) v_T
={}&-\sum_{T\in\Th} \sum_{F\in\Fh[T]} \left(\int_F \diff[T]\GRAD u_{|T}\SCAL\normal_{TF}\right)v_T\\
={}&-\sum_{T\in\Th} \sum_{F\in\Fh[T]} \left(\int_F \diff[T]\GRAD u_{|T}\SCAL\normal_{TF}\right)(v_T-v_F)
\end{align*}
where we have used \eqref{fv:ener.0} with $\Fl(u_h)$ replaced with $\int_F \diff[T]\GRAD u_{|T}\SCAL\normal_{TF}$ (these exact fluxes also satisfy the conservativity relation \eqref{fv:conservativity} since $\DIV(\diff\GRAD u)\in L^2(\Omega)$). Hence,
\[
	\Cerr{\aIh u}{v_h}=-\sum_{T\in\Th}\sum_{F\in\Fh[T]}\left[\int_F \diff[T]\GRAD u_{|T}\SCAL\normal_{TF}+\Fl(\aIh u)\right](v_T-v_F).
\]
A Cauchy--Schwarz inequality and the definition \eqref{fv:norm.aXh} of the norm on $\aXh$ shows that $\norm[\aXh^\star]{\Cerr{\aIh u}{\cdot}}$ is bounded above by the bracketed term in the right-hand side of \eqref{eq:fv.ener.est}. Plugging into \eqref{fv:ener.1} this bound of the primal consistency error concludes the proof.
 \end{proof}

\subsubsection{Stable and linearly exact fluxes}

The estimate \eqref{eq:fv.ener.est} enables us to identify simple local properties on the fluxes,
under which an $\mathcal O(h)$ energy estimate can be established: local dependency, linear
exactness and boudedness.
Similar properties were proposed in \cite{Droniou.Eymard:17}, but without the concept of local dependency, which is essential for establishing a proper error estimate. Additionally, the analysis in \cite{Droniou.Eymard:17} was only sketched, and did not track the dependency of the estimates on the diffusion tensor $\diff$.

In this section, for $T\in\Th$ we let
$X_T\coloneq\left\{v=(v_T,(v_F)_{F\in\Fh[T]})\,:\,v_T\in\Real\,,\;v_F\in\Real\quad\forall F\in \Fh[T]\right\}$ be the local space of unknowns and, for $\phi\in C(\overline{T})$, $I_T\phi=(\phi(\vec{x}_T),(\phi(\vec{x}_F))_{F\in\Fh[T]})\in X_T$
defines the local interpolant of $\phi$.

\begin{theorem}[Energy error estimate for linearly exact FV methods]\label{th:fv.energy.linex}
Assume that the family of numerical fluxes $(\Fl)_{T\in\Th,\,F\in\Fh[T]}$ satisfies the coercivity property \eqref{fv:coer}, as well as the following
properties:
\begin{enumerate}[(i)]
\item \emph{Local dependency and linear exactness.} For all $v_h\in \aXh$, $T\in\Th$ and $F\in\Fh[T]$, $\Fl(v_h)$ depends only on the values $(v_{T},(v_{F})_{F\in\Fh[T]})\in X_T$, and if $L$ is an affine function on $\overline{T}$ then $\Fl(I_T L)=-\int_F \diff[T]\GRAD L\SCAL\normal_{TF}$.
\item \emph{Boundedness.} There is $C_b\ge 0$ such that, for all $T\in\Th$ and $v\in X_T$,
\begin{equation}\label{fv:bound}
	\sum_{F\in\Fh[T]}\frac{d_{TF}}{|F|}|\Fl(v)|^2\le C_b \overline{\lambda}_{T}^2\seminorm[1,T]{v}^2.
\end{equation}
\end{enumerate}
Let
\[
	\theta\ge \max_{T\in\Th}\left(\max_{F\in\Fh[T]}\frac{h_T}{d_{TF}}+{\rm Card}(\Fh[T])\right).
\]
Then, if the solution $u$ to \eqref{eq:weak} belongs to $C(\overline{\Omega})\cap H^2(\Th)$, denoting by $u_h$ the solution of \eqref{scheme:fv}, 
\begin{equation}\label{fv:est.energy}
\norm[1,\Th]{u_h-\aIh u}\lesssim \gamma^{-1}\left(\sum_{T\in\Th} \alpha_T\overline{\lambda}_T h_T^2
\seminorm[H^2(T)]{u}^2\right)^{\frac12},
\end{equation}
with hidden constant independent on $\diff$ and $h$, but depending on $\theta$ and $C_b$.
\end{theorem}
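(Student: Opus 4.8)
The plan is to start from the general energy estimate \eqref{eq:fv.ener.est} provided by Theorem \ref{th:fv.ener.est}, which reduces the matter to controlling the local flux consistency error
$
\mathfrak R_{T,F}(u)\coloneq\int_F \diff[T]\GRAD u_{|T}\SCAL\normal_{TF}+\Fl(\aIh u)
$
for each $T\in\Th$ and $F\in\Fh[T]$. The key device is linear exactness: letting $L_T$ be a suitable affine approximation of $u$ on $T$ (the degree-$1$ averaged Taylor polynomial of $u_{|T}$, as in Section \ref{sec:dproj}, so that $\seminorm[H^m(T)]{u-L_T}\lesssim h_T^{2-m}\seminorm[H^2(T)]{u}$ for $m\in\{0,1\}$ by Dupont--Scott), property (i) gives $\Fl(I_T L_T)=-\int_F \diff[T]\GRAD L_T\SCAL\normal_{TF}$. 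Since $\Fl$ depends only on the local values and is linear, we can write
$
\mathfrak R_{T,F}(u)=\int_F \diff[T]\GRAD(u_{|T}-L_T)\SCAL\normal_{TF}+\Fl\big(I_T(u_{|T}-L_T)\big),
$
so the whole consistency error is expressed through the remainder $\psi_T\coloneq u_{|T}-L_T$.

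Next I would bound the two pieces of $\mathfrak R_{T,F}(u)$ separately, aiming at the target
$
\sum_{F\in\Fh[T]}\frac{d_{TF}}{|F|}\mathfrak R_{T,F}(u)^2\lesssim \overline{\lambda}_T^2 h_T^2\seminorm[H^2(T)]{u}^2.
$
For the flux term, the boundedness assumption \eqref{fv:bound} gives directly
$
\sum_{F\in\Fh[T]}\frac{d_{TF}}{|F|}|\Fl(I_T\psi_T)|^2\le C_b\overline{\lambda}_T^2\seminorm[1,T]{I_T\psi_T}^2,
$
and $\seminorm[1,T]{I_T\psi_T}^2=\sum_{F}|F|d_{TF}\big(\tfrac{\psi_T(\vec x_T)-\psi_T(\vec x_F)}{d_{TF}}\big)^2$ must be controlled by $h_T^{-1}\norm[T]{\GRAD\psi_T}^2$ plus higher-order terms; this is where one uses continuous trace/Sobolev embeddings on $H^2(T)$ together with mesh regularity (point values of an $H^2$ function in dimension $\le 3$ are controlled, and $h_T/d_{TF}\le\theta$, ${\rm Card}(\Fh[T])\le\theta$ enter here), so that $\seminorm[1,T]{I_T\psi_T}^2\lesssim \seminorm[H^2(T)]{u}^2 h_T^{?}$ — more precisely one expects $\seminorm[1,T]{I_T\psi_T}\lesssim h_T\seminorm[H^2(T)]{u}$ after scaling. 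For the integral term, a continuous trace inequality on $F\subset\partial T$ gives
$
\norm[F]{\diff[T]^{1/2}\GRAD\psi_T}^2\lesssim h_T^{-1}\norm[T]{\diff[T]^{1/2}\GRAD\psi_T}^2+h_T\seminorm[H^1(T)^d]{\diff[T]^{1/2}\GRAD\psi_T}^2\lesssim \overline{\lambda}_T h_T\seminorm[H^2(T)]{u}^2,
$
and then $\big(\int_F \diff[T]\GRAD\psi_T\SCAL\normal_{TF}\big)^2\le |F|\,\overline{\lambda}_T\norm[F]{\diff[T]^{1/2}\GRAD\psi_T}^2$, so multiplying by $d_{TF}/|F|$ and summing over $F\in\Fh[T]$ (using ${\rm Card}(\Fh[T])\le\theta$) produces $\lesssim \overline{\lambda}_T^2 h_T^2\seminorm[H^2(T)]{u}^2$.

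Finally I would assemble: plugging both bounds into \eqref{eq:fv.ener.est} and using $\underline{\lambda}_T^{-1}\overline{\lambda}_T^2=\alpha_T\overline{\lambda}_T$ from the definition \eqref{def:alphaT} of $\alpha_T$ yields exactly \eqref{fv:est.energy}, with the hidden constant depending only on the mesh regularity factor, on $\theta$, and on $C_b$. The main obstacle I anticipate is the careful treatment of the interpolation seminorm $\seminorm[1,T]{I_T\psi_T}$: it involves point values $\psi_T(\vec x_T),\psi_T(\vec x_F)$ of the Taylor remainder, which are only meaningful because $H^2(T)\hookrightarrow C(\overline T)$ in dimension $\le 3$ and must be estimated by a scaling argument on a reference element combined with the Dupont--Scott bounds on $\psi_T$, all while keeping the dependence on $d_{TF}$ and the number of faces explicit through $\theta$; the diffusion-dependent trace inequality for the integral term is comparatively routine, being the same mechanism already used to derive \eqref{dproj:trace} from \eqref{dproj:approx}. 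One should also be mindful that $L_T$ must be a fixed affine function per cell (not face-dependent), so that linear exactness is applied consistently for all $F\in\Fh[T]$ simultaneously.
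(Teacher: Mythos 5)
Your proposal is correct and follows essentially the same route as the paper's proof: reduce to the flux consistency error via Theorem \ref{th:fv.ener.est}, subtract a cell-wise affine approximant $L_T$ of $u$ using linear exactness and locality, bound the exact flux of the remainder by a trace argument and its discrete flux by \eqref{fv:bound} together with sup-norm control of the remainder (which is exactly where the paper invokes star-shapedness and the $H^2\hookrightarrow C^0$ embedding for $d\le 3$), and conclude with $\underline{\lambda}_T^{-1}\overline{\lambda}_T^2=\alpha_T\overline{\lambda}_T$. The only cosmetic differences are that the paper takes $L_T$ from a specific lemma providing the bounds \eqref{est:RT} directly rather than from the averaged Taylor polynomial, and estimates the face integral of $|\GRAD R_T|$ by a mean-value trace lemma plus Jensen rather than by your continuous trace inequality plus Cauchy--Schwarz.
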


\begin{proof}
Fix $T\in\Th$ and notice that, by definition of $\theta$ and \cite[Lemma B.1]{Droniou.Eymard.ea:18}, there is a ball of radius $\gtrsim h_T$ such that $T$ is star-shaped with respect to all points in this ball. Hence, \cite[Lemma 7.61]{Droniou.Eymard.ea:18} yields the existence of a linear function $L_T$ such that, setting $R_T=u_{|T}-L_T$,
\begin{equation}\label{est:RT}
\sup_{\overline{T}}|R_T|\lesssim h_T^{2-\frac{d}{2}}\seminorm[H^2(T)]{u}\quad
\mbox{ and }\quad \norm[T]{\GRAD R_T}\lesssim h_T\seminorm[H^2(T)]{u}.
\end{equation}
Subtracting $L_T$ and using the linear exactness of the fluxes, we have
\begin{align*}
\term_{TF}\coloneq{}\left|\int_F \diff[T]\GRAD u_{|T}\SCAL\normal_{TF}+\Fl(I_T u_{|T})\right|
={}& \left|\int_F \diff[T]\GRAD R_T\SCAL\normal_{TF}+\Fl(I_T R_T)\right|\\
\le{}& \overline{\lambda}_T \int_F |\GRAD R_T| + |\Fl(I_TR_T)|.
\end{align*}
Hence, by boundedness of the fluxes,
\[
\sum_{F\in\Fh[T]}\frac{d_{TF}}{|F|}\term_{TF}^2\le 2
\overline{\lambda}_T^2\sum_{F\in\Fh[T]}d_{TF}|F| \left(\frac{1}{|F|}\int_F |\GRAD R_T|\right)^2
+2C_b\overline{\lambda}_T^2\seminorm[1,T]{I_T R_T}^2=:
\term_T^{(1)}+\term_T^{(2)}.
\]
The definition of $\term_{TF}$ and Theorem \ref{th:fv.ener.est} show that
\begin{equation}\label{fv:linex.1}
\norm[1,\Th]{u_h-\aIh u}\le \gamma^{-1}\left(\sum_{T\in\Th}\underline{\lambda}_T^{-1}(\term_T^{(1)}+\term_T^{(2)})\right)^{\frac12}.
\end{equation}
To estimate $\term_T^{(1)}$, we apply \cite[Lemma B.6]{Droniou.Eymard.ea:18} to $|\GRAD R_T|\in H^1(T)$ to see that
\[
\left(\frac{1}{|F|}\int_F |\GRAD R_T|\right)^2\lesssim 
\left(\frac{1}{|T|}\int_T |\GRAD R_T|\right)^2 + \frac{h_T}{|F|}\seminorm[H^2(T)]{R_T}^2.
\]
Since $L_T$ is linear, $\seminorm[H^2(T)]{R_T}=\seminorm[H^2(T)]{u-L_T}=\seminorm[H^2(T)]{u}$. Hence,
the Jensen inequality on the first term in the right-hand side and \eqref{est:RT} yield
\[
\left(\frac{1}{|F|}\int_F |\GRAD R_T|\right)^2\lesssim 
\left(\frac{h_T^2}{|T|}+\frac{h_T}{|F|}\right)\seminorm[H^2(T)]{u}^2.
\]
Plugging this bound into the definition of $\term_T^{(1)}$, using $d_{TF}\le h_T$, and using $\sum_{F\in\Fh[T]}d_{TF}|F|=d|T|$ (see \cite[Lemma B.2]{Droniou.Eymard.ea:18}), we infer
\begin{equation}\label{fv:linex.2}
\term_T^{(1)}\lesssim \overline{\lambda}_T^2 h_T^2 \seminorm[H^2(T)]{u}^2.
\end{equation}
For $\term_T^{(2)}$, we recall the definition of $\seminorm[1,T]{{\cdot}}$, use the first bound in \eqref{est:RT}, and the estimates $\frac{1}{d_{TF}}\le \frac{\theta}{h_T}$ and $|F|\lesssim h_T^{d-1}$
to write
\[
\term_T^{(2)} \lesssim \overline{\lambda}_T^2 \sum_{F\in\Fh[T]}\frac{|F|}{d_{TF}} (|R_T(\vec{x}_T)|^2+|R_T(\vec{x}_F)|^2)
\lesssim \overline{\lambda}_T^2 h_T^{2}\seminorm[H^2(T)]{u}^2.
\]
Using this estimate together with \eqref{fv:linex.2} into \eqref{fv:linex.1} concludes the proof. 
\end{proof}

We now give two classical examples of FV methods that satisfy the coercivity, linear exactness and stability properties, and to which Theorem \ref{th:fv.energy.linex} thus applies.
Error estimates for these two methods can be found in the literature (see e.g.\ \cite{Eymard.Gallouet.ea:00,Droniou.Eymard.ea:10}) but, to our best knowledge, contrary to \eqref{fv:est.energy}, none of the currently available estimate has explicit dependency on the local anisotropy ratio and diffusion magnitude.

\begin{example}[Two-Point Flux Approximation (TPFA) method]
The TPFA scheme \cite{Eymard.Gallouet.ea:00} requires meshes with a specific geometric property:
the points $(\vec{x}_T)_{T\in\Th}$ and $(\vec{x}_F)_{F\in\Fh}$ must be chosen such that, for
any $T\in\Th$ and $F\in\Fh[T]$, $\vec{x}_T\vec{x}_F$ is parallel to $\diff[T]\normal_{TF}$.
The fluxes are then defined by: for $v_h\in\aXh$,
\begin{equation}\label{def:TPFA}
\Fl(v_h)=|F|\,|\diff[T]\normal_{TF}|\frac{v_T-v_F}{|\vec{x}_T-\vec{x}_F|}.
\end{equation}
The assumption on the points show that $\vec{x}_F-\vec{x}_T=\alpha_{TF}\diff[T]\normal_{TF}$ with $\alpha_{TF}>0$ (because $\diff[T]$ is symmetric positive definite and $(\vec{x}_F-\vec{x}_T)\SCAL
\normal_{TF}>0$). Taking the norm on both sides yields $\alpha_{TF}=\frac{|\vec{x}_T-\vec{x}_F|}{|\diff[T]\normal_{TF}|}$. Hence, if $L$ is a linear function, 
\[
L(\vec{x}_T)-L(\vec{x}_F)=\GRAD L\SCAL (\vec{x}_T-\vec{x}_F)
=-\frac{|\vec{x}_T-\vec{x}_F|}{|\diff[T]\normal_{TF}|} \GRAD L\SCAL \diff[T]\normal_{TF}
\]
and thus $\Fl(I_T L)=-|F|\diff[T]\GRAD L\SCAL\normal_{TF}$, showing that the flux
is linearly exact. Fixing $C_b\ge \max_{T\in\Th}\max_{F\in\Fh[T]}\frac{d_{TF}^2}{|\vec{x}_T-\vec{x}_F|^2}$,
the boundedness property \eqref{fv:bound} is a straightforward consequence of \eqref{def:TPFA}.
Since $|\diff[T]\normal_{TF}|\ge \underline{\lambda}_T$, the coercivity \eqref{fv:coer} also easily 
follows from \eqref{def:TPFA}, provided that $\gamma>0$ is chosen such that
$\gamma\le \min_{T\in\Th}\min_{F\in\Fh[T]}\frac{d_{TF}}{|\vec{x}_T-\vec{x}_F|}$.
\end{example}

\begin{example}[Mixed Finite Volume (MFV) method]
The MFV method is the FV presentation of the Hybrid Mimetic Mixed (HMM) method \cite{Droniou.Eymard.ea:10}.
Here, $(\vec{x}_T)_{T\in\Th}$ can be any points in the cells, but $(\vec{x}_F)_{F\in\Fh}=(\overline{\vec{x}}_F)_{F\in\Fh}$ are taken as the centers of mass of the faces. To construct the MFV method \cite{Droniou.Eymard:06,Droniou.Eymard.ea:10}, we start by reconstructing, from known fluxes, a local gradient. For $T\in\Th$, if $\flgen_T=(\flgen_{TF})_{F\in\Fh[T]}$ is a family of real numbers (representing fluxes through the faces of $T$), define the following discrete gradient and boundary residuals:
\begin{align*}
\vec{G}_T(\flgen_T)\coloneq{}&-\frac{1}{|T|}\diff[T]^{-1}\sum_{F\in\Fh[T]}\flgen_{TF} (\overline{\vec{x}}_F-\vec{x}_T),\\
\mathcal R_{TF}(\flgen_T)\coloneq{}& \flgen_{TF}+|F|\diff[T]\vec{G}_T(\flgen_T)\SCAL\normal_{TF}\quad\forall F\in\Fh[T].
\end{align*}
Then, fixing a symmetric positive definite matrix $\mathbb{B}^T=(\mathbb{B}^T_{FF'})_{F,F'\in\Fh[T]}\in
\Real^{\Fh[T]\times \Fh[T]}$, the MFV fluxes $(\Fl(v_h))_{F\in\Fh[T]}$ are defined, for $v_h\in\aXh$, as the unique solution of the following problem
\begin{align}
&\forall \flgen_T=(\flgen_{TF})_{F\in\Fh[T]}\in\Real^{\Fh[T]},\nonumber\\
&|T|\diff[T]\vec{G}_T(\Fl(v_h))\SCAL\vec{G}_T(\flgen_T)+\sum_{F,F'\in\Fh[T]}\mathbb{B}^T_{FF'}
\mathcal R_{TF}(\Fl(v_h))\mathcal R_{TF}(\flgen_T)=\sum_{F\in\Fh[T]}(v_T-v_F)\flgen_{TF}.
\label{mfv:def.fl}
\end{align}
Assume that $L$ is a linear map and that $v_h=\aIh L$. Let $\mathfrak{g}_T=(-|F|\diff[T]\GRAD L\SCAL\normal_{TF})_{F\in\Fh[T]}$ be the exact fluxes of $L$. The divergence theorem shows that $\vec{G}_T(\mathfrak{g}_T)=\GRAD L$ and thus $\mathcal R_{TF}(\mathfrak{g}_T)=0$. Moreover, for all $\flgen_T\in\Real^{\Fh[T]}$, 
\[
\sum_{F\in\Fh[T]}(v_T-v_F)\flgen_{TF}=\sum_{F\in\Fh[T]}\GRAD L\SCAL (\vec{x}_T-\overline{\vec{x}}_F)\flgen_{TF}=\GRAD L \SCAL |T|\diff[T]\vec{G}_T(\flgen_T).
\]
Hence, \eqref{mfv:def.fl} holds with $\mathfrak{g}_{T}$ instead of $(\Fl(v_h))_{F\in\Fh[T]}$, which shows that these two families of fluxes are equal, and thus that the fluxes are linearly exact. The stability and coercivity of the method follow easily from \eqref{mfv:def.fl}, under natural assumption on the matrices $\mathbb{B}^T$, see \cite[Section 4.1]{Droniou.Eymard.ea:10} or \cite[Chapter 13]{Droniou.Eymard.ea:18}.
\end{example}

\begin{remark}[$L^2$ estimates and super-convergence]\label{rem:L2.FV}
Define $r_h:\aXh\to L^2(\Omega)$ by $(r_h v_h)_{|T}=v_T$ for all $v_h\in\aXh$ and $T\in\Th$.
A discrete Poincar\'e inequality \cite[Remark B.16]{Droniou.Eymard.ea:18} yields $\norm{r_h v_h}\le C \norm[1,\Th]{v_h}$, with $C$ depending only on $\eta\ge \max_{F\in\Fhi,\,\Th[F]=\{T,T'\}}\left(\frac{d_{TF}}{d_{T'F}}+\frac{d_{T'F}}{d_{TF}}\right)$.
Hence, \eqref{eq:fv.ener.est} and \eqref{fv:est.energy} directly give estimates on $\norm{r_h u_h - u_{\Th}}$, where $u_{\Th}=r_h\aIh u$ is the piecewise constant function defined by $(u_{\Th})_{|T}=u(\vec{x}_T)$ for all $T\in\Th$.

One can naturally wonder whether Theorem \ref{th:abstract.weak} could yield better error estimates on
this $L^2$-norm. The answer is no in general. Numerical test 2 in \cite{Droniou.Nataraj:18} shows that, for the MFV scheme, the $L^2$-norm error can, in some cases, converge at the same rate as the discrete energy error (that is, $\mathcal O(h)$). Actually, for the MFV and TPFA schemes at least, the super-convergence properties in $L^2$-norm seem to be related to the proximity, locally and on average, of the interpolation points $(\vec{x}_T)_{T\in\Th}$ and the centers of mass of the cells \cite[Theorem 5.3]{Droniou.Nataraj:18}.
\end{remark}

\subsubsection{Cell-centred methods, application to Multi-Point Flux Approximations}\label{sec:fv.mpfa}

The theory in Section \ref{sec:fv.general} can easily be adapted to purely cell-centred methods.
For such methods, the space of unknowns is
\[
\aXh^c\coloneq \Poly{0}(\Th)=\left\{v_h=(v_T)_{T\in\Th}\st v_T\in\Real\right\},
\]
with discrete $H^1_0$ norm defined by
\[
\norm[1,\Th,c]{v_h}\coloneq \left(\sum_{F\in\Fh}\underline{\lambda}_F |F|d_F\left(\frac{v_T-v_{T'}}{d_F}\right)^2\right)^{\frac12},
\]
with the notations
\[
\begin{array}{ll}
\forall F\in\Fhi\st \underline{\lambda}_F=\min(\underline{\lambda}_T,\underline{\lambda}_{T'})\mbox{ and } d_F=d_{TF}+d_{T'F}\,,\;\mbox{ where }\{T,T'\}=\Th[F],\\
\forall F\in\Fhb\st \underline{\lambda}_F=\underline{\lambda}_T\,,\;d_F=d_{TF}\mbox{ and }v_{T'}=0,\;\mbox{ where }\{T\}=\Th[F].
\end{array}
\]
The interpolant of a continuous function $u$ is $\aIh^c u\coloneq(u(\vec{x}_T))_{T\in\Th}\in\aXh^c$. To write a cell-centred FV method, linear fluxes $\Fl^c:\aXh^c\to\Real$ are first chosen such that
\begin{equation}\label{fv:cc:cons}
\Fl^c+\Fl[T'F]^c=0\mbox{ on $\aXh^c$},\quad \forall F\in\Fhi\mbox{ with $\Th[F]=\{T,T'\}$}.
\end{equation}
Then the FV scheme reads: Find $u_h\in\aXh^c$ such that
\begin{equation}\label{fv:balance.c}
\sum_{F\in\Fh[T]}\Fl^c(u_h) = \int_T f\qquad\forall T\in\Th.
\end{equation}

The following result is the equivalent for cell-centred methods of Theorem \ref{th:fv.ener.est}.

\begin{theorem}[Energy estimate for cell-centred FV methods]\label{th:fv.ener.est.cc}
Assume that the fluxes $(\Fl^c)_{T\in\Th,\,F\in\Fh[T]}$ satisfy the following coercivity property, for some $\gamma>0$:
For all $v_h\in\aXh^c$,
\begin{equation}\label{fv:coer.cc}
	\sum_{F\in\Fhi,\,\Th[F]=\{T,T'\}}\Fl^c(v_h)(v_T-v_{T'})+
	\sum_{F\in\Fhb,\,\Th[F]=\{T\}}\Fl^c(v_h)v_T\ge \gamma\norm[1,\Th,c]{v_h}^2.
\end{equation}
Then, if the solution $u$ to \eqref{eq:weak} belongs to $C(\overline{\Omega})\cap H^2(\Th)$, denoting by $u_h$ the solution to the FV scheme \eqref{fv:balance.c}, it holds
\begin{equation}\label{eq:fv.ener.est.cc}
\norm[1,\Th,c]{u_h-\aIh^c u}\le \gamma^{-1}\left(\sum_{F\in\Fh}\underline{\lambda}_F^{-1}\frac{d_{F}}{|F|}\left[\int_F \diff[T]\GRAD u_{|T}\SCAL\normal_{TF}+\Fl^c(\aIh^c u)\right]^2\right)^{\frac12}
\end{equation}
where, for $F\in\Fh$, $T$ is an arbitrary cell in $\Th[F]$.
\end{theorem}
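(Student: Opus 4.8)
The plan is to mimic the proof of Theorem \ref{th:fv.ener.est}, adapting the face-wise bookkeeping from the cell-and-face setting to the purely cell-centred one. First I would recast the scheme \eqref{fv:balance.c} in discrete weak form: multiplying \eqref{fv:balance.c} by $v_T$, summing over $T\in\Th$, and re-arranging the double sum $\sum_{T\in\Th}\sum_{F\in\Fh[T]}\Fl^c(u_h)v_T$ face by face while using the flux conservativity \eqref{fv:cc:cons} (so that an interior face $F$ with $\Th[F]=\{T,T'\}$ contributes $\Fl^c(u_h)(v_T-v_{T'})$ and a boundary face contributes $\Fl^c(u_h)v_T$), one obtains $\abilh(u_h,v_h)=\alinh(v_h)$ for all $v_h\in\aXh^c$, where $\abilh(u_h,v_h)\coloneq\sum_{F\in\Fhi,\Th[F]=\{T,T'\}}\Fl^c(u_h)(v_T-v_{T'})+\sum_{F\in\Fhb,\Th[F]=\{T\}}\Fl^c(u_h)v_T$ (so that $\abilh(v_h,v_h)$ is the left-hand side of \eqref{fv:coer.cc}) and $\alinh(v_h)\coloneq\sum_{T\in\Th}\int_T f\,v_T$. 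This puts the method in the form \eqref{a:pro.h} with $\aXh=\aYh=\aXh^c$; the hypothesis \eqref{fv:coer.cc} then says $\abilh$ is coercive with constant $\gamma$, so Theorem \ref{a:th.est.var} yields $\norm[1,\Th,c]{u_h-\aIh^c u}\le\gamma^{-1}\norm[(\aXh^c)^\star]{\Cerr{\aIh^c u}{\cdot}}$.

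The second step is to express the consistency error in terms of the flux residuals. Using $f=-\DIV(\diff\GRAD u)$ and the divergence formula in each cell, $\alinh(v_h)=-\sum_{T\in\Th}\sum_{F\in\Fh[T]}\big(\int_F\diff[T]\GRAD u_{|T}\SCAL\normal_{TF}\big)v_T$ (the face traces being well defined thanks to $u\in H^2(\Th)$); since $\DIV(\diff\GRAD u)\in L^2(\Omega)$, the exact normal fluxes $\int_F\diff[T]\GRAD u_{|T}\SCAL\normal_{TF}$ satisfy the same conservativity relation as in \eqref{fv:cc:cons}, so the same face-wise re-arrangement applies and gives $\alinh(v_h)=-\sum_{F\in\Fhi,\Th[F]=\{T,T'\}}\big(\int_F\diff[T]\GRAD u_{|T}\SCAL\normal_{TF}\big)(v_T-v_{T'})-\sum_{F\in\Fhb,\Th[F]=\{T\}}\big(\int_F\diff[T]\GRAD u_{|T}\SCAL\normal_{TF}\big)v_T$. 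Subtracting $\abilh(\aIh^c u,v_h)$ then shows, with the conventions of the statement (for $F\in\Fh$, $T\in\Th[F]$ arbitrary, $T'$ the other cell of $\Th[F]$ if $F$ is interior and $v_{T'}\coloneq0$ if $F\in\Fhb$),
\[
\Cerr{\aIh^c u}{v_h}=-\sum_{F\in\Fh}\Big[\int_F\diff[T]\GRAD u_{|T}\SCAL\normal_{TF}+\Fl^c(\aIh^c u)\Big](v_T-v_{T'}).
\]
Here one checks that on an interior face the bracket times $(v_T-v_{T'})$ is unchanged when $T$ and $T'$ are swapped, since the swap flips the sign of both factors (using \eqref{fv:cc:cons} and the conservativity of the exact fluxes), which is precisely what legitimises the ``arbitrary $T\in\Th[F]$'' convention.

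The last step is a weighted Cauchy--Schwarz inequality over $\Fh$: writing $|v_T-v_{T'}|=d_F\cdot\frac{|v_T-v_{T'}|}{d_F}$ and each summand as $\big(\underline{\lambda}_F^{-1/2}(d_F/|F|)^{1/2}|{\cdot}|\big)\cdot\big(\underline{\lambda}_F^{1/2}(|F|d_F)^{1/2}\frac{|v_T-v_{T'}|}{d_F}\big)$, applying Cauchy--Schwarz on the sum over $F\in\Fh$, and recognising from the definition of $\norm[1,\Th,c]{{\cdot}}$ that the squared second factors sum to $\norm[1,\Th,c]{v_h}^2$, one gets that $\norm[(\aXh^c)^\star]{\Cerr{\aIh^c u}{\cdot}}$ is bounded by the bracketed term on the right-hand side of \eqref{eq:fv.ener.est.cc}. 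Combined with the estimate from Theorem \ref{a:th.est.var}, this proves \eqref{eq:fv.ener.est.cc}. I do not expect any genuine obstacle: the argument is essentially a transcription of the proof of Theorem \ref{th:fv.ener.est}, and the only point needing a little care is the verification that both the numerical and the exact normal fluxes are conservative across interior faces, so that the face-wise rearrangement is compatible with the ``arbitrary $T$'' convention.
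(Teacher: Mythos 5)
Your proposal is correct and follows exactly the paper's route: recast \eqref{fv:balance.c} in the weak form $\abilh(u_h,v_h)=\alinh(v_h)$ via the face-wise rearrangement and \eqref{fv:cc:cons}, invoke Theorem \ref{a:th.est.var} with the coercivity \eqref{fv:coer.cc}, and bound the consistency error by a weighted Cauchy--Schwarz inequality as in the proof of Theorem \ref{th:fv.ener.est}. The paper leaves the second half of the argument implicit (``in a similar way as \eqref{eq:fv.ener.est}''), and your write-up simply makes those details, including the swap-invariance justifying the ``arbitrary $T\in\Th[F]$'' convention, explicit.
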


\begin{proof}
For all $v_h\in \aXh^c$, gathering the sum by faces and using the flux conservativity \eqref{fv:cc:cons} shows that
\begin{align*}
\alinh(v_h)\coloneq\sum_{T\in\Th} \int_T f v_T{}&=\sum_{T\in\Th} \sum_{F\in\Fh[T]}\Fl^c(u_h)v_T\\
	{}&=\sum_{F\in\Fhi,\,\Th[F]=\{T,T'\}}\Fl^c(u_h)v_T+\Fl[T'F]^c(u_h)v_{T'}+
	\sum_{F\in\Fhb,\,\Th[F]=\{T\}}\Fl^c(u_h)v_T\\
	&=\sum_{F\in\Fhi,\,\Th[F]=\{T,T'\}}\Fl^c(u_h)(v_T-v_{T'})+
	\sum_{F\in\Fhb,\,\Th[F]=\{T\}}\Fl^c(u_h)v_T\\
	&=:\abilh(u_h,v_h).
\end{align*}
Hence, the cell-centred Finite Volume scheme \eqref{fv:balance.c} has been recast in the
framework of Section \ref{sec:abstract.analysis}. The error estimate \eqref{eq:fv.ener.est.cc}
then follows Theorem \ref{a:th.est.var}, in a similar way as the error estimate
\eqref{eq:fv.ener.est} for cell- and face-centred schemes.\end{proof}

As for cell- and face-centred FV methods, we could deduce from this theorem an error estimate
for schemes with local, bounded and linearly exact fluxes. However, some important FV methods are not linearly exact if the diffusion tensor $\diff$ is discontinuous. This is the case, for example, of Multi-Point Flux Approximation (MPFA) methods \cite{Edwards.Rogers:94,Aavatsmark.Barkve.ea:98,Aavatsmark:02}. To properly account for the diffusion jump, the fluxes are constructed to be exact on interpolants of \emph{piecewise} linear functions that have continuous fluxes (and, thus, usually discontinuous gradients to compensate for the discontinuity of the diffusion tensor involved in the fluxes). Theorem \ref{th:fv.ener.est.cc} however still yields energy error estimate for such methods. To illustrate this, we consider here the case of two Multi-Point Flux Approximation methods: the MPFA-L and MPFA-G methods.

\medskip

Let us first briefly present these two schemes (see \cite{Aavatsmark.Eigestad.ea:08,Agelas.Di-Pietro.ea:10} for the details). Here, $(\vec{x}_T)_{T\in\Th}$ are still free points in the cells, but $(\vec{x}_F)_{F\in\Fh}$ are the centers of mass of the faces. A \emph{group of faces} is any set of $d$ faces that belong to the same cell and share the same vertex, see Fig. \ref{fig-groups}. For each such group $G$ we fix a cell $T_G$ whose boundary contains all the faces in $G$; in most cases there is actually only one such cell, but for some non-convex cells there might situations with two possible choices for $T_G$ -- in which case we arbitrarily fix one choice (see Fig. \ref{fig-groups}, right).

\begin{figure}[!h]
\begin{center}
\input{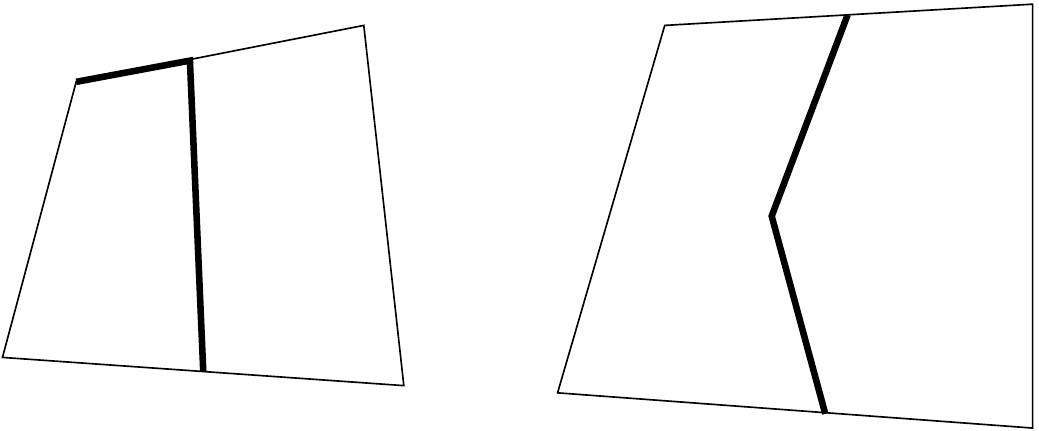_t}
\caption{Two examples of groups of face (in bold) containing one particular face $F$. Left: unique choice for $T_G$; right: two
possible choices for $T_G$, one has been arbitrarily made.}
\label{fig-groups}
\end{center}
\end{figure}

The fluxes of $v_h\in \aXh^c$ are constructed via the notion of \emph{group gradients}. For a given group of faces $G$, let $\Th[G]$ be the set of cells that have at least one face in $G$. The group gradients
$\left\{(\GRAD_\disc v_h)_T^{G,F}\st T\in\Th[G]\,,\;F\in\Fh[T]\cap G\right\}\subset \Real^d$ are constructed by imposing the continuity of values and fluxes on all faces $F\in G$ of the piecewise linear functions having these gradients in each corresponding cells, and taking the values $(v_T)_{T\in\Th[G]}$ at $(\vec{x}_T)_{T\in\Th[G]}$. Additionally, for the cell $T_G$ previously selected, it is imposed that all group gradients are independent of the corresponding faces: $(\GRAD_\disc v_h)_{T_G}^{G,F}=(\GRAD_\disc v_h)_{T_G}^{G,F'}$ for all $F,F'\in \Fh[T]\cap G$. We denote by $(\GRAD_\disc v_h)_{T_G}^{G}$ the common value of all these group gradients associated with $T_G$, and it can be proved that this vector is a solution of the following linear system:
\begin{equation}
  \mathcal{A}_G (\GRAD_\disc v_h)_{T_G}^{G} = \mathcal{B}_G(v_h),
  \label{aG}
\end{equation}
where $\mathcal{A}_G\in\Real^{d \times d}$ is defined row-wise by 
\[
\mathcal{A}_G=
\begin{bmatrix}
  \left(
  \frac{\diff[T]\normal_{TF}\SCAL\normal_{TF}}{d_{T,F}}
  (\vec{x}_T-\vec{x}_{T_G})
  +\diff[T_G]\normal_{T_GF}+\diff[T]\normal_{TF}
  \right)_{F\in G\cap \Fhi}^t\\
  \left(
  \frac{\diff[T_G]\normal_{T_GF}\SCAL\normal_{T_GF}}{d_{T_G,F}}
  (\vec{x}_F-\vec{x}_{T_G})
  \right)_{F\in G\cap\Fhb}^t
\end{bmatrix},
\]
with $T$ the cell on the other side of $T_G$ with respect to $F$, and $\mathcal{B}_G(v)\in\Real^d$ is defined as
\[
\mathcal{B}_G(v_h)=
\begin{bmatrix}
  \left(
  \dfrac{\diff[T]\normal_{TF}\SCAL\normal_{TF}}{d_{T,F}}
  (v_T-v_{T_G})
  \right)_{F\in G\cap\Fhi}\\
  \left(
  \dfrac{\diff[T_G]\normal_{T_GF}\SCAL\normal_{T_GF}}{d_{T_G,F}}
  (-v_{T_G})
  \right)_{F\in G\cap \Fhb}
\end{bmatrix}.
\]
For a given face $F$, we denote by $\mathcal G_F$ the set of groups $G$ containing $F$ and such that $\mathcal A_G$ is invertible (it is assumed that $\mathcal G_F\not=\emptyset$ for all $F\in\Fh$).
The numerical fluxes are then defined as a convex combination of the fluxes corresponding to the group gradients: for $T\in\Th$, $F\in\Fh[T]$ and $v_h\in\aXh^c$,
\begin{equation}\label{fv:fluxes.mpfa}
\Fl^c(v_h)\coloneq \sum_{G\in\mathcal G_F}\theta_F^{G}\Fl^{c,G}(v_h)\quad\mbox{ with }\quad
\Fl^{c,G}(v_h)\coloneq -|F|\diff[T](\GRAD_\disc v_h)_T^{G,F}\SCAL\normal_{TF},
\end{equation}
where $(\theta_F^G)_{G\in\mathcal G_F}$ are the coefficients of the convex combination. The $L$-scheme corresponds to the case where, for each face, this convex combination has only one non-zero coefficient, chosen to maximise the monotonicity properties of the scheme. The $G$-scheme corresponds to a choice of coefficients that maximise the coercivity properties of the resulting scheme.

We now show that Theorem \ref{th:fv.ener.est.cc} yields the following error estimate. This estimate seems to be the first one for the MPFA-L and MPFA-G methods in the case of discontinuous permeability tensors; all previous estimates available in the literature have been derived under the assumption that $\diff\in C^1(\overline{\Omega})^{d\times d}$, see \cite{Droniou:14} and reference therein.

\begin{theorem}[Error estimate for the MPFA-L/G methods]
Let $\eta$ be such that
\[
\eta\ge\max_{T\in\Th,\,F\in\Fh[T]}\frac{h_T}{d_{TF}}\,,\quad
\eta\ge\max_{F\in\Fhi,\,\Th[F]=\{T,T'\}} \frac{d_{TF}}{d_{T'F}}
\quad\mbox{ and }\quad\eta\ge \max_{F\in\Fh}\sum_{G\in\mathcal G_F} \theta_F^G|\mathcal A_G^{-1}|,
\]
where $|\mathcal A_G^{-1}|$ is the induced Euclidean norm of $\mathcal A_G^{-1}$.
Assume that the solution $u$ to \eqref{eq:weak} belongs to $C(\overline{\Omega})$ and that
$u_{|\Omega_i}\in C^2(\overline{\Omega_i})$ for each $i\in\{1,\ldots,N_\Omega\}$.
Assume that the fluxes \eqref{fv:fluxes.mpfa} satisfy the coercivity property \eqref{fv:coer}, and
let $u_h$ be the solution to the MPFA-L/G scheme (that is, \eqref{fv:balance.c} with these fluxes).
Then
\[
\norm[1,\Th,c]{u_h-\aIh^c u}\lesssim \gamma^{-1} \norm[C^2]{u} h,
\]
where $\norm[C^2]{u}\coloneq \max_{i=1,\ldots,N_\Omega}\norm[C^2(\overline{\Omega_i})]{u}$
and the hidden constant in $\lesssim$ depends only on $\Omega$, $\eta$ and $\diff$.
\end{theorem}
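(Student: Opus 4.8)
\medskip

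The plan is to deduce the estimate from the cell-centred energy bound of Theorem~\ref{th:fv.ener.est.cc}, whose coercivity hypothesis is assumed here. It suffices to bound, for each $F\in\Fh$ and the cell $T\in\Th[F]$ appearing in \eqref{eq:fv.ener.est.cc}, the local flux consistency error
\[
\term_F\coloneq\int_F\diff[T]\GRAD u_{|T}\SCAL\normal_{TF}+\Fl^c(\aIh^c u)
\]
by $|\term_F|\lesssim|F|\,h\,\norm[C^2]{u}$, with hidden constant depending only on $\diff$ and $\eta$. Indeed, plugging such a bound into \eqref{eq:fv.ener.est.cc}, using $\underline{\lambda}_F\ge\min_{T\in\Th}\underline{\lambda}_T$ and $\sum_{F\in\Fh}d_F|F|\le\sum_{T\in\Th}\sum_{F\in\Fh[T]}d_{TF}|F|=d|\Omega|$ (see \cite[Lemma B.2]{Droniou.Eymard.ea:18}), yields $\norm[1,\Th,c]{u_h-\aIh^c u}\lesssim\gamma^{-1}h\norm[C^2]{u}$ with constant depending on $\Omega$, $\eta$ and $\diff$.

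\medskip

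To bound $\term_F$, expand $\Fl^c(\aIh^c u)=\sum_{G\in\mathcal G_F}\theta_F^G\,\Fl^{c,G}(\aIh^c u)$ and treat each group $G\ni F$ separately. Let $\vec{x}_V$ be the common vertex of the faces of $G$ and, for $T\in\Th[G]$ with $T\subset\Omega_i$, let $L^G_T\in\Poly{1}(T)$ be the first-order Taylor polynomial of $u_{|\Omega_i}$ at $\vec{x}_V$, namely $L^G_T(\vec{x})=u(\vec{x}_V)+\GRAD u_{|\Omega_i}(\vec{x}_V)\SCAL(\vec{x}-\vec{x}_V)$. The central point is that the piecewise-$\Poly{1}$ map $L^G=(L^G_T)_{T\in\Th[G]}$ is continuous and has continuous fluxes across every face of $G$. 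Flux continuity holds because $\diff\GRAD u\in\Hdiv$ (as $f\in L^2(\Omega)$ forces $\DIV(\diff\GRAD u)=-f\in L^2(\Omega)$), so the identity $\diff[T]\GRAD u_{|\Omega_i}\SCAL\normal=\diff[T']\GRAD u_{|\Omega_j}\SCAL\normal$ on every interior face $F'\in G$ holds, by continuity of the gradients up to $\overline{F'}$, at $\vec{x}_V$, which is exactly $\diff[T]\GRAD L^G_T\SCAL\normal_{TF'}+\diff[T']\GRAD L^G_{T'}\SCAL\normal_{T'F'}=0$. Value continuity holds because the traces of $u_{|\Omega_i}$ and $u_{|\Omega_j}$ on $F'$ coincide, so $\GRAD u_{|\Omega_i}(\vec{x}_V)-\GRAD u_{|\Omega_j}(\vec{x}_V)$ is orthogonal to the hyperplane of $F'$; since $\vec{x}_V$ lies on that hyperplane and $L^G_T(\vec{x}_V)=L^G_{T'}(\vec{x}_V)=u(\vec{x}_V)$, the affine maps $L^G_T,L^G_{T'}$ agree on all of $F'$. (For a boundary face $F'\in G$ one checks similarly, using $u_{|\partial\Omega}=0$, that $L^G_{T_G}(\vec{x}_{F'})=0$, matching the datum built into $\mathcal B_G$.) Consequently $L^G$ satisfies the continuity conditions defining the MPFA group gradients; as $\mathcal A_G$ is invertible for $G\in\mathcal G_F$, the system \eqref{aG} is solved by $(\GRAD_\disc\aIh^c L^G)_{T_G}^{G}=\GRAD L^G_{T_G}$, and propagating through the flux-continuity relations $(\GRAD_\disc\aIh^c L^G)_T^{G,F}=\GRAD L^G_T$ for all $T\in\Th[G]$. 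This ``linear exactness adapted to discontinuous tensors'' gives $\Fl^{c,G}(\aIh^c L^G)=-|F|\,\diff[T]\GRAD u_{|\Omega_i}(\vec{x}_V)\SCAL\normal_{TF}$ for $T\subset\Omega_i$.

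\medskip

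By linearity, $\Fl^{c,G}(\aIh^c u)=\Fl^{c,G}(\aIh^c L^G)+\Fl^{c,G}(\aIh^c(u-L^G))$; inserting this into the convex combination and using $\sum_{G\in\mathcal G_F}\theta_F^G=1$ gives
\[
\term_F=\int_F\diff[T]\big(\GRAD u_{|T}-\GRAD u_{|\Omega_i}(\vec{x}_V)\big)\SCAL\normal_{TF}+\sum_{G\in\mathcal G_F}\theta_F^G\,\Fl^{c,G}\big(\aIh^c(u-L^G)\big),
\]
where $T\subset\Omega_i$ is the cell of \eqref{eq:fv.ener.est.cc}. The first term is $\lesssim|F|\,h_T\,\overline{\lambda}_T\,\norm[C^2]{u}$ since $|\GRAD u_{|\Omega_i}(\vec{x})-\GRAD u_{|\Omega_i}(\vec{x}_V)|\lesssim h_T\norm[C^2]{u}$ for $\vec{x}\in F$. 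For the second term, a second-order Taylor estimate for $u_{|\Omega_i}$ (using $u_{|\Omega_i}\in C^2(\overline{\Omega_i})$ and mesh regularity to bound $|\vec{x}_T-\vec{x}_V|\lesssim h_T$) gives $|(u-L^G_T)(\vec{x}_T)|\lesssim h_T^2\norm[C^2]{u}$, and similarly $|(u-L^G_{T_G})(\vec{x}_{F'})|\lesssim h_{T_G}^2\norm[C^2]{u}$ on boundary faces $F'\in G$; hence the entries of $\mathcal B_G(\aIh^c(u-L^G))$, being such quantities divided by distances $d_{TF}\gtrsim h/\eta$, are $\lesssim h\,\norm[C^2]{u}$. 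Therefore $|(\GRAD_\disc\aIh^c(u-L^G))_{T_G}^{G}|\le|\mathcal A_G^{-1}|\,|\mathcal B_G(\aIh^c(u-L^G))|\lesssim|\mathcal A_G^{-1}|\,h\,\norm[C^2]{u}$, the flux-continuity relations propagate an analogous bound to the remaining group gradients, and so $|\Fl^{c,G}(\aIh^c(u-L^G))|\lesssim|F|\,|\mathcal A_G^{-1}|\,h\,\norm[C^2]{u}$; summing over $G$ against $\sum_{G\in\mathcal G_F}\theta_F^G|\mathcal A_G^{-1}|\le\eta$ bounds the second term of $\term_F$ by $\lesssim|F|\,h\,\norm[C^2]{u}$. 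Collecting the two contributions gives $|\term_F|\lesssim|F|\,h\,\norm[C^2]{u}$, which closes the argument.

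\medskip

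The main obstacle is the step in the second paragraph: one must verify carefully that feeding the nodal values of a continuous, continuous-flux piecewise-$\Poly{1}$ function into the MPFA group-gradient construction returns exactly the cell-wise gradients — the linear exactness tailored to discontinuous diffusion — and then control $\Fl^{c,G}(\aIh^c(u-L^G))$ through the explicit system \eqref{aG}, which is where the dependence on $|\mathcal A_G^{-1}|$ (hence on $\eta$) enters. The geometric heart of the matter is the continuity of $L^G$, resting on the fact that the jump of $\GRAD u$ across an interface is purely normal and that the group vertex $\vec{x}_V$ lies on all the (planar) faces of the group.
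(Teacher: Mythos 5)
Your proof is correct in its overall architecture and reaches the same per-face flux-consistency bound $\bigl|\Fl^c(\aIh^c u)+\int_F\diff[T]\GRAD u_{|T}\SCAL\normal_{TF}\bigr|\lesssim|F|\,h\,\norm[C^2]{u}$ that the paper plugs into Theorem \ref{th:fv.ener.est.cc}, and your final summation step ($\sum_{F\in\Fh}d_F|F|\lesssim|\Omega|$) is exactly the paper's. The genuine difference is in how that per-face bound is obtained. The paper simply invokes \cite[Lemma 3.3]{Agelas.Di-Pietro.ea:10}, which states directly that $|(\GRAD_\disc\aIh^c u)_T^{G,F}-\GRAD u(\vec{x}_T)|\lesssim\norm[C^2]{u}(1+|\mathcal A_G^{-1}|)h$, and then takes the convex combination; you instead re-derive that gradient-consistency estimate from scratch by splitting $u=L^G+(u-L^G)$ with $L^G$ the piecewise Taylor linearisation at the group vertex, showing that $L^G$ is continuous with continuous fluxes (the purely-normal jump of $\GRAD u$ across subdomain interfaces being the geometric key), invoking exactness of the group-gradient construction on such functions, and controlling the remainder through $\mathcal A_G^{-1}$. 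Your route is more self-contained and makes visible where the hypothesis $\sum_{G\in\mathcal G_F}\theta_F^G|\mathcal A_G^{-1}|\le\eta$ enters; the paper's route is shorter but opaque on that point. Two remarks: (i) the step you yourself flag as the main obstacle — that feeding the interpolant of a continuous, flux-continuous piecewise-$\Poly{1}$ function into the system \eqref{aG} returns exactly the cell-wise gradients, and that the $O(h)$ bound on $(\GRAD_\disc\cdot)_{T_G}^{G}$ propagates through the continuity relations to the gradients $(\GRAD_\disc\cdot)_T^{G,F}$ for $T\ne T_G$ — is precisely the content of the cited lemma and is asserted rather than verified against the explicit forms of $\mathcal A_G$ and $\mathcal B_G$; as written it is a claim about the method's design principle, not a proof, so your argument is only complete once that verification is supplied (note that for the flux itself the propagation is immediate, since every face of $G$ is a face of $T_G$ and flux continuity lets you evaluate $\Fl^{c,G}$ from the $T_G$ side); (ii) the vertex $\vec{x}_V$ and hence $L^G$ depend on $G\in\mathcal G_F$, so the first term in your decomposition of $\term_F$ should carry the convex combination over $G$ — harmless, since each $\vec{x}_V^G$ is a vertex of $F$ and the bound $|\GRAD u_{|\Omega_i}(\vec{x})-\GRAD u_{|\Omega_i}(\vec{x}_V^G)|\lesssim h_T\norm[C^2]{u}$ holds uniformly, but it should be stated.
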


\begin{remark}[About the coercivity]
In general, the coercivity of MPFA methods is not known, and numerical tests indicate that it
might actually fail for MPFA-O scheme on some very distorted meshes \cite[Section 3.3]{Droniou:14}.
However, for the MPFA-L/G schemes, an indicator can be designed that only requires to compute the eigenvalues of small systems, and that provides a sufficient condition for the methods to be coercive \cite[Lemma 3.4]{Agelas.Di-Pietro.ea:10}.
\end{remark}

\begin{proof}
\cite[Lemma 3.3]{Agelas.Di-Pietro.ea:10} shows that, for all $T\in\Th$, $F\in\Fh[T]$ and $G\in\mathcal G_F$, $|(\GRAD_\disc \aIh^c u)_T^{G,F}-\GRAD u(\vec{x}_T)|\lesssim \norm[C^2]{u}(1+|\mathcal A_G^{-1}|)h$ (in this lemma, the quantity $\norm[C^2]{u}$ does not explicitly appear but is hidden in a constant `$C_5$'; the proof however clearly shows that this constant depends linearly on $\norm[C^2]{u}$). By $C^2$ regularity of $u$ in the sub-domain $\Omega_i$ that contains $T$, we infer that 
\[
\sup_{\vec{x}\in F}|(\GRAD_\disc \aIh^c u)_T^{G,F}-\GRAD u(\vec{x})|\lesssim \norm[C^2]{u}(1+|\mathcal A_G^{-1}|)h.
\]
Hence,
\[
\left|-|F|\diff[T](\GRAD_\disc \aIh^c u)_T^{G,F}\SCAL\normal_{TF}+\int_F \diff[T]\GRAD u_{|T}\SCAL\normal_{TF}\right|\lesssim |F|\norm[C^2]{u}(1+|\mathcal A_G^{-1}|)h.
\]
Taking the convex combination weighted by $(\theta_F^G)_{G\in\mathcal G_F}$ of this inequality and recalling the definition of $\eta$ and \eqref{fv:fluxes.mpfa}, we infer that
\[
\left|\Fl^c(\aIh^c u) +\int_F \diff[T]\GRAD u_{|T}\SCAL\normal_{TF}\right|\lesssim |F|\norm[C^2]{u}h.
\]
The proof is completed by plugging this estimate into \eqref{eq:fv.ener.est.cc} and by noticing that
$d_F|F|\lesssim |T|+|T'|$ if $F\in\Fhi$ with $\Th[F]=\{T,T'\}$, and $d_F|F|\lesssim |T|$
if $F\in\Fhb$ with $\Th[F]=\{T\}$, so that $\sum_{F\in\Fh} d_F|F|\lesssim\sum_{T\in\Th}|T|=|\Omega|$.\end{proof}

\section{Conclusion}\label{sec:conclusion}

We developed an abstract analysis framework, in the spirit of Strang's second lemma, for approximations of linear PDEs in weak form. Contrary to Strang's lemma, the approximations can be written in fully discrete form, with test and trial spaces that are not spaces of functions -- and thus not manipulable together with the continuous test and trial spaces. The framework identifies a general consistency error that bounds, under an inf--sup condition, the discrete norm of the difference between the approximation solution and an interpolant of the continuous solution. We also established improved estimates in a weaker norm, using the Aubin--Nitsche trick.

This abstract framework was applied to two popular families of numerical methods for diffusion equations: conforming and non-conforming VEM, and cell-centred or cell- and face-centred Finite Volume methods. For each of these methods, we obtained energy error estimates that accurately track the local dependencies on the diffusion tensor, through local anisotropy ratios and diffusion magnitude. In both cases, such estimates seem to be entirely new. Optimal $L^2$ error estimates were also established for VEM in a unified setting.

To analyse the VEM schemes for the anisotropic diffusion model, optimal approximation properties of the  oblique elliptic projector on local polynomial spaces were established. These properties are of general interest to several high-order methods for diffusion equations on polytopal meshes.

The range of models and numerical techniques covered by the analysis framework goes beyond the examples above. Actually, an inspection of error bounds in some previous works show that they are based on estimations of terms that are (components of) the consistency error of our abstract setting. For example, in \cite[Theorem 10]{Di-Pietro.Droniou.ea:15}, robust error estimates for the HHO method applied to an advection--diffusion--reaction model are established by bounding terms $\term_1$, $\term_2$ and $\term_3$ that respectively correspond to the consistency errors of the diffusion component of the model, of the advection--reaction component, and of the weakly enforced (\emph{\`a la} Nitsche) boundary conditions. The analysis in \cite{Di-Pietro.Droniou.ea:15} was however carried out in an \emph{ad-hoc} setting, and not identified as part of a wider theory as done in this paper.


\section*{Acknowledgements}
The work of the first author was supported by Agence Nationale de la Recherche grants HHOMM (ANR-15-CE40-0005) and fast4hho (ANR-17-CE23-0019).
The work of the second author was partially supported by the Australian Government through the Australian Research Council's Discovery Projects funding scheme (project number DP170100605).
Fruitful discussions with Simon Lemaire (INRIA Lille - Nord Europe) are gratefully acknowledged.


\begin{taggedblock}{dg}

  \appendix
  
\section{Discontinuous Galerkin methods}\label{app:dg}

In this appendix we apply the abstract analysis framework to DG methods.
  While the error estimates in this section can be found in the literature (see, e.g., \cite{Di-Pietro.Ern.ea:08,Di-Pietro.Ern:12}), working in a fully discrete formulation entails in our opinion some relevant simplifications, as discussed hereafter.
  First, unlike \cite[Definition 1.31]{Di-Pietro.Ern:12}, we do not need to assume further regularity on the exact solution to define the notion of consistency for the DG scheme: the requirement that $u$ sits in (at least) $H^2(\Th)$ only appears in the error estimate of Theorem \ref{th:err.est.dg}.
  Second, we have a clear notion of the primal-dual consistency requirement that intervenes in proving optimal $L^2$ error estimates; this notion is present but not explicitly formulated in \cite[Section 4.2.4]{Di-Pietro.Ern:12}.

We consider here the model and notations introduced in Section \ref{model:setting}.

\subsection{The Symmetric Weighted Interior Penalty method}

We assume that the mesh belongs to a regular sequence in the sense of \cite[Section 1.4]{Di-Pietro.Ern:12}, and denote by $\varrho>0$ the corresponding regularity parameter.
Under this assumption it holds, in particular, that the diameter of an element and those of its faces are uniformly comparable; see \cite[Lemma 1.42]{Di-Pietro.Ern:12}.
For any $T\in\Th$ and any $F\in\Fh[T]$, we let
\begin{equation}\label{eq:sdiff.TF}
  \sdiff[TF]\coloneq(\diff[T]\normal_{TF})\SCAL\normal_{TF}.
\end{equation}
For any internal face $F$, we select an arbitrary but fixed ordering of the elements $T_1,T_2\in\Th$ that share $F$ and, for any function $v$ admitting a possibly double-valued trace on $F$, we define the following jump and weighted average operators:
\begin{equation}\label{eq:trace.operators}
  \jump{v}\coloneq v_{|T_1} - v_{|T_2},\qquad
  \wavg{v}\coloneq \omega_1v_{|T_1} + \omega_2v_{|T_2},
\end{equation}
with diffusion-dependent weights $\omega_1,\omega_2\in(0,1)$ such that
\begin{equation}\label{eq:weights}
  \omega_1+\omega_2=1\quad\mbox{ and }\quad
  \max\left(\omega_1\sqrt{\sdiff[T_1F]},\omega_2\sqrt{\sdiff[T_2F]}\right)
  \le\sqrt{\frac{\sdiff[F]}{2}}\mbox{ with }
  \sdiff[F]\coloneq\frac{2\sdiff[T_1F]\sdiff[T_2F]}{\sdiff[T_1F]+\sdiff[T_2F]}.
\end{equation}
The simplest possible choice for the weights is $\omega_1=1-\omega_2\coloneq\sqrt{\sdiff[T_2F]}/(\sqrt{\sdiff[T_1F]}+\sqrt{\sdiff[T_2F]})$.
As a matter of fact, in this case we have that
$$
\max\left(\omega_1\sqrt{\sdiff[T_1F]},\omega_2\sqrt{\sdiff[T_2F]}\right)
= \frac{\sqrt{\sdiff[T_1F]\sdiff[T_2F]}}{\sqrt{\sdiff[T_1F]}+\sqrt{\sdiff[T_2F]}}
\le\frac{\sqrt{\sdiff[T_1F]\sdiff[T_2F]}}{\sqrt{\sdiff[T_1F]+\sdiff[T_2F]}}
=\sqrt{\frac{\sdiff[F]}{2}}.
$$
We extend the above notations to any boundary face $F\in\Fhb$ such that $F\in\Fh[T]$ for some $T\in\Th$ by setting $\jump{v}=\wavg{v}\coloneq v$ and $\sdiff[F]\coloneq\sdiff[TF]$.

Let an integer $k\ge1$ be fixed.
The space of unknowns for DG methods is spanned by piecewise polynomial functions of total degree $\le k$, i.e., $\aXh=\aYh=\Poly{k}(\Th)$.
The associated interpolant $\aIh$ is the broken $L^2$-projector $\lproj{k}:L^2(\Omega)\to\Poly{k}(\Th)$, defined from the local $L^2$-projectors $\lproj[T]{k}$ by: For all $w\in L^2(\Omega)$,
\begin{equation}\label{def:Ih.dg}
(\lproj[h]{k}w)_{|T}= \lproj[T]{k}(w_{|T})\quad\forall T\in\Th.
\end{equation}
For the sake of simplicity, we focus on the so-called Symmetric Weighted Interior Penalty method, which is written under the form \eqref{a:pro.h} with
\begin{equation}\label{eq:source.dg}
\alinh(v_h)= (f,v_h)\qquad\forall v_h\in \aXh,
\end{equation}
and, for $w_h,v_h\in\aXh$,
\begin{equation}\label{eq:ah.dg}
  \begin{aligned}
    \abilh(w_h,v_h)
    &\coloneq (\diff\GRADh w_h,\GRADh v_h) + \eta s_h(w_h,v_h)
    \\
    &\quad-\sum_{F\in\Fh}\left[
    (\wavg{\diff\GRADh w_h}\SCAL\normal_F,\jump{v_h})_F
    + (\jump{w_h},\wavg{\diff\GRADh v_h}\SCAL\normal_F)_F
    \right]
  \end{aligned}
\end{equation}
with $\eta>0$ a user-dependent penalty parameter, and the stabilisation bilinear form $s_h$ defined by
\begin{equation}\label{eq:sh.dg}
  s_h(w_h,v_h)\coloneq\sum_{F\in\Fh}\sdiff[F]h_F^{-1}(\jump{w_h},\jump{v_h})_F.
\end{equation}

\begin{remark}[Variations on symmetry and stabilisation]
  Non-symmetric and skew-symmetric variations of this method, as well as alternative stabilisations bilinear forms, can be found in \cite{Di-Pietro.Ern.ea:08}.
  The following analysis can be extended to cover all these variations, but this topic will not be addressed here to keep the exposition as simple as possible.
We refer the interested reader to \cite[Section 5.3]{Di-Pietro.Ern:12} for an extensive discussion on this topic.
\end{remark}
We equip the space $X_h$ with the following norm, that involves volumetric gradients and jumps across the faces:
\begin{equation}\label{eq:norm.U.dg}
  \norm[\aXh]{v_h}\coloneq\left(
  \norm{\diff^{\frac12}\GRADh v_h}^2 + \seminorm[s,h]{v_h}^2
  \right)^{\frac12},\qquad
  \seminorm[s,h]{v_h}\coloneq 
  s_h(v_h,v_h)^{\frac12}.
\end{equation}
To establish the coercivity of $\abilh$, the parameter $\eta$ must be chosen large enough. This choice depends on two quantities. The first, $N_\partial\in\Natural$, is a bound on the number of faces of each mesh element. The second, $C_{\rm tr}>0$, is the constant of the following discrete trace inequality, valid for all $T\in\Th$, all $v\in\Poly{k}(T)$, and all $F\in\Fh[T]$:
\begin{equation}\label{eq:discrete.trace}
  \norm[F]{v}\le C_{\rm tr}h_F^{-\frac12}\norm[T]{v}.
\end{equation}
The mesh regularity assumptions \cite[Section 1.4]{Di-Pietro.Ern:12} ensure that $N_\partial$ and  $C_{\rm tr}$ can be bounded from above independently of $h$.

\subsection{Error estimate in energy norm}

\begin{theorem}[Energy estimate for the DG method]\label{th:err.est.dg}
Assume that $\eta>C_{\rm tr}^2N_\partial$ and let 
\begin{equation}\label{def:gamma.dg}
\gamma\coloneq\frac{\eta-C_{\rm tr}^2N_\partial}{1+\eta}.
\end{equation}
Let moreover $1\le r\le k$, and assume that the solution $u\in H^1_0(\Omega)$ to \eqref{eq:weak}
belongs to $H^{r+1}(\Th)$.
Then, denoting by $u_h$ the solution of the SWIP DG scheme, it holds that
\begin{equation}\label{dg:est.energy}
\norm[\aXh]{u_h-\lproj[h]{k} u}\lesssim \gamma^{-1}(1+\eta)\left(
      \sum_{T\in\Th}\overline{\lambda}_T h_T^{2r}\seminorm[H^{r+1}(T)]{u}^2
      \right)^{\frac12}
\end{equation}
where the hidden constant is independent of $h$, $\diff$, and $\eta$.
\end{theorem}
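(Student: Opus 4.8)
The estimate will follow from Theorem~\ref{a:th.est.var} applied with $\aXh=\aYh=\Poly{k}(\Th)$, $\aIh=\lproj[h]{k}$, and the forms \eqref{eq:source.dg}--\eqref{eq:ah.dg}; thus the two ingredients to establish are (a) that $\abilh$ is coercive for $\norm[\aXh]{\cdot}$ with constant $\gamma$ exactly as in \eqref{def:gamma.dg}, and (b) that $\norm[\aYh^\star]{\Cerr{u}{\cdot}}$ is bounded by $(1+\eta)$ times the square root appearing in \eqref{dg:est.energy}.

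\emph{Step 1 (coercivity).} Evaluating \eqref{eq:ah.dg} on the diagonal leaves $\abilh(v_h,v_h)=\norm{\diff^{\frac12}\GRADh v_h}^2+\eta\seminorm[s,h]{v_h}^2-2\sum_{F\in\Fh}(\wavg{\diff\GRADh v_h}\SCAL\normal_F,\jump{v_h})_F$. The crucial bound is $\norm[F]{\wavg{\diff\GRADh v_h}\SCAL\normal_F}\le\sqrt{\sdiff[F]/2}\sum_{T\in\Th[F]}\norm[F]{\diff[T]^{\frac12}\GRAD(v_h)_{|T}}$, obtained from $|\diff[T]\GRAD(v_h)_{|T}\SCAL\normal_{TF}|\le\sqrt{\sdiff[TF]}\,|\diff[T]^{\frac12}\GRAD(v_h)_{|T}|$ together with the defining property \eqref{eq:weights} of the weights. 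Applying the discrete trace inequality \eqref{eq:discrete.trace} component-wise to $\diff[T]^{\frac12}\GRAD(v_h)_{|T}\in\Poly{k-1}(T)^d$, then a Young inequality with a free parameter $\alpha>0$, and using that each element has at most $N_\partial$ faces, the cross term is controlled by $\tfrac{C_{\rm tr}^2N_\partial}{\alpha}\norm{\diff^{\frac12}\GRADh v_h}^2+\alpha\seminorm[s,h]{v_h}^2$. Balancing $\alpha$ and invoking $\eta>C_{\rm tr}^2N_\partial$ gives $\abilh(v_h,v_h)\ge\gamma\,\norm[\aXh]{v_h}^2$ with $\gamma$ as in \eqref{def:gamma.dg}, so \eqref{a:infsup} holds.

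\emph{Step 2 (consistency identity).} Since $u\in H^1_0(\Omega)\cap H^{r+1}(\Th)\subset H^2(\Th)$ solves \eqref{eq:weak}, one has $f=-\DIV(\diff\GRAD u)\in L^2(\Omega)$, hence the normal flux $\diff\GRAD u\SCAL\normal_F$ is single-valued across every internal face, while $\jump{u}=0$ on all faces (internal faces by $u\in H^1_0(\Omega)$, boundary faces by $u=0$ on $\partial\Omega$ and the convention $\jump{u}=u$ there). Element-wise integration by parts of $(f,v_h)$, together with these facts and $\omega_1+\omega_2=1$ (so $\wavg{\diff\GRADh u}\SCAL\normal_F=\diff\GRAD u\SCAL\normal_F$), yields $(f,v_h)=\abilh(u,v_h)$ for all $v_h\in\aXh$, all jump and penalty contributions of $u$ vanishing. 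Therefore $\Cerr{u}{v_h}=\alinh(v_h)-\abilh(\lproj[h]{k}u,v_h)=\abilh(u-\lproj[h]{k}u,v_h)$.

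\emph{Step 3 (bounding the consistency error).} Set $e\coloneq u-\lproj[h]{k}u$. Cauchy--Schwarz on the four terms of $\abilh(e,v_h)$ gives $|(\diff\GRADh e,\GRADh v_h)|\le\norm{\diff^{\frac12}\GRADh e}\,\norm[\aXh]{v_h}$ and $\eta|s_h(e,v_h)|\le\eta\seminorm[s,h]{e}\,\norm[\aXh]{v_h}$; the cross term with $\wavg{\diff\GRADh e}$ is handled as in Step~1, additionally using a continuous trace inequality on $\GRAD e\in H^1(T)^d$ (e.g.\ \cite[Lemma~1.49]{Di-Pietro.Ern:12}), and the cross term with $\wavg{\diff\GRADh v_h}$ using the discrete trace inequality on $\diff[T]^{\frac12}\GRAD(v_h)_{|T}$ while pairing $\norm[F]{\jump{e}}$ with an $\seminorm[s,h]{\cdot}$-type weight. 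The approximation properties \eqref{eq:lproj:approx} of the broken $L^2$-projector with $(s,m)=(r+1,1)$ and $(r+1,2)$ (the latter legitimate since $r\ge1$) and \eqref{eq:lproj:approx.trace} with $(s,m)=(r+1,0)$, combined with $\sdiff[F]\le2\sdiff[TF]\le2\overline{\lambda}_T$ for $T\in\Th[F]$ and the uniform comparability $h_F\simeq h_T$, show that each resulting factor is $\lesssim\big(\sum_{T\in\Th}\overline{\lambda}_T h_T^{2r}\seminorm[H^{r+1}(T)]{u}^2\big)^{\frac12}$. Collecting, $\norm[\aYh^\star]{\Cerr{u}{\cdot}}\lesssim(1+\eta)\big(\sum_{T\in\Th}\overline{\lambda}_T h_T^{2r}\seminorm[H^{r+1}(T)]{u}^2\big)^{\frac12}$, the factor $\eta$ coming only from $\eta s_h(e,v_h)$ and the diffusive and cross terms contributing the ``$1$'' up to $C_{\rm tr},N_\partial$-dependent factors absorbed in $\lesssim$. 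Feeding this into \eqref{energy.est} with the $\gamma$ of Step~1 produces \eqref{dg:est.energy}.

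\emph{Main obstacle.} The skeleton is the classical SWIP analysis, so the difficulty is not the logic but the bookkeeping of the diffusion: every inequality must be routed through $\diff^{\frac12}$-weighted gradients and through the weighted averages, so that only $\overline{\lambda}_T$ — and no power of the local anisotropy ratio $\alpha_T$ — survives in the final bound. The role of hypothesis \eqref{eq:weights} is precisely to make the estimate of $\norm[F]{\wavg{\diff\GRADh v_h}\SCAL\normal_F}$ robust with respect to heterogeneity, which removes the $\alpha_T$ that was unavoidable for VEM in \eqref{vem:est.energy}. A secondary technical point is extracting the sharp constant \eqref{def:gamma.dg} from the Young inequality in Step~1, which pins down the balancing of the two free coefficients against $\eta$ and $C_{\rm tr}^2N_\partial$.
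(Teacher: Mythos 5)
Your proposal is correct and, in substance, coincides with the paper's proof: the same coercivity argument (bounding the off-diagonal consistency/symmetry terms by $C_{\rm tr}N_\partial^{1/2}\norm{\diff^{\frac12}\GRADh v_h}\seminorm[s,h]{v_h}$ via the weight property \eqref{eq:weights} and the discrete trace inequality), the same four-term splitting of the consistency error, and the same term-by-term estimates via \eqref{eq:lproj:approx}--\eqref{eq:lproj:approx.trace}, fed into Theorem \ref{a:th.est.var}. The one presentational difference is your Step 2: you first extend $\abilh$ to $H^2(\Th)+\Poly{k}(\Th)$ and establish the classical identity $(f,v_h)=\abilh(u,v_h)$ so as to write $\Cerr{u}{v_h}=\abilh(u-\lproj[h]{k}u,v_h)$, whereas the paper obtains the identical four terms $\term_1,\dots,\term_4$ directly from \eqref{var.cons} by element-wise integration by parts, without ever plugging $u$ into $\abilh$ — this is precisely the simplification the appendix advertises (consistency is defined without extra regularity on $u$; the $H^2(\Th)$ requirement enters only in the estimate). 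Your detour is harmless here since $r\ge1$ guarantees $u\in H^2(\Th)$, but it re-introduces the second-Strang-lemma scaffolding the framework is designed to bypass. Two minor technical remarks: (a) a single-parameter Young inequality, balanced, yields a coercivity constant that is in general \emph{not} equal to $\gamma=(\eta-C_{\rm tr}^2N_\partial)/(1+\eta)$ but is at least as large (the paper instead uses the exact quadratic-form inequality $x^2+\eta y^2-2\beta xy\ge\frac{\eta-\beta^2}{1+\eta}(x^2+y^2)$), so \eqref{dg:est.energy} still follows; (b) for $\term_2$ you route through a continuous trace inequality on $\GRAD(u-\lproj[h]{k}u)$ plus the volume estimates with $(s,m)=(r+1,1),(r+1,2)$, while the paper invokes \eqref{eq:lproj:approx.trace} with $m=1$ directly — both give the same $\overline{\lambda}_T^{1/2}h_T^{r}$ factor.
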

Before proving Theorem \ref{th:err.est.dg}, we state the following preliminary lemma, which contains an estimate of the boundary terms in $a_h$.
Its proof is a straightforward adaptation of the arguments of \cite[Lemma 4.50]{Di-Pietro.Ern:12}.
\begin{lemma}[Estimate of boundary terms]
  It holds, for all $w\in H^2(\Th)$ and all $v_h\in \aXh$
  \begin{equation}\label{eq:consistency.term.est}
    \left|
    \sum_{F\in\Fh}(\wavg{\diff\GRADh w}\SCAL\normal_F,\jump{v_h})_F
    \right|\le
    \left(
    \sum_{T\in\Th} h_T\norm[\partial T]{\diff[T]^{\frac12}\GRAD w_{|T}}^2
    \right)^{\frac12}\seminorm[s,h]{v_h}
  \end{equation}
  so that in particular, for all $w_h\in \aXh$ and $v_h\in \aXh$,
  \begin{equation}\label{eq:symmetry.term.est}
    \left|
    \sum_{F\in\Fh}(\wavg{\diff\GRADh w_h}\SCAL\normal_F,\jump{v_h})_F
    \right|\le
    C_{\rm tr} N_\partial^{\frac12}\norm{\diff^{\frac12}\GRADh w_h}\seminorm[s,h]{v_h}.
  \end{equation}
\end{lemma}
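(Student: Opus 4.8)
The statement bounds the boundary/consistency terms of the DG form $a_h$: for $w\in H^2(\Th)$ and $v_h\in\aXh$,
\[
\Bigl|\sum_{F\in\Fh}(\wavg{\diff\GRADh w}\SCAL\normal_F,\jump{v_h})_F\Bigr|
\le\Bigl(\sum_{T\in\Th}h_T\norm[\partial T]{\diff[T]^{1/2}\GRAD w_{|T}}^2\Bigr)^{1/2}\seminorm[s,h]{v_h},
\]
and deduces the "discrete" version \eqref{eq:symmetry.term.est} for $w_h\in\aXh$. Let me think about how I'd prove this.

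The plan is to estimate each face term by the Cauchy--Schwarz inequality, splitting the powers of $h_F$ and $\sdiff[F]$ so that the stabilisation seminorm \eqref{eq:sh.dg} is reconstructed on one side, and then to control the remaining factor by means of the weight conditions \eqref{eq:weights} together with a Cauchy--Schwarz inequality adapted to the symmetric positive definite matrix $\diff[T]$.

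First I would fix $F\in\Fh$ and write, by Cauchy--Schwarz on $L^2(F)$,
\[
(\wavg{\diff\GRADh w}\SCAL\normal_F,\jump{v_h})_F
\le\Bigl(h_F\sdiff[F]^{-1}\norm[F]{\wavg{\diff\GRADh w}\SCAL\normal_F}^2\Bigr)^{1/2}
\Bigl(h_F^{-1}\sdiff[F]\norm[F]{\jump{v_h}}^2\Bigr)^{1/2}.
\]
Summing over $F\in\Fh$ and using the discrete Cauchy--Schwarz inequality, the contribution of the second factors is exactly $\seminorm[s,h]{v_h}$ by the definition \eqref{eq:sh.dg} of $s_h$, so the task reduces to bounding $\sum_{F\in\Fh}h_F\sdiff[F]^{-1}\norm[F]{\wavg{\diff\GRADh w}\SCAL\normal_F}^2$.

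Next, for an interior face $F$ shared by $T_1,T_2$, I would expand the weighted average and estimate, for $i\in\{1,2\}$,
\[
|\diff[T_i]\GRAD w_{|T_i}\SCAL\normal_F|=|(\diff[T_i]^{1/2}\GRAD w_{|T_i})\SCAL(\diff[T_i]^{1/2}\normal_F)|\le|\diff[T_i]^{1/2}\GRAD w_{|T_i}|\,\sdiff[T_iF]^{1/2},
\]
using $|\diff[T_i]^{1/2}\normal_F|^2=\sdiff[T_iF]$ from \eqref{eq:sdiff.TF}. The weight inequality $\omega_i\sqrt{\sdiff[T_iF]}\le\sqrt{\sdiff[F]/2}$ then gives $|\wavg{\diff\GRADh w}\SCAL\normal_F|\le\sqrt{\sdiff[F]/2}\,\bigl(|\diff[T_1]^{1/2}\GRAD w_{|T_1}|+|\diff[T_2]^{1/2}\GRAD w_{|T_2}|\bigr)$, and squaring (with $(a+b)^2\le 2a^2+2b^2$) yields $\sdiff[F]^{-1}\norm[F]{\wavg{\diff\GRADh w}\SCAL\normal_F}^2\le\sum_{T\in\Th[F]}\norm[F]{\diff[T]^{1/2}\GRAD w_{|T}}^2$; the same bound holds on boundary faces, where $\wavg{\cdot}$ and $\sdiff[F]$ reduce to the single-element quantities. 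Rearranging the sum over faces into a sum over elements and using $h_F\le h_T$ for $F\in\Fh[T]$ gives $\sum_{F\in\Fh}h_F\sdiff[F]^{-1}\norm[F]{\wavg{\diff\GRADh w}\SCAL\normal_F}^2\le\sum_{T\in\Th}h_T\norm[\partial T]{\diff[T]^{1/2}\GRAD w_{|T}}^2$, which is precisely \eqref{eq:consistency.term.est}.

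Finally, for \eqref{eq:symmetry.term.est} I would take $w=w_h\in\aXh$ and, instead of bounding $h_F\le h_T$ at the last step, keep the factor $h_F$ and apply the discrete trace inequality \eqref{eq:discrete.trace} componentwise to $\diff[T]^{1/2}\GRAD w_{h|T}\in\Poly{k-1}(T)^d\subset\Poly{k}(T)^d$: this turns $h_F\norm[F]{\diff[T]^{1/2}\GRAD w_{h|T}}^2$ into $C_{\rm tr}^2\norm[T]{\diff[T]^{1/2}\GRAD w_{h|T}}^2$, and summing over the at most $N_\partial$ faces of each element gives $C_{\rm tr}^2N_\partial\norm{\diff^{1/2}\GRADh w_h}^2$, hence the constant $C_{\rm tr}N_\partial^{1/2}$. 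The main obstacle is bookkeeping rather than depth: one has to choose the $h_F$/$\sdiff[F]$ splitting so that the stabilisation seminorm appears exactly, and to invoke the precise weight inequality in \eqref{eq:weights} (with $\sdiff[F]$ the harmonic mean of $\sdiff[T_1F]$ and $\sdiff[T_2F]$) to absorb $\sdiff[F]$ without losing the scaling; replacing it by, say, the arithmetic mean would already be enough for a qualitative bound but would spoil the clean constant.
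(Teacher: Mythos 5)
Your proof is correct and follows essentially the same route as the paper's: the same $h_F/\sdiff[F]$ splitting to reconstruct $\seminorm[s,h]{v_h}$, the same use of $|\diff[T]^{1/2}\normal_F|^2=\sdiff[TF]$ and of the weight condition \eqref{eq:weights} to absorb $\sdiff[F]$, the same sum swap with $h_F\le h_T$, and the same discrete trace inequality for \eqref{eq:symmetry.term.est}. The only (immaterial) difference is that you perform the global Cauchy--Schwarz over faces first and then bound the weighted average, whereas the paper bounds each face term first; the constants come out identically.
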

\begin{proof}
  (i) \emph{Proof of \eqref{eq:consistency.term.est}.}
  For any $F\in\Fhb$ such that $F\in\Fh[T]$ for some $T\in\Th$, we can readily write
  \begin{equation}\label{eq:consistency.term.est:boundary.face}
    \begin{aligned}
      |(\wavg{\diff\GRAD w_{|T}}\SCAL\normal_F,\jump{v_h})_F|
      &= |(\diff[T]\GRAD w_{|T}\SCAL\normal_F,\jump{v_h})_F|
      \\
      &\le\norm[F]{\diff[T]^{\frac12}\GRAD w_{|T}} \norm[F]{|\diff[T]^{\frac12}\SCAL\normal_F|\jump{v_h}}
      \\
      &\le h_F^{\frac12}\norm[F]{\diff[T]^{\frac12}\GRAD w_{|T}}
      ~\left(\frac{\sdiff[F]}{h_F}\right)^{\frac12}\norm[F]{\jump{v_h}},
    \end{aligned}
  \end{equation}
  where we have used the definition of the weighted average operator on boundary faces in the first line and the symmetry of $\diff[T]$ together with a Cauchy--Schwarz inequality in the second line.
  To pass to the third line, we have multiplied and divided by $h_F^{1/2}$, we have extracted the constant scalar $|\diff[T]^{1/2}\normal_F|$ from the norm, and we have observed that $|\diff[T]^{1/2}\normal_F|^2=\diff[T]^{1/2}\normal_F\SCAL\diff[T]^{1/2}\normal_F=\diff[T]\normal_F\SCAL\normal_F=\sdiff[F]$.

  For an internal face $F\in\Fhi$ shared by distinct mesh elements $T_1,T_2\in\Th$ we have, on the other hand,
  \begin{align}
      |(\wavg{\diff\GRADh w}\SCAL\normal_F,\jump{v_h})_F|
      &\le
      |\omega_1(\diff[T_1]\GRAD w_{|T_1}\SCAL\normal_F,\jump{v_h})_F|
      + |\omega_2(\diff[T_2]\GRAD w_{|T_2}\SCAL\normal_F,\jump{v_h})_F|
      \nonumber\\
      &\le
      \omega_1\sdiff[T_1F]^{\frac12}\norm[F]{\diff[T_1]^{\frac12}\GRAD w_{|T_1}}\norm[F]{\jump{v_h}}
      + \omega_2\sdiff[T_2F]^{\frac12}\norm[F]{\diff[T_2]^{\frac12}\GRAD w_{|T_2}}\norm[F]{\jump{v_h}}
      \nonumber\\
      &\le\frac1{\sqrt{2}}\left(\sum_{T\in\Th[F]}h_F^{\frac12}\norm[F]{\diff[T]^{\frac12}\GRAD w_{|T}}\right)
      ~\left(\frac{\sdiff[F]}{h_F}\right)^{\frac12}\norm[F]{\jump{v_h}},
      \nonumber\\
      &\le\left(\sum_{T\in\Th[F]}h_F\norm[F]{\diff[T]^{\frac12}\GRAD w_{|T}}^2\right)^{\frac12}
      ~\left(\frac{\sdiff[F]}{h_F}\right)^{\frac12}\norm[F]{\jump{v_h}},
		\label{eq:consistency.term.est:internal.face}
  \end{align}
  where we have used the definition \eqref{eq:trace.operators} of the weighted average operator together with the triangle inequality in the first line,
  we have proceeded as in \eqref{eq:consistency.term.est:boundary.face} for each addend to pass to the second line,
  we have multiplied and divided by $h_F^{1/2}$ and used the properties \eqref{eq:weights} of the weights to pass to the third line,
  and we have used a discrete Cauchy--Schwarz inequality on the sum over $T\in\Th[F]$ to conclude.
  
  Using \eqref{eq:consistency.term.est:internal.face} and \eqref{eq:consistency.term.est:boundary.face} to estimate the argument of the summation in the left-hand side of \eqref{eq:consistency.term.est} followed by a discrete Cauchy--Schwarz inequality on the sum over $F\in\Fh$, we arrive at
  \begin{equation}\label{eq:consistency.term.est:final}
    \begin{aligned}
      \left|  
      \sum_{F\in\Fh}(\wavg{\diff\GRADh w}\SCAL\normal_F,\jump{v_h})_F
      \right|
      &\le\left(
      \sum_{F\in\Fh}\sum_{T\in\Th[F]}h_F\norm[F]{\diff[T]^{\frac12}\GRAD w_{|T}}^2
      \right)^{\frac12}\seminorm[s,h]{v_h}
      \\
      &\le\left(
      \sum_{T\in\Th}h_T\norm[\partial T]{\diff[T]^{\frac12}\GRAD w_{|T}}^2
      \right)^{\frac12}\seminorm[s,h]{v_h},
    \end{aligned}
  \end{equation}
  where, to pass to the second line, we have used the fact that
  \begin{equation}\label{eq:sum.swap}
    \sum_{F\in\Fh}\sum_{T\in\Th[F]}\bullet = \sum_{T\in\Th}\sum_{F\in\Fh[T]}\bullet
  \end{equation}
  followed by the fact that, for any $T\in\Th$, $\partial T = \bigcup_{F\in\Fh[T]}\overline{F}$ and $h_F\le h_T$ for any $F\in\Fh[T]$.
  \medskip\\
  \emph{Proof of \eqref{eq:symmetry.term.est}.}
  Resuming from the first line of \eqref{eq:consistency.term.est:final} with $w=w_h$, using the discrete trace inequality \eqref{eq:discrete.trace} together with \eqref{eq:sum.swap}, we obtain
  \begin{align*}
  \left|  
  \sum_{F\in\Fh}(\wavg{\diff\GRADh w_h}\SCAL\normal_F,\jump{v_h})_F
  \right|
  \le{}&\left(
  \sum_{T\in\Th}\sum_{F\in\Fh[T]} C_{\rm tr}^2\norm[T]{\diff[T]^{\frac12}\GRAD w_h}^2
  \right)^{\frac12}\seminorm[s,h]{v_h}\\
  \le{}& C_{\rm tr} N_\partial^{\frac12}\norm{\diff^{\frac12}\GRADh w_h}\seminorm[s,h]{v_h}.\qedhere
  \end{align*}
\end{proof}
\begin{proof}[Proof of Theorem \ref{th:err.est.dg}]
  We first establish some coercivity and consistency properties of the DG scheme, before concluding by invoking Theorems \ref{a:th.est.var}.
  \medskip\\
  (i) \emph{Coercivity.}
  Making $w_h=v_h$ in \eqref{eq:ah.dg}, and using \eqref{eq:symmetry.term.est} to estimate the terms in the second line of \eqref{eq:ah.dg}, it is readily inferred that
  $
  \abilh(v_h,v_h)
  \ge\norm{\diff^{\frac12}\GRADh v_h}^2
  + \eta\seminorm[s,h]{v_h}^2
  - 2C_{\rm tr}N_\partial^{\frac12}\norm{\diff^{\frac12}\GRADh v_h}\seminorm[s,h]{v_h}.
  $
  Invoking the inequality $x^2+\eta y^2-2\beta xy\ge\frac{\eta-\beta^2}{1+\eta}(x^2+y^2)$, valid for all $x,y\in\Real$, all $\beta>0$, and all $\eta>\beta^2$, with $x=\norm{\diff^{\frac12}\GRADh v_h}$, $y=\seminorm[s,h]{v_h}$, and $\beta=C_{\rm tr}N_\partial^{\frac12}$, we conclude recalling the definition \eqref{def:gamma.dg} of $\gamma$ that
  \begin{equation}\label{eq:coer.dg}
    \abilh(v_h,v_h)\ge \gamma \norm[\aXh]{v_h}^2.
  \end{equation}
  \medskip\\
  (ii) \emph{Primal consistency.}
  The definitions \eqref{var.cons} and \eqref{eq:source.dg} of the primal consistency error $\Cerr{u}{\cdot}$ and of the DG linear form show that, for all $v_h\in\aXh$,
  \begin{align*}
  \Cerr{u}{v_h}={}&
  -(\DIV(\diff\GRAD u),v_h)-\abilh(\lproj[h]{k} u,v_h)\\
  ={}&(\diff\GRAD u,\GRADh v_h)
  - \sum_{F\in\Fh}(\diff\GRAD u\SCAL\normal_F,\jump{v_h})_F-\abilh(\lproj[h]{k} u,v_h),
  \end{align*}
  where the second equality follows by integrating-by-parts in each element.
  Invoking the definition \eqref{eq:ah.dg} of $\abilh$ and using the fact that $\diff\GRAD u\SCAL\normal_F=\wavg{\diff\GRAD u}\SCAL\normal_F$ for all $F\in\Fh$ (since $\diff\GRAD u\SCAL \normal$ is continuous across the interfaces) we obtain
  \begin{equation}\label{eq:consistency.T1-T4}
    \begin{aligned}
      \Cerr{u}{v_h}
      ={}& (\diff\GRADh(u-\lproj[h]{k} u),\GRADh v_h)
      - \sum_{F\in\Fh}(\wavg{\diff\GRADh(u-\lproj[h]{k} u)}\SCAL\normal_F,\jump{v_h})_F
      \\
      & - \sum_{F\in\Fh}(\jump{\lproj[h]{k} u},\wavg{\diff\GRADh v_h}\SCAL\normal_F)_F
      + \eta\sum_{F\in\Fh}\frac{\sdiff[F]}{h_F}(\jump{\lproj[h]{k} u},\jump{v_h})_F
      \\
      \eqcolon{}&\term_1+\cdots+\term_4.
    \end{aligned}
  \end{equation}
  To estimate $\term_1$, we use the Cauchy--Schwarz inequality, the definition \eqref{def:Ih.dg} of the interpolant $\lproj[h]{k}$, the optimal approximation properties \eqref{eq:lproj:approx} of the $L^2$-projector with $\polydeg=k$, $m=1$, and $s=r+1$, and the definition \eqref{eq:norm.U.dg} of the norm $\norm[\aXh]{{\cdot}}$ to write
  \begin{equation}\label{eq:consistency.dg:T1}
    \begin{aligned}
      |\term_1|
      &\le
      \left(
      \sum_{T\in\Th}  \norm[T]{\diff^{\frac12}(\GRAD u-\GRAD\lproj[T]{k}u_{|T})}^2
      \right)^{\frac12}\left(
      \sum_{T\in\Th}\norm[T]{\diff^{\frac12}\GRAD v_T}^2
      \right)^{\frac12}
      \\
      &\lesssim
      \left(\sum_{T\in\Th} \overline{\lambda}_Th_T^{2r}\seminorm[H^{r+1}(T)]{u}^2\right)^{\frac12}\norm[\aXh]{v_h}.
    \end{aligned}
  \end{equation}
  Using the bound \eqref{eq:consistency.term.est} followed by the trace approximation properties \eqref{eq:lproj:approx.trace} of the $L^2$-projector with $\polydeg=k$, $m=1$, and $s=r+1$, we have for the second term
  \begin{equation}\label{eq:consistency.dg:T2}
    |\term_2|\le\left(
    \sum_{T\in\Th}h_T\norm[\partial T]{\diff[T]^{\frac12}\GRAD(u_{|T}-\lproj[T]{k}u_{|T})}^2
    \right)^{\frac12}\seminorm[s,h]{v_h}
    \lesssim\left(
    \sum_{T\in\Th}\overline{\lambda}_T h_T^{2r}\seminorm[H^{r+1}(T)]{u}^2
    \right)^{\frac12}\seminorm[s,h]{v_h}.
  \end{equation}
  The following estimate will be used to bound the remaining terms:
  For all $w\in H^1_0(\Omega)\cap H^{r+1}(\Th)$,
  \begin{align}
    \seminorm[s,h]{\lproj{k}w}
    &=\left(
    \sum_{F\in\Fh}\sdiff[F]h_F^{-1}\norm[F]{\jump{\lproj{k}w}}^2
    \right)^{\frac12}
    \nonumber\\
    &\lesssim\left(
    \sum_{F\in\Fh}\sum_{T\in\Th[F]}\overline{\lambda}_Th_T^{-1}
    \norm[F]{\lproj[T]{k}w_{|T}-w_{|T}}^2
    \right)^{\frac12}
    \nonumber\\
    &\lesssim\left(
    \sum_{T\in\Th}\overline{\lambda}_Th_T^{-1}\norm[\partial T]{\lproj[T]{k}w_{|T}-w_{|T}}^2    
    \right)^{\frac12}
    \lesssim\left(
    \sum_{T\in\Th}\overline{\lambda}_T h_T^{2r}\seminorm[H^{r+1}(T)]{w}^2
    \right)^{\frac12},
    \label{eq:consistency.dg.jumps}
  \end{align}
  where we have used the definition \eqref{eq:norm.U.dg} in the first line, inserted $\pm w$ inside the jump operator and used the triangle inequality together with the fact that $\sdiff[F]h_F^{-1}\lesssim\overline{\lambda}_Th_T^{-1}$ in the second line, used \eqref{eq:sum.swap} to pass to the third line, and invoked the optimal trace approximation properties \eqref{eq:lproj:approx.trace} of the $L^2$-projector with $\polydeg=k$, $m=0$, and $s=r+1$ to conclude.
  Notice that inserting $\pm w$ is possible in the second line since, by the assumed regularity, $\jump{w}=0$ across interfaces and $w=0$ on $\partial\Omega$.

  The bounds \eqref{eq:symmetry.term.est} and \eqref{eq:consistency.dg.jumps} with $w=u$ then yield
  \begin{equation}\label{eq:consistency.dg:T3}
    |\term_3| \le C_{\rm tr}N_\partial^2\norm{\diff^{\frac12}\GRADh v_h} \seminorm[s,h]{\lproj{k}u}
    \lesssim\norm[\aXh]{v_h}\left(
    \sum_{T\in\Th}\overline{\lambda}_Th_T^{2r}\seminorm[H^{r+1}(T)]{u}^2
    \right)^{\frac12}.
  \end{equation}
  The bound on $\term_4$ is obtained by invoking the Cauchy--Schwarz inequality and \eqref{eq:consistency.dg.jumps} with $w=u$:
  \begin{equation}\label{eq:consistency.dg:T4}
    |\term_4|
    \le\eta\seminorm[s,h]{\lproj{k}u}\seminorm[s,h]{v_h}
    \lesssim\eta\left(
    \sum_{T\in\Th}\overline{\lambda}_Th_T^{2r}\seminorm[H^{r+1}(T)]{u}^2
    \right)^{\frac12}\seminorm[s,h]{v_h}.
  \end{equation}
  Using \eqref{eq:consistency.dg:T1}, \eqref{eq:consistency.dg:T2}, \eqref{eq:consistency.dg:T3}, and \eqref{eq:consistency.dg:T4} in \eqref{eq:consistency.T1-T4}, and noticing that $\seminorm[s,h]{v_h}\le \norm[\aXh]{v_h}$, we obtain the following bound on the dual norm of $\Cerr{u}{\cdot}$:
  \begin{equation}\label{eq:primal.consistency.dg}
    \norm[\aXh^\star]{\Cerr{u}{\cdot}}\lesssim(1+\eta)\left(
    \sum_{T\in\Th}\overline{\lambda}_T h_T^{2r}\seminorm[H^{r+1}(T)]{u}^2
    \right)^{\frac12}.
  \end{equation}
  (iii) \emph{Conclusion.} 
  Using the coercivity property \eqref{eq:coer.dg} and the primal consistency estimate \eqref{eq:primal.consistency.dg}, the energy estimate \eqref{dg:est.energy} follows directly from Estimate \eqref{energy.est} in Theorem \ref{a:th.est.var}.
\end{proof}

\begin{remark}[Spatially varying diffusion field]\label{rem:dg.spatially.varying.K}
  The SWIP DG method, as well as the analysis above, can be extended to locally varying diffusion fields, at the price of a less favourable dependence of the constant in \eqref{dg:est.energy} on the latter.
  The modifications are briefly described hereafter.
  Assume $\diff\in L^\infty(\Omega)^{d\times d}\cap W^{1,\infty}(\Th)^{d\times d}$ symmetric and uniformly elliptic so that, in particular, there exists a symmetric and uniformly elliptic field $\diff^{-1}\in L^\infty(\Omega)^{d\times d}$ such that, denoting by $\matr{I}_d$ the identity matrix of $\Real^{d\times d}$, $\diff\diff^{-1}=\matr{I}_d$ almost everywhere in $\Omega$.
  For all $T\in\Th$, we let $\underline{\lambda}_T\coloneq1/\norm[L^\infty(T)^{d\times d}]{\diff^{-1}}$ and $\overline{\lambda}_T\coloneq\norm[L^\infty(T)^{d\times d}]{\diff}$.
  As before, we define for all $T\in\Th$ the quantity $\alpha_T\coloneq\overline{\lambda}_T/\underline{\lambda}_T$, which now represents a local anisotropy/heterogeneity ratio, and we set $\alpha\coloneq\max_{T\in\Th}\alpha_T$.
  Definition \eqref{eq:sdiff.TF} is modified setting $\sdiff[TF]\coloneq\norm[L^\infty(F)]{({\diff}_{|T}\normal_{TF})\SCAL\normal_{TF}}$.
  In estimate \eqref{eq:symmetry.term.est}, an additional factor $\alpha^{\frac12}$ appears in the right-hand side, so that \eqref{def:gamma.dg} is replaced by $\gamma\coloneq(\eta-\alpha C_{\rm tr}^2N_\partial)/(1+\eta)$, and $\eta$ must now be taken strictly larger than $\alpha C_{\rm tr}^2N_\partial$ for coercivity to hold.
  With these modifications, the proof of Theorem \ref{th:err.est.dg} carries out unchanged, but an additional dependence on the global anisotropy/heterogeneity ratio appears in the right-hand side of \eqref{dg:est.energy} as a result of the modification in \eqref{eq:symmetry.term.est}.
\end{remark}

\subsection{Improved error estimate in the $L^2$ norm}

\begin{theorem}[$L^2$ estimate for the DG method]\label{th:err.est.dg.L2}
  Under the assumptions and notations of Theorem \ref{th:err.est.dg}, and further assuming elliptic regularity, it holds that
  \begin{equation}\label{dg:est.L2}
    \norm{u_h-\lproj[h]{k} u}
    \lesssim \gamma^{-1}(1+\eta)^2 h^{r+1}\seminorm[H^{r+1}(\Th)]{u},
  \end{equation}
where the multiplicative constant additionally depends on $\diff$.
\end{theorem}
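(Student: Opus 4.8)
The plan is to apply Theorem~\ref{th:abstract.weak} with $\aL\coloneq L^2(\Omega)$, taking for $r_h:\aXh=\Poly{k}(\Th)\to L^2(\Omega)$ the canonical inclusion (so that $\alinh=\ell\circ r_h$, where $\ell(\cdot)=(f,\cdot)$) and choosing $\aJh z_g\coloneq\lproj[h]{k}z_g$. Elliptic regularity provides, for every $g$ in the unit ball $B$ of $L^2(\Omega)$, a dual solution $z_g\in H^2(\Omega)\cap H^1_0(\Omega)$ with $\norm[H^2(\Omega)]{z_g}\lesssim\norm{g}\le1$. With these choices, \eqref{a:weak.est} reads
\[
\norm{u_h-\lproj[h]{k}u}\le\norm[\aXh]{u_h-\lproj[h]{k}u}\,\sup_{g\in B}\norm[\aXh^\star]{\Cerrdual{z_g}{\cdot}}+\sup_{g\in B}\,\Cerr{u}{\lproj[h]{k}z_g}.
\]

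\emph{First term.} The energy estimate \eqref{dg:est.energy} gives $\norm[\aXh]{u_h-\lproj[h]{k}u}\lesssim\gamma^{-1}(1+\eta)h^r\seminorm[H^{r+1}(\Th)]{u}$, after absorbing $\overline{\lambda}_T$ into the ($\diff$-dependent) hidden constant. Since $\abilh$ is symmetric and $\alinh=\ell\circ r_h$, Section~\ref{sec:link.cons} shows that $\Cerrdual{z_g}{\cdot}$ coincides with $\Cerr{z_g}{\cdot}$, the \emph{primal} consistency error of the dual problem, which is exactly the object bounded (for an arbitrary source/solution pair) in the proof of Theorem~\ref{th:err.est.dg}. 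Applying \eqref{eq:primal.consistency.dg} with $z_g\in H^2(\Th)$ in place of $u$ and $r=1$ yields $\norm[\aXh^\star]{\Cerrdual{z_g}{\cdot}}\lesssim(1+\eta)h\,\norm[H^2(\Omega)]{z_g}\lesssim(1+\eta)h$, so the first term above is $\lesssim\gamma^{-1}(1+\eta)^2h^{r+1}\seminorm[H^{r+1}(\Th)]{u}$.

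\emph{Primal--dual consistency.} It remains to prove $\sup_{g\in B}|\Cerr{u}{\lproj[h]{k}z_g}|\lesssim(1+\eta)h^{r+1}\seminorm[H^{r+1}(\Th)]{u}$, with a $\diff$-dependent constant. I expand $\Cerr{u}{\lproj[h]{k}z_g}=\term_1+\cdots+\term_4$ through \eqref{eq:consistency.T1-T4} with $v_h=\lproj[h]{k}z_g$, relying on two structural facts: every jump of $\lproj[h]{k}u$ (resp.\ $\lproj[h]{k}z_g$) equals the jump of $u-\lproj[h]{k}u$ (resp.\ $z_g-\lproj[h]{k}z_g$), since $u,z_g\in H^1_0(\Omega)$; and the normal flux $\psi_F\coloneq\diff\GRAD z_g\SCAL\normal_F$ is single-valued across interior faces, because $\diff\GRAD z_g\in\Hdiv$ (as $z_g$ solves the dual problem with $g\in L^2(\Omega)$). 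In $\term_1$ I write $\GRADh\lproj[h]{k}z_g=\GRAD z_g+\GRADh(\lproj[h]{k}z_g-z_g)$; the summand $\GRADh(\lproj[h]{k}z_g-z_g)$ pairs by Cauchy--Schwarz with $\diff\GRADh(u-\lproj[h]{k}u)$ to contribute $\lesssim h^{r+1}$ via \eqref{eq:lproj:approx} (exponents $r$ and $1$), while the summand $\GRAD z_g$, after an element-wise integration by parts, leaves volume residuals $(u-\lproj[T]{k}u,\DIV(\diff[T]\GRAD z_g))_T$, bounded \emph{directly} by $\norm[T]{u-\lproj[T]{k}u}\,\overline{\lambda}_T\seminorm[H^2(T)]{z_g}\lesssim h_T^{r+1}\seminorm[H^{r+1}(T)]{u}\,\overline{\lambda}_T\seminorm[H^2(T)]{z_g}$ thanks to $\norm[T]{u-\lproj[T]{k}u}\lesssim h_T^{r+1}\seminorm[H^{r+1}(T)]{u}$ (the gained power of $h$ necessarily coming from the $u$-side, $z_g$ having no regularity beyond $H^2$), and boundary residuals that collapse --- by single-valuedness of $u$ and of $\psi_F$ --- to a multiple of $\sum_{F\in\Fh}(\jump{\lproj[h]{k}u-u},\psi_F)_F$. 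In $\term_3$ I likewise turn $\jump{\lproj[h]{k}u}$ into $\jump{\lproj[h]{k}u-u}$ and split $\wavg{\diff\GRADh\lproj[h]{k}z_g}\SCAL\normal_F=\psi_F+\wavg{\diff\GRADh(\lproj[h]{k}z_g-z_g)}\SCAL\normal_F$ (the weights summing to $1$); the $\psi_F$-part produces the opposite multiple of $\sum_{F\in\Fh}(\jump{\lproj[h]{k}u-u},\psi_F)_F$, so these two flux contributions cancel. All remaining pieces --- the rest of $\term_1$ and $\term_3$, and the whole of $\term_2$ and $\term_4$ --- are controlled by $\lesssim(1+\eta)h^{r+1}\seminorm[H^{r+1}(\Th)]{u}$ via the $\seminorm[s,h]{\cdot}$/Cauchy--Schwarz pairing \eqref{eq:consistency.term.est}, the jump estimate \eqref{eq:consistency.dg.jumps} applied to $z_g$ with $r=1$, and the optimal (trace) approximation properties \eqref{eq:lproj:approx}--\eqref{eq:lproj:approx.trace} of $\lproj[h]{k}$ applied to $u$ (exponent $r$) and to $z_g$ (exponent $1$).

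\emph{Conclusion and main difficulty.} Adding the two bounds and noting that $1+\eta\le\gamma^{-1}(1+\eta)^2$ over the admissible range of $\eta$ yields \eqref{dg:est.L2}. The step I expect to cost the most effort is the primal--dual term, and within it the ``flux cross term'' $\sum_{F\in\Fh}(\jump{\lproj[h]{k}u-u},\psi_F)_F$: this quantity is \emph{not} $\mathcal{O}(h^{r+1})$ by itself --- the naive $\seminorm[s,h]{\cdot}$-pairing only delivers $\mathcal{O}(h^{r})$, because $\norm[F]{\psi_F}$ scales like $h_F^{-1/2}$ against the penalty weight $\sdiff[F]h_F^{-1}$ --- so the estimate rests on its exact cancellation between $\term_1$ and $\term_3$ after integration by parts, as well as on absorbing the volume residual produced there through the $u$-interpolation error rather than through unavailable extra smoothness of $z_g$.
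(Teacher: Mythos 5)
Your proposal is correct and follows essentially the same route as the paper's proof: the same choices of $r_h$ (canonical inclusion) and $\aJh z_g=\lproj[h]{k}z_g$ in Theorem \ref{th:abstract.weak}, the dual consistency bound via symmetry and \eqref{eq:primal.consistency.dg} with $r=1$, the treatment of $\term_2,\term_4$ through \eqref{eq:consistency.dg.jumps} applied to $z_g$, and, crucially, the same mechanism for $\term_1+\term_3$ (integration by parts of the $\GRAD z_g$ part against $-\DIV(\diff\GRAD z_g)=g$, followed by the cancellation of the single-valued flux cross term so that only $\wavg{\diff(\GRAD z_g-\GRADh\lproj[h]{k}z_g)}$ survives on the faces). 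Your closing diagnosis — that the flux cross term is only $\mathcal{O}(h^{r})$ on its own and that the whole estimate hinges on its exact cancellation between $\term_1$ and $\term_3$ — matches precisely what the paper's computation exploits.
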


\begin{remark}[$L^2$-estimate for spatially varying diffusion field]\label{rem:dg.L2}
    As for VEM, and following the discussion in Remark \ref{rem:vem.L2}, we do not attempt to track the dependence on the diffusion field for the multiplicative constant in the right-hand side of \eqref{dg:est.L2}.
    Notice, however, that, in view of Remark \ref{rem:dg.spatially.varying.K}, the error estimate \eqref{dg:est.L2} can this time be extended to spatially varying diffusion fields.
\end{remark}
\begin{proof}
  (i) \emph{Dual consistency.}
Estimate \eqref{eq:primal.consistency.dg} also gives an estimate on the dual consistency error. Indeed, the operator $r_h:\aXh\to L^2(\Omega)$ here is the natural embedding of $\aXh=\Poly{k}(\Th)$ into $L^2(\Omega)$. Hence, inspecting the definition \eqref{var.cons.dual} of the dual consistency error and using the symmetry of $\abilh$ yields $\Cerrdual{z_g}{\cdot}=\Cerr{z_g}{\cdot}$. Applying \eqref{eq:primal.consistency.dg} with $r=1$ and using the elliptic regularity therefore gives, for any $g\in L^2(\Omega)$,
\begin{equation}\label{est:dual.cons.dg}
  \norm[\aXh^\star]{\Cerrdual{z_g}{\cdot}}\lesssim \overline{\lambda}_\Omega(1+\eta) h\seminorm[H^{2}(\Omega)]{z_g},
\end{equation}
where $\overline{\lambda}_\Omega=\max_{T\in\Th}\overline{\lambda}_T$.
\medskip\\
  (ii) \emph{Primal-dual consistency.}
For $g\in L^2(\Omega)$, we want to estimate $\Cerr{u}{\lproj[h]{k}z_g}$, which is given by \eqref{eq:consistency.T1-T4} with $v_h=\lproj[h]{k}z_g$. Note that, by the assumed elliptic regularity,
$z_g\in H^2(\Omega)$ and $\norm[H^2(\Omega)]{z_g}\lesssim \norm{g}$.
Considering \eqref{eq:consistency.dg:T2} and \eqref{eq:consistency.dg:T4} with $v_h=\lproj[h]{k}z_g$
and using \eqref{eq:consistency.dg.jumps} with $w=z_g$ and $r=1$, we see that
\begin{equation}\label{eq:pd.consistency.1}
|\term_2|+|\term_4|\lesssim (1+\eta) \left(\overline{\lambda}_\Omega^{\frac12} h^r\seminorm[H^{r+1}(\Th)]{u} \right)\left(\overline{\lambda}_\Omega^{\frac12}h\seminorm[H^2(\Omega)]{z_g}\right).
\end{equation}
To estimate $\term_1+\term_3$, we first manipulate $\term_1$ as follows:
\begin{align*}
\term_1={}&(\diff\GRADh(u-\lproj[h]{k} u),\GRADh \lproj[h]{k}z_g)\nonumber\\
	={}& (\diff\GRADh(u-\lproj[h]{k} u),\GRADh \lproj[h]{k}z_g-\GRAD z_g) +(\diff\GRADh(u-\lproj[h]{k} u),\GRAD z_g) \\
	={}& (\diff\GRADh(u-\lproj[h]{k} u),\GRADh \lproj[h]{k}z_g-\GRAD z_g) + \sum_{F\in\Fh}(\jump{u-\lproj[h]{k} u},\diff\GRAD z_g\SCAL\normal_F)_F
+(u-\lproj[h]{k}u,g),
\end{align*}
where we passed to the second line by introducing $\pm\GRAD z_g$ and to the third line by performing a cell-wise integration-by-parts and recalling that $-\DIV (\diff\GRAD z_g)=g$. Note that fluxes $\diff\GRAD z_g\SCAL\normal_F$ are continuous across $F$, and thus $\diff\GRAD z_g\SCAL\normal_F=\wavg{\diff\GRAD z_g}\SCAL\normal_F$. Recalling the definition of $\term_3$ and noticing that, since $u\in H^1_0(\Omega)$, $\jump{u}=0$ for all $F\in\Fh$, this yields
\begin{align*}
\term_1+\term_3	={}& (\diff\GRADh(u-\lproj[h]{k} u),\GRADh \lproj[h]{k}z_g-\GRAD z_g) \\
&+ \sum_{F\in\Fh}(\jump{u-\lproj[h]{k} u},\wavg{\diff(\GRAD z_g-\GRADh \lproj[h]{k}z_g)}\SCAL\normal_F)_F
+(u-\lproj[h]{k}u,g).
\end{align*}
The Cauchy--Schwarz inequality and the optimal approximation properties of $\lproj[h]{k}$ that descend from \eqref{eq:lproj:approx}--\eqref{eq:lproj:approx.trace} yield
\begin{align*}
|(\diff\GRADh(u-\lproj[h]{k} u){}&,\GRADh \lproj[h]{k}z_g-\GRAD z_g)|+|(u-\lproj[h]{k}u,g)|\\
\le{}& \overline{\lambda}_\Omega \norm{\GRADh(u-\lproj[h]{k} u)}
\norm{\GRADh(z_g-\lproj[h]{k}z_g)}+\norm{u-\lproj[h]{k}u}\norm{g}\\
\lesssim{}& \overline{\lambda}_\Omega h^{r}\seminorm[H^{r+1}(\Th)]{u} h
\seminorm[H^2(\Omega)]{z_g}+h^{r+1}\seminorm[H^{r+1}(\Th)]{u}\norm{g}.
\end{align*}
For a given $F\in\Fh$, splitting the jump and average into their values in the cells neighbouring $F$ and using the trace approximation properties \eqref{eq:lproj:approx.trace} with $\polydeg=k$, $m=0$, and $s=r+1$ for the first factor, and with $\polydeg=k$, $m=1$, and $s=2$ for the second factor, we have
\begin{align*}
\Big|(\jump{u-\lproj[h]{k} u},{}&\wavg{\diff(\GRAD z_g-\GRADh \lproj[h]{k}z_g)}\SCAL\normal_F)_F\Big|\\
\le{}& \left(\sum_{T\in\Th[F]} \norm[F]{u_{|T}-\lproj[T]{k}u_{|T}}\right)
\left(\sum_{T\in\Th[F]} \overline{\lambda}_{T}\norm[F]{\GRAD (z_g)_{|T}-\GRAD\lproj[T]{k}(z_g)_{|T}}\right)\\
\lesssim{}& \left(\sum_{T\in\Th[F]} h_{T}^{r+\frac12}\seminorm[H^{r+1}(T)]{u}\right)
\left(\sum_{T\in\Th[F]} \overline{\lambda}_{T} h_{T}^{\frac12}\seminorm[H^2(T)]{z_g}\right).
\end{align*}
Summing over $F\in\Fh[T]$ and using, thanks to the Cauchy--Schwarz inequality,
the estimate (for positive terms $\bullet$ and $\star$)
\begin{align*}
\sum_{F\in\Fh}\Bigg(\sum_{T\in\Th[F]}\bullet\Bigg)\Bigg(\sum_{T\in\Th[F]}\star\Bigg)
\le{}&
\left(\sum_{F\in\Fh}\Bigg(\sum_{T\in\Th[F]}\bullet\Bigg)^2\right)^{\frac12}
\left(\sum_{F\in\Fh}\Bigg(\sum_{T\in\Th[F]}\star\Bigg)^2\right)^{\frac12}\\
\le{}&
2\left(\sum_{F\in\Fh}\sum_{T\in\Th[F]}\bullet^2\right)^{\frac12}
\left(\sum_{F\in\Fh}\sum_{T\in\Th[F]}\star^2\right)^{\frac12}
\end{align*}
(where the second inequality follows from ${\rm Card}(\Th[F])\le 2$ for all $F\in\Fh$),
as well as \eqref{eq:sum.swap}, we infer that
\[
\left|\sum_{F\in\Fh}(\jump{u-\lproj[h]{k} u},\wavg{\diff(\GRAD z_g-\GRADh \lproj[h]{k}z_g)}\SCAL\normal_F)_F\right|\lesssim h^{r+\frac12}\seminorm[H^{r+1}(\Th)]{u}\overline{\lambda}_\Omega
h^{\frac12}\seminorm[H^2(\Omega)]{z_g}.
\]
The previous estimates show that
\begin{equation}\label{eq:pd.consistency.2}
|\term_1+\term_3|\lesssim \overline{\lambda}_\Omega h^{r+1}\seminorm[H^{r+1}(\Th)]{u}\seminorm[H^2(\Omega)]{z_g}+ h^{r+1}\seminorm[H^{r+1}(\Th)]{u}\norm{g}.
\end{equation}
Gathering \eqref{eq:pd.consistency.1} and \eqref{eq:pd.consistency.2} and using the elliptic
regularity shows that
\begin{equation}\label{eq:pd.consistency.final}
	\Cerr{u}{\lproj[h]{k}z_g}\lesssim (1+\eta) h^{r+1}
\seminorm[H^{r+1}(\Th)]{u}\norm{g}.
\end{equation}
(iii) \emph{Conclusion.} 
Recalling that $r_h$ is here the natural embedding of $\Poly{k}(\Th)$ into $L^2(\Omega)$,
the $L^2$-estimate \eqref{dg:est.L2} follows from Theorem \ref{th:abstract.weak}, using
\eqref{dg:est.energy}, \eqref{est:dual.cons.dg} and \eqref{eq:pd.consistency.final}.
\end{proof}

\end{taggedblock}


\begin{footnotesize}
  \bibliographystyle{plain}
  \bibliography{strang}
\end{footnotesize}

\end{document}